\newcounter{braid}
\newcounter{strands}
\def\cross{%
  \@ifnextchar^{\message{Got sup}\cross@sup}{\cross@sub}}
\def\cross@sup^#1_#2{\render@cross{#2}{#1}}
\def\cross@sub_#1{\@ifnextchar^{\cross@@sub{#1}}{\render@cross{#1}{1}}}
\def\cross@@sub#1^#2{\render@cross{#1}{#2}}
\def\render@cross#1#2{
  \def\strand{#1}
  \def\crossing{#2}
  \pgfmathsetmacro{\cross@y}{-\value{braid}*\braid@h}
  \pgfmathtruncatemacro{\nextstrand}{#1+1}
  \foreach \thread in {1,...,\value{strands}}
  {
    \pgfmathsetmacro{\strand@x}{\thread * \braid@w}
    \ifnum\thread=\strand
    \pgfmathsetmacro{\over@x}{\strand * \braid@w + .5*(1 - \crossing) * \braid@w}
    \pgfmathsetmacro{\under@x}{\strand * \braid@w + .5*(1 + \crossing) * \braid@w}
    \draw[braid] \pgfkeysvalueof{/tikz/braid start} +(\under@x pt,\cross@y pt) to[out=-90,in=90] +(\over@x pt,\cross@y pt -\braid@h);
    \draw[braid] \pgfkeysvalueof{/tikz/braid start} +(\over@x pt,\cross@y pt) to[out=-90,in=90] +(\under@x pt,\cross@y pt -\braid@h);
    \else
    \ifnum\thread=\nextstrand
    \else
     \draw[braid] \pgfkeysvalueof{/tikz/braid start} ++(\strand@x pt,\cross@y pt) -- ++(0,-\braid@h);
    \fi
   \fi
  }
  \stepcounter{braid}
}
\tikzset{braid/.style={double=\pgfkeysvalueof{/tikz/braid colour},double distance=1pt,line width=2pt,white}}
\newcommand{\braid}[2][]{%
  \begingroup
  \pgfkeys{/tikz/strands=2}
  \tikzset{#1}
  \pgfkeysgetvalue{/tikz/braid width}{\braid@w}
  \pgfkeysgetvalue{/tikz/braid height}{\braid@h}
  \setcounter{braid}{0}
  \let\sigma=\cross
  #2
  \endgroup
}
\newtheorem{theorem}{Theorem}
\newtheorem{proposition}[theorem]{Proposition}
\newtheorem{lemma}[theorem]{Lemma}
\newtheorem{corollary}[theorem]{Corollary}
\numberwithin{equation}{section}
\def\Proof{\medskip\noindent{\bf Proof: }}
\def\Z{\mathbb{Z}}
\def\Q{\mathbb{Q}}
\def\R{\mathbb{R}}
\def\F{\mathbb{F}}
\def\Pi{\mathbb{P}^{\infty}}
\def\md{\mathcal{D}}
\def\qed{\hfill$\square$\medskip}
\def\Zpk{\mathbb{Z}/p^{k}}
\def\Zpk1{\mathbb{Z}/p^{k-1}}
\newcommand{\rref}[1]{(\ref{#1})}
\newcommand{\fracd}[2]{\frac{\displaystyle #1}{\displaystyle #2}}
\newcommand{\beg}[2]{\begin{equation}\label{#1}#2\end{equation}}
\def\r{\rightarrow}
\def\mc{\mathcal{C}}
\def\F{\mathbb{F}}
\def\sl2{\widetilde{SL_{2}(\Z)}}
\def\smin{\smallsetminus}
\def\md{\mathcal{D}}
\def\rank{\operatorname{rank}}
\title[Spanning tree cohomology]{A spanning tree cohomology theory for links}
\author{Daniel Kriz, Igor Kriz}
\thanks{The first author was supported by the Princeton Summer Research Program. 
The second author was supported by NSF grant DMS 1102614}
\subjclass[2010]{57M25, 57M27, 57R58}
\begin{document}

\maketitle

\begin{abstract}
In their recent preprint, Baldwin, Ozsv\'{a}th and Szab\'{o} defined a twisted 
version (with coefficients in a Novikov ring) of a spectral sequence, 
previously defined by Ozsv\'{a}th and Szab\'{o}, from Khovanov homology to 
Heegaard-Floer homology of the branched double cover along a link. In their preprint, 
they give a combinatorial interpretation of the 
$E_3$-term of their spectral sequence. The main purpose of the present 
paper is to prove directly that this $E_3$-term is a link invariant. 
We also give some concrete examples of computation of the invariant.
\end{abstract}

\section{Introduction}

\vspace{3mm}

The last decade or so has been a fruitful time for invention of
a new generation of knot invariants. This
includes Khovanov homology \cite{khovanov, dbar}, which is a 
sequence of homology groups whose Euler characteristic
is the Jones polynomial, and knot Floer homology of Ozsv\'{a}th and Szab\'{o}
\cite{ost, ost2, ost3, mos}, which is similarly related to the Alexander 
polynomial. In \cite{os2}, Ozsv\'{a}th and Szab\'{o} considered yet
another link invariant, namely the Heegaard-Floer homology of the 
branched double cover of $S^3$ along $L$, and discovered a spectral sequence
from Khovanov homology to $\widehat{HF}(\Sigma(L))$. Baldwin \cite{baldwin}
proved that every $E^r$-term of this spectral sequence is a link invariant.

\vspace{3mm}

In a still more recent paper \cite{os} (which is to appear soon), Baldwin, Ozsv\'{a}th and Szab\'{o} 
introduced
a variant, namely perturbed Heegaard Floer homology with coefficients in a ``Novikov
ring''. They also constructed a spectral sequence analogous to \cite{os}
in this new setting. Curiously, the behavior of this modified construction
is in a way quite distinct from \cite{os}. Instead of the $E_2$-term
being Khovanov homology, $d_1$ is, in fact, trivial, and the cochain complex
$(E_2,d_2)$ has a combinatorial description given in \cite{os}. 
In fact, basis elements of $E_2$ can be identified with Kauffman states
for the Alexander polynomial \cite{kauf}, the set of which is considerably
smaller than the basis of the chain complex calculating Khovanov homology.

\vspace{3mm}

The main purpose of this paper is to show that the $E_3$-term of the
spectral sequence mentioned in the last paragraph, which we call
BOS cohomology (BOS stands for Baldwin-Ozsv\'{a}th-Szab\'{o}), is an invariant of oriented links. 
This was conjectured by John Baldwin. It is
proved in \cite{os} that the next possible differential in this spectral
sequence is $d_6$. It is therefore natural to ask if the spectral sequence
collapses. This is not known at present. Even if the spectral sequence
collapses, the $E_3$ term is a new invariant, since it is graded, while
the spectral sequence is, at least a priori, not. 

\vspace{3mm}

We aim for the present paper to be entirely self-contained. In fact,
we use no Floer homology techniques; our methods are entirely
algebraic. We
define all the concepts we are using in Section \ref{snot} below,
and state our main result precisely.
We also prove from first principles that the Baldwin-Ozsv\'{a}th-Szab\'{o} $d_2$-differential
satisfies $dd=0$, without referring to the spectral sequence. In Section
\ref{efund}, we prove a fundamental lemma which allows us to vary
the field of coefficients. This is a key step in proving invariance under
the Reidemeister moves, which is proved in Sections \ref{sr2}, \ref{sr3}.
Ultimately, the main tool used in those proofs are algebraic identities
involving M\"{o}bius transformations over fields of characteristic $2$.

\vspace{3mm}

\noindent
{\bf Acknowledgement:} The authors are indebted to John Baldwin 
and Zolt\'{a}n Szab\'{o} for 
sharing their preprint \cite{os} with us, and for helpful discussions.

\vspace{3mm}

\section{Preliminaries, and statement of the main result}
\label{snot}

Consider an oriented link $L$ in $S^3$ with generic projection $\md$. Throughout this
paper, we will use the following
assumption:
\beg{lass}{\parbox{3.5in}{Every connected component of $S^2\smin \md$ is simply
connected.}
\tag{A}}
Following \cite{os}, we denote by $(C(\md),\Psi)$ the cochain complex which is the 
$E_2$-term of the spectral sequence \cite{os} converging to the Heegaard-Floer twisted
homology $\widehat{HF}(\Sigma(L))$ where $\Sigma(L)$ is the branched double
cover of $S^3$ along the link $L$. In particular, with $\md$, there is associated
a planar {\em black graph} $B(\md)$, and a dual planar 
{\em white graph} $W(\md)$. $C(\md)$ is the $\Lambda$-module
on the basis $K(\md)$, which is the set of all {\em Kauffman states},
which are the spanning trees of $B(\md)$. 
We color the connected components of $S^2\smin \md$ (called {\em faces})
black and white
so that a black and white face never share an edge. The vertices of $B(\md)$
consist of faces colored black, and edges go through crossings of $\md$.
The white graph is defined dually where the vertices are the faces which are colored
white. Note that two vertices of the graph $B(\md)$ may have connected by multiple
edges and loops are also possible (similarly for the graph $W(\md)$). Because of
this, technically, $G=B(\md), W(\md)$ must be defined as $1$-dimensional
CW complexes, i.e. there are sets of {\em vertices} $V(G)$ and {\em edges}
$E(G)$ and source and target maps $S,T:E(G)\r V(G)$. However, when it is clear which
edge connecting two vertices $x,y\in V(G)$ we have in mind, we will also abuse notation
to write $\{x,y\}\in E(G)$. Note that there is a canonical bijection
\beg{etau}{\tau:E(B(\md))\r E(W(\md))
}
sending a black edge to the white edge passing over the same crossing of $\md$.
We may additionally speak of {\em orientations related by $\tau$} when the
white edge orientation is obtained by rotating the black edge orientation by
$90^\circ$ degrees counter-clockwise. Note also that for a black spanning tree $T$,
there is a unique dual white spanning tree $\tau(T)$ which contains precisely
the edges $\tau(e)$ where $e\notin E(T)$. We also note that $B(\md)$ and $W(\md)$
are planar graphs; by the Assumption \rref{lass}, together with their faces, these
graphs specify ``Poincar\'{e}-dual'' CW-decompositions of $S^2$,
which will be denoted by $B(S^2)$, $W(S^2)$, respectively.

\vspace{3mm}
To each edge $e$ of $B(\md)$ there
is now assigned a {\em height} $h(e)\in\{0,1\}$ which depends on
the direction $e$ crosses the crossing of $\md$. The convention is arbitrary,
but must be fixed. Actually, more precisely, there is another
convention which must be fixed, namely {\em positive and negative
crossings}, and both conventions must be
related appropriately. Use an isotopic deformation, if necessary, to make
the arcs cross at a $90^\circ$ angle. Let us then say that a crossing is
{\em positive} when the upper arc of the crossing
is oriented in the direction 
$90^\circ$ clockwise from the orientation
of the bottom edge. In the other case, we speak
of a {\em negative} crossing (see Figure 1).
Let the number of positive
resp. negative crossings of the projection $\mathcal{D}$ be
$n_+$ resp. $n_-$. To define the height of the black edge, draw the black
graph so that an edge passes the corresponding crossing at precisely a $45^\circ$
angle.
Now the height of a black edge $e$ passing through a crossing
is $0$ if the upper
arc of the crossing is $45^\circ$ counter-clockwise from the edge $e$ and
$1$ otherwise
(this is independent of orientation; see Figure 2). 

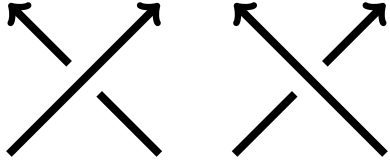
\begin{figure}
\begin{tikzpicture}[line width=3pt]
\draw[->] (-3,0) -- (-1,2);
\draw[->] (-2.2,1.2) -- (-3,2);
\draw (-1.8,.8) -- (-1,0);

\draw (0,0) -- (.8,.8);
\draw[->] (1.2,1.2) -- (2,2);
\draw[->]  (2,0) -- (0,2);
\end{tikzpicture}
\caption{A positive crossing and a negative crossing}
\end{figure}

\begin{figure}
\label{fheight}
\begin{tikzpicture}[line width=3pt]
\draw (-4,2) -- (-3.2,2.8);
\draw (-4,4) -- (-2,2);
\draw (-2.8,3.2) -- (-2,4);

\draw (1,2) -- (1.8,2.8);
\draw (2.2,3.2) -- (3,4);
\draw (1,4) -- (3,2);

\draw[line width=1.5pt] (-3,4.25) -- (-3,1.75);
\draw[line width=1.5pt] (.75,3) -- (3.25,3);

\draw (-2.95,3.75) node[anchor=east] {$e$};
\draw (1.25, 2.9) node[anchor=south] {$e$};

\filldraw[black]
 (-3,4.25) circle (2pt)
(-3,1.75) circle (2pt)
(.75,3) circle (2pt)
(3.25,3) circle (2pt);

\draw (-3,1) node{$h(e)=0$};
\draw (2,1) node{$h(e)=1$};

\end{tikzpicture}

\caption{The height of an edge through a crossing}
\end{figure}
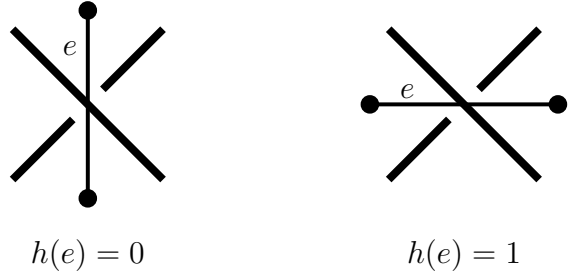

We may make different conventions regarding
heights of white edges. It is perhaps most natural to set
\beg{etauh}{h(\tau(e))=1-h(e).
}
(That way, in the definition of a chain complex below, 
if we swap faces colored white and black, we will obtain manifestly
isomorphic cochain complexes.) For a spanning tree $T$ of $B(\md)$, we set
\beg{etreeh}{h(T)=\sum_{e\in E(T)} h(e) + \sum_{e\notin E(T)} (1-h(e)).}

\vspace{3mm}
Now consider the {\em Novikov field} $\Lambda$, by which we mean
the set of
elements of the form
$$\sum_{r\in \R} a_r T^r,$$
where for each $N\in\R$ there are only finitely many $r$ with $0\neq a_r\in\Z/2$.

We construct a cochain complex whose summand in degree
$d$ is the free $\Lambda$-module
(where $\Lambda$ is a field specified below) on all spanning trees of
height $h=2d+n_-$. In other words, 
\beg{edegree}{d=\frac{1}{2}(h-n_-),}
and we notice that $\in \frac{1}{2}\Z$.
It is not difficult to see, however, that for a given projection $\mathcal{D}$,
all degrees which can occur differ by integers, or, in other words, 
heights of any two spanning trees $T,T^\prime$ differ by even numbers (this is shown
by induction on the number of edges in $E(T)\smin E(T^\prime)$.
The differential $\Psi$ additionally depends on {\em weights} which are
$\Z$-linearly independent (except as explicitly specified below) real numbers
$w(e)$ assigned to each oriented black edge $e$. Reversing orientation of
an edge has the effect of reversing the sign of $w(e)$. We set
\beg{etauw}{w(\tau(e))=w(e).}

\vspace{3mm}
To define $\Psi$, we also choose a {\em base point} which is an arc of $\md$. Then
there is precisely one adjacent black vertex and one adjacent white vertex
which are called the {\em black base point} and {\em white base point}.
Now let $T\in K(\md)$ and $T^\prime\in K(\md)$ where 
there exist black edges $e,f\in E(B(\md))$ with $h(e)=0$, $h(f)=1$,
$$E(T^\prime)=(E(T)\smin \{e\})\cup \{f\}.$$
(Note that $h(T^\prime)=h(T)+2$.)
Consider then the unique black circuit $c$ specified by 
the edges of $E(T)\cup E(T^\prime)$. We orient the circuit consistently 
(clockwise or counterclockwise) so that $f$ is
oriented from the connected component $C$ of
$T\cap T^\prime$ not containing the base point to the
connected component $C^\prime$ containing the base point.
Then let $A(T,T^\prime)$ be the sum of the weights of the edges
of the circuit $c$, oriented as specified above. We obtain another number
$B(T,T^\prime)$ as the sum of the weights of all black edges from
a vertex of $C$ to a vertex of $C^\prime$. Then define
\beg{epsi}{\Psi(T)=\sum_{T^\prime} \left(\frac{1}{1+T^{A(T,T^\prime)}}+
\frac{1}{1+T^{B(T,T^\prime)}}\right)T^\prime.
}

\vspace{3mm}
Note again that $\Psi$ raises $h$ by $2$.

\vspace{3mm}
\noindent
{\bf Comment:} It is worth mentioning that the system of weights is really a real-valued
cellular $1$-cochain on $B(\mathcal{D})$, which induces a cellular $1$-cochain on $W(\mathcal{D})$
via \rref{etauw}. (Note that, of course, these are automatically $1$-cocycles, since $B(\mathcal{D})$,
$W(\mathcal{D})$ are $1$-dimensional.)
Now the linear independence condition assures that the map $w:H_1(B(\mathcal{D}),\Z)\r
\R$ is injective. Note that this makes $A(T,T^\prime)$ and $B(T,T^\prime)$
evaluations of the cocycles $w$ on non-zero homology classes, thus 
showing in particular that the denominators of \rref{epsi} are non-zero. It is worth noting that
in the next section, we shall prove a ``fundamental lemma'' (Lemma \ref{l1}
below) which will show that the induced map $H_1(B(\mathcal{D}),\Z)\oplus H_1(W(\mathcal{D}),\Z)
\r \R$ is also injective. 

\vspace{3mm}
\begin{lemma}
\label{lpsipsi}
We have
\beg{epsipsi}{\Psi\circ\Psi=0.} 
\end{lemma}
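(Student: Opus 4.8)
The plan is to fix two Kauffman states $T$ and $T''$ and show directly that the coefficient of $T''$ in $\Psi(\Psi(T))$ vanishes in $\Lambda$, using that $\Lambda$ has characteristic $2$. First, this coefficient is automatically $0$ unless $E(T'')$ arises from $E(T)$ by deleting two height-$0$ black edges $e_1,e_2$ and inserting two height-$1$ black edges $f_1,f_2$ (necessarily four distinct edges, whence $h(T'')=h(T)+4$); so assume this and put $H:=E(T)\cup\{f_1,f_2\}$, a connected subgraph of $B(\md)$ of first Betti number $2$. Let $D_1$ (resp.\ $D_2$) be the unique circuit of $H$ through $f_1$ avoiding $f_2$ (resp.\ through $f_2$ avoiding $f_1$), so that $\{D_1,D_2\}$ is an $\F_2$-basis of $H_1(H;\F_2)$. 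One checks readily that the only intermediate states that can occur in a length-two path $T\to T'\to T''$ are the four graphs $T'_{ij}:=(E(T)\smin e_i)\cup f_j$, that $T'_{ij}$ is a spanning tree --- hence actually contributes --- precisely when $e_i\in D_j$, and that the path through $T'_{ij}$ performs the two ``moves'' ``delete $e_i$, insert $f_j$'' and ``delete $e_{i'}$, insert $f_{j'}$'' in that order (with $i'\neq i$, $j'\neq j$); in particular $T'_{ij}$ and $T'_{i'j'}$ perform the same two moves in the two possible orders.

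The combinatorial heart is to pin down which of the four $T'_{ij}$ contribute. Since $T''=H\smin\{e_1,e_2\}$ is acyclic, each of $D_1$, $D_2$, $D_1+D_2$ must meet $\{e_1,e_2\}$ (when $D_1\cap D_2=\emptyset$, i.e.\ the dumbbell case, the third condition is automatic). Substituting into the criterion ``$e_i\in D_j$'', a short case analysis shows that the set of contributing $T'_{ij}$ is either (i) a single ``opposite-order pair'' $\{T'_{ij},T'_{i'j'}\}$, or (ii) exactly three states --- one opposite-order pair together with one extra state --- and that the counts $0$, $1$ and $4$ cannot occur.

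In case (i) I expect the two contributions to be literally equal, so that they cancel in characteristic $2$. Concretely, under the case-(i) hypotheses a move ``delete $e$, insert $f$'' gets the \emph{same} coefficient $\tfrac{1}{1+T^{A}}+\tfrac{1}{1+T^{B}}$ whether performed from $T$ or from the other intermediate tree: the relevant black circuit is, in either occurrence, the unique circuit of $H$ missing the edge swapped out of $E(T)$, hence the same circuit; and the vertex partition defining $B$ is unchanged, because the only edge whose (re)insertion could alter it has both endpoints on one side, its fundamental circuit avoiding the edge that was cut. One must also check that the base-point rule orients these circuits and cuts identically in the two occurrences, which it does for the same reason.

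Case (ii) is where the real work lies and which I expect to be the main obstacle: the three contributing paths share no common step, and indeed the purely ``$A$-part'' of the three products does \emph{not} sum to zero by itself, so the cancellation must genuinely interleave the $A$- and $B$-contributions. To treat it I would write every occurring $A$-value as $w$ evaluated on one of $D_1$, $D_2$, $D_1+D_2$ (orientation fixed by the base-point rule), and every $B$-value as $w$ evaluated on the white circuit planar-dual to the corresponding black cut; by injectivity of $w$ on $H_1(B(\md);\Z)$ and on $H_1(W(\md);\Z)$ (as noted in the Comment above, and Lemma \ref{l1}), the resulting exponents may be treated as formal indeterminates subject only to $w(D_1+D_2)=\pm w(D_1)\pm w(D_2)$ and its white analogue. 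Vanishing of the total coefficient then reduces to a rational-function identity over $\F_2$ in a few variables, provable by repeated use of the characteristic-$2$ identity $\tfrac{1}{1+z}+\tfrac{1}{1+z^{-1}}=1$. The genuine difficulties, which I expect to occupy most of the proof, are the orientation bookkeeping needed to justify those $\pm$ signs coherently across the two or three paths, and the fact that the cuts entering the $B$-terms include black edges of $B(\md)$ lying \emph{outside} $H$: one has to show these ``global'' edges enter the paired contributions identically, and the extra contribution in exactly the pattern needed for the $\F_2$-identity to close.
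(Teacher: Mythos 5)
Your structural analysis is sound and closely parallels the paper's: the paper also reduces to a local computation around the edges $e_1,e_2,f_1,f_2$ (by contracting each component of $T$ minus the two height-$0$ edges to a point, which is a cleaner way of disposing of the ``global'' edges you worry about in your last paragraph), and also finds exactly two cases to check --- one where the cancellation is an easy $ab+ba=0$ in characteristic $2$, matching your case (i), and one genuinely non-trivial case with three intermediate trees, matching your case (ii). Your identification of the contributing intermediates via $e_i\in D_j$, and the counting argument showing the number of contributing paths is $2$ or $3$, are correct.

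The gap is that you never actually prove case (ii). You write ``I expect,'' ``the genuine difficulties... are,'' etc., and propose to reduce to ``a rational-function identity over $\F_2$ in a few variables, provable by repeated use of $\tfrac{1}{1+z}+\tfrac{1}{1+z^{-1}}=1$,'' but you neither write down that identity nor verify it. In the paper that identity is the concrete three-term sum of products displayed as \rref{euxyv} (with face variables $u,v$ and vertex variables $x,y$), and its verification is not a one-line consequence of the single characteristic-$2$ identity you cite; it requires a short but nontrivial chain of manipulations. Since this is the entire mathematical content of the hard case, the proposal as written is an outline rather than a proof. Two concrete things would close the gap: (a) justify the reduction to a minimal local model explicitly (the paper's contraction argument does this cleanly, whereas your plan to keep the full graph and treat $A,B$-values as formal indeterminates subject to $w(D_1+D_2)=\pm w(D_1)\pm w(D_2)$ needs more care with the $B$-terms, which involve vertex partitions and thus global data); and (b) actually display and verify the resulting identity in case (ii).
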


We will prove this at the end of this section after some
re-statements. Nevertheless, it may be difficult to guess
the formula \rref{epsi} directly. Baldwin, Ozsv\'{a}th and Szab\'{o} \cite{os}
obtained the complex $(C(\md),\Psi)$ as the $E_2$-term of a spectral sequence
calculating twisted Heegaard-Floer homology of the branched double cover $\Sigma(L)$ of
$S^3$ along the link $L$, which implies \rref{epsipsi}.

\vspace{3mm}
It is worth noting that in the definition of the differential $\Psi$, black and white
do not play a symmetrical role: if we interpret $B(T,T^\prime)$ as the
sum of weights of white edges on a consistently oriented white circuit $w$, then the orientation
of $w$ does not depend on the choice of edges $e,f$, as long as they cross two edges of $w$ of
the required heights. On the other hand, the orientation of the black circuit $c$ discussed
above clearly can depend on the choice of the edges $e,f$ in it. 

\vspace{3mm}
Nevertheless, it turns out that we have the following

\begin{proposition}
\label{ppsym}
The value of $\Psi$ is symmetrical in black
and white, and is independent of the choice of base points. 
\end{proposition}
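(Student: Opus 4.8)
The plan is to establish the two asserted properties one at a time, and to leverage the ``fundamental lemma'' (Lemma~\ref{l1}) so that we may freely compute in whichever graph is convenient. First I would make precise what it means for $\Psi$ to be ``symmetrical in black and white''. Using the canonical bijection $\tau$ of \rref{etau}, the convention \rref{etauh} for heights and \rref{etauw} for weights, a pair $(T,T')$ related by a black edge swap $e\mapsto f$ with $h(e)=0$, $h(f)=1$ corresponds exactly to a pair $(\tau(T'),\tau(T))$ of white spanning trees related by the white edge swap $\tau(f)\mapsto\tau(e)$, where $h(\tau(f))=0$ and $h(\tau(e))=1$. The content of symmetry is then the identity
\beg{esymclaim}{\left(\frac{1}{1+T^{A(T,T')}}+\frac{1}{1+T^{B(T,T')}}\right)=\left(\frac{1}{1+T^{A^W(\tau(T'),\tau(T))}}+\frac{1}{1+T^{B^W(\tau(T'),\tau(T))}}\right),}
where the superscript $W$ denotes the corresponding quantities computed in the white graph. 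As noted in the paragraph preceding the proposition, $B(T,T')$ is already ``white-symmetric'': it is the evaluation of the weight cocycle on the white circuit dual to the set of black edges running between $C$ and $C'$, and the orientation of that white circuit is determined intrinsically (not by the choice of $e,f$), so $B(T,T')=B^W(\tau(T'),\tau(T))$ on the nose once one checks the orientations match. The real work is therefore in relating $A(T,T')$ to $A^W(\tau(T'),\tau(T))$.

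The key step is a homological computation in $H_1$ of $S^2$ with the two Poincar\'e-dual CW structures $B(S^2)$, $W(S^2)$. The black circuit $c$ through $E(T)\cup E(T')$ and the white circuit $w$ appearing in $B(T,T')$ are both $1$-cycles; since both graphs sit on $S^2$ (and by Assumption~\rref{lass} their faces are disks), I would express the class of $c$ in $B(\md)$ and the class of $w$ in $W(\md)$ in terms of a common collection of ``elementary'' cycles, namely the boundaries of the black faces, equivalently the stars of white vertices. Concretely, $c$ bounds on one side a union of black faces; summing the weight cocycle over the boundaries of those faces (each of which, pushed through $\tau$, is a star of a white vertex on which the cocycle $w$ vanishes, being a coboundary) gives a relation expressing $A(T,T')$ as $B(T,T')$ plus or minus $A^W$-type terms. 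The linear-independence hypothesis on the weights, strengthened by Lemma~\ref{l1} to injectivity of $w$ on $H_1(B(\md))\oplus H_1(W(\md))$, guarantees there is no coincidental cancellation and pins down the signs. Once the homological identity $A(T,T')+A^W(\tau(T'),\tau(T))=B(T,T')+\dots$ (in the appropriate additive form) is in hand, \rref{esymclaim} follows from the elementary identity $\frac{1}{1+x}+\frac{1}{1+y}=\frac{1}{1+xy^{-1}}+\frac{1}{1+x^{-1}y}$-type manipulations valid over a field of characteristic $2$ — indeed over characteristic $2$ one has $\frac1{1+x}+\frac1{1+y}=\frac{x+y}{(1+x)(1+y)}$ and the needed collapse is exactly a M\"obius-transformation identity of the kind the introduction promises.

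For independence of the base point, I would argue that changing the base point from one arc to another changes, for each pair $(T,T')$, possibly the choice of which connected component of $T\cap T'$ is ``$C'$'' and hence the orientation of the circuit $c$, i.e. it replaces $A(T,T')$ by $-A(T,T')$ and $B(T,T')$ by $-B(T,T')$. But over characteristic $2$, $\frac{1}{1+T^{a}}+\frac{1}{1+T^{b}}=\frac{1}{1+T^{-a}}+\frac{1}{1+T^{-b}}$, since $\frac1{1+T^a}+\frac1{1+T^{-a}}=\frac{1+T^{-a}+1+T^a}{(1+T^a)(1+T^{-a})}=\frac{T^a+T^{-a}}{T^a+T^{-a}}=1=1+1\cdot 1$ forces the pairing to be invariant under simultaneous sign reversal. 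More carefully, one checks that moving the base point across a single crossing changes the relevant components for only a controlled family of pairs and the net effect on each term of \rref{epsi} is the $a\mapsto -a$ symmetry just noted; since any two base points are connected by a sequence of such elementary moves, independence follows. I expect the main obstacle to be the bookkeeping in the homological step: correctly identifying which union of faces the circuit $c$ bounds, tracking the induced orientations through $\tau$ (the $90^\circ$ rotation convention), and confirming that the signs work out so that the characteristic-$2$ M\"obius identity applies without a leftover term. The linear-independence of weights and Lemma~\ref{l1} are what make this rigid enough to force the identity rather than merely suggest it.
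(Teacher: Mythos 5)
Your treatment of base-point independence is correct and is the same as the paper's: both $A(T,T')$ and $B(T,T')$ flip sign simultaneously, and the characteristic-$2$ identity $\frac{1}{1+k}+\frac{1}{1+\ell}=\frac{1}{1+k^{-1}}+\frac{1}{1+\ell^{-1}}$ (which is \rref{eklsign} in the paper) kills the effect. The granular "move the base point one crossing at a time" step is not needed but does no harm.

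For black-white symmetry, your route is genuinely different from the paper's and, as written, has a gap. The paper's argument is a direct orientation check: after swapping the roles of black and white, one observes that for each pair $(T,T')$ the two formulas compute the \emph{same} pair of numbers $\{A,B\}$, or else compute $\{-A,-B\}$, and that which case occurs is governed by whether the white base point lies on the clockwise side of the black circuit $c$; the characteristic-$2$ identity \rref{eklsign} then finishes. There is no homological computation and no appeal to the fundamental lemma. Your sketch instead proposes to (i) match $B(T,T')$ with $B^W(\tau T',\tau T)$ "on the nose," and (ii) relate $A$ to $A^W$ via a computation in $H_1(S^2)$ expressing $c$ as a sum of face boundaries. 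Two problems: first, under the duality $\tau$ the roles of the circuit sum and the cut sum actually \emph{swap} (the white circuit dual to the cut set appearing in $B(T,T')$ is precisely the circuit evaluated by $A^W$, not $B^W$), so the intended one-to-one matching $A\leftrightarrow A^W$, $B\leftrightarrow B^W$ is the wrong pairing to try to establish; that the final sum is symmetric in $A$ and $B$ saves the statement but not the outlined proof. Second, the homological step is not carried out: you assert that after pushing through $\tau$ "the needed collapse is exactly a Möbius-transformation identity" but never identify the identity, and the sign bookkeeping you flag as "the main obstacle" is in fact the entire content, which the paper handles by the clean orientation dichotomy described above rather than by anything homological.

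Separately, the appeal to Lemma~\ref{l1} to "pin down the signs" is a misuse of that lemma. Lemma~\ref{l1} is an injectivity statement: it shows the weight cocycle is nonzero on any nonzero class in $H_1(B(\mathcal{D}))\oplus H_1(W(\mathcal{D}))$, and its role in the paper is to guarantee that the denominators $1+T^{A}$, $1+T^{B}$ are nonzero. It cannot force an \emph{equality} of two a priori different real numbers, which is what "pinning down the signs" would require. If you wish to keep the homological flavor of your argument, the right move is to make the orientation comparison explicit and then invoke only \rref{eklsign}, without the fundamental lemma.
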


\Proof
Let us first discuss independence of the choice of base point. Clearly, the definition presented above
only depends on the choice of black base point. Now when the black base point
moves from the connected component $C$ to the component $C^\prime$, both of
the numbers $A(T,T^\prime)$, $B(T,T^\prime)$ get multiplied by $-1$. Thus, the
differential remains the same by the formula
\beg{eklsign}{\frac{1}{1+k}+\frac{1}{1+\ell}=\frac{1}{1+k^{-1}}+\frac{1}{1+\ell^{-1}},
}
which is valid in fields of characteristic $2$. Let us now turn to the question of
swapping black and white. By definition, the differential after the swap
will be equal to the original differential when $T$, $T^\prime$ are such that
the white base point is inside the black circuit $c$ if and only if $c$ is
oriented clockwise (note that the roles of $e,f$ are the opposite
from the roles of the white edges crossing them). 
By \rref{eklsign}, then, again, the differential doesn't
change when the white base point is in the other connected component of $S^2\smin c$,
and hence is equal to the original differential.
\qed

\vspace{3mm}
It is worth noting that there is one variant $\Psi^\prime$ of the definition of
$\Psi$ which does produce possibly different cohomology, namely if we change the
convention so that one of the numbers $A(T,T^\prime)$, $B(T,T^\prime)$ remains the same, and
the other is multiplied by $-1$. We see that one way of achieving this is
by swapping the roles of $e$ and $f$ in determining the orientation of $c$.
Therefore, by the universal coefficient theorem,
the cohomology of the complex modified in this way is isomorphic to the 
dual of the
$\Psi$-cohomology of the mirror projection $\mathcal{D}^\prime$ to $\mathcal{D}$
of the mirror link $L^\prime$ 
of $L$. More precisely, counting the number of positive
and negative crossings, and keeping in mind that a positive crossing turns
into negative and vice versa in the mirror projection,
the sign of the cohomological degree gets reversed. Thus, we have proved

\vspace{3mm}
\begin{proposition}
\label{ppsym1}
We have
$$H^i(C(\mathcal{D},\Psi^\prime))=H^{-i}(C(\mathcal{D}^\prime,\Psi)).$$
\end{proposition}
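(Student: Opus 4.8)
The plan is to unwind the definition of the modified differential $\Psi'$ and show that it is, term by term, the ``transpose'' of $\Psi$ computed on the mirror projection $\mathcal{D}'$. First I would fix the comparison between $\mathcal{D}$ and its mirror $\mathcal{D}'$: the underlying black and white graphs are literally the same combinatorial objects (one can take $B(\mathcal{D}') = B(\mathcal{D})$, $W(\mathcal{D}') = W(\mathcal{D})$ with the same edge sets and the same bijection $\tau$), but passing to the mirror swaps over- and under-crossings, hence by the height convention of Figure~2 replaces $h(e)$ by $1 - h(e)$ for every black edge $e$, and simultaneously turns each positive crossing into a negative one and vice versa, so $n_+ \leftrightarrow n_-$. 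The weights $w(e)$ can be kept unchanged. From \rref{etreeh} this means $h_{\mathcal{D}'}(T) = \sum_{e \notin E(T)} h(e) + \sum_{e \in E(T)}(1 - h(e))$, and combined with \rref{edegree} and the swap $n_+ \leftrightarrow n_-$, a spanning tree $T$ sitting in cohomological degree $i$ for $(\mathcal{D}, \Psi)$ sits in degree $-i$ for $(\mathcal{D}', \Psi)$. This is the source of the degree reversal in the statement; I would record it as a short lemma-style computation.

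Next I would match the differentials. A pair $(T, T')$ contributing to $\Psi'$ on $\mathcal{D}$ requires black edges $e, f$ with $h(e) = 0$, $h(f) = 1$, and $E(T') = (E(T) \smin \{e\}) \cup \{f\}$; after mirroring, the roles of $e$ and $f$ exchange (since heights flip), so exactly the same unordered pair $\{T, T'\}$ is a valid transition for $\Psi$ on $\mathcal{D}'$, but with $T'$ now playing the role of the ``source''. Thus $\Psi$ on $\mathcal{D}'$ has a matrix entry from $T'$ to $T$ precisely where $\Psi'$ on $\mathcal{D}$ has an entry from $T$ to $T'$ — this is the transpose relationship, i.e.\ $C(\mathcal{D}', \Psi)$ is (up to the degree relabelling above) the $\Lambda$-linear dual cochain complex of $C(\mathcal{D}, \Psi')$. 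I would then check that the scalar coefficients agree: the coefficient attached to $(T, T')$ in $\Psi'$ is $\frac{1}{1 + T^{A}} + \frac{1}{1 + T^{B}}$ where, by the defining feature of $\Psi'$, one of $A, B$ is the $\mathcal{D}$-value and the other is its negative (because the orientation of $c$ is determined by $e$ and $f$ in the swapped order). Using \rref{eklsign}, $\frac{1}{1+k} + \frac{1}{1+\ell^{-1}} = \frac{1}{1+k} + \frac{1}{1+\ell}$ when the base-point convention for $\mathcal{D}'$ is read off correctly, so this coefficient equals the coefficient of $(T', T)$ in $\Psi$ on $\mathcal{D}'$; here I would invoke Proposition~\ref{ppsym} to be free about base-point and black/white choices on the mirror side.

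Finally I would conclude via the universal coefficient theorem: since $\Lambda$ is a field, taking the $\Lambda$-dual of a cochain complex turns $H^i$ into $\mathrm{Hom}_\Lambda(H^{?}, \Lambda)$ with the grading negated, and over a field these Hom-spaces are (non-canonically) isomorphic to the original graded pieces. Combining this with the identification $C(\mathcal{D}', \Psi) \cong C(\mathcal{D}, \Psi')^{*}$ and the degree-reversal bookkeeping yields $H^i(C(\mathcal{D}, \Psi')) \cong H^{-i}(C(\mathcal{D}', \Psi))$, as claimed. I expect the main obstacle to be the careful sign/orientation bookkeeping in the middle step: one must verify that ``exactly one of $A, B$ changes sign'' really is what the modified convention produces for $\Psi'$, and then that the base-point choice forced on the mirror projection makes the two inversions land in the right arguments of \rref{eklsign}; getting the orientation of the circuit $c$ straight under the $e \leftrightarrow f$ swap, and tracking which connected component contains the (new) base point, is where an error would most easily creep in. The degree computation and the appeal to universal coefficients are routine by comparison.
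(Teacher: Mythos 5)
Your proposal follows the paper's (very terse) argument exactly: heights flip and $n_\pm$ swap under mirroring, giving the degree reversal via \rref{edegree}; the transitions of $\Psi'$ on $\mathcal{D}$ are the transposes of those of $\Psi$ on $\mathcal{D}'$; and universal coefficients over the field $\Lambda$ finish the job. One small slip worth flagging: the identity ``$\frac{1}{1+k}+\frac{1}{1+\ell^{-1}}=\frac{1}{1+k}+\frac{1}{1+\ell}$'' is false as written (it would force $\ell=\ell^{-1}$), and in fact it is unnecessary — if you keep the same base point arc and the same edge weights on $\mathcal{D}'$ as on $\mathcal{D}$, a direct check gives $A_{\mathcal{D}'}(T',T)=-A_{\mathcal{D}}(T,T')$ and $B_{\mathcal{D}'}(T',T)=B_{\mathcal{D}}(T,T')$, which matches the $\Psi'$ coefficient on $\mathcal{D}$ verbatim, so \rref{eklsign} is only needed (as in Proposition \ref{ppsym}) to absorb a possibly different choice of base point, not as an equality of the displayed form.
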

\qed

\vspace{3mm}
It may be tempting to
call the cohomology of $(C(\md),\Psi)$ twisted Khovanov homology, but this
is, in fact, inaccurate, since it is the $E_3$-term (and not $E_2$-term)
of the twisted analogue of the spectral sequence \cite{os2}
from $E_2=$ Khovanov homology to Heegaard Floer homology of $\Sigma(L)$.
Because of this, we use the term BOS cohomology. During the refereeing process of
this paper, it also came to our attention that the term `twisted Khovanov homology'
was being used by Roberts \cite{roberts} and Jaeger \cite{jaeg}.

\vspace{3mm}
The field $\Lambda$ and the selection of arbitrary weights with the requirement
that they be linearly independent over $\Q$ may seem unnatural. In fact, 
it can be restated. First recall that in computing the numbers $A(T,T^\prime)$,
we always sum the weights of edges of a consistently oriented circuit $c$. The
circuit determines a cellular $1$-cycle, i.e. an element 
$$\overline{c}\in Z_{1}^{cell}(B(S^2),\Z).$$
Now since $H^1(S^2,\Z)=0$, we have $\overline{c}=dx$ where 
$x\in C_{2}^{cell}(B(S^2),\Z)$. The generators of $C_{2}^{cell}(B(S^2))$ 
are faces $f$, which, by convention, we orient so that the circuit $df$ is oriented
counter-clockwise for the bounded faces and
clockwise for the unbounded face. Then the sum $\sum f$ 
of all the faces of $B(S^2)$ is a $2$-cycle representing the
fundamental class of $S^2$, and $x$ is determined uniquely up to adding integral
multiples of $\sum f$. This means that if we choose a field $F$ of characteristic $2$, 
and for each face $f$ we choose an
element
$u_f\in F$, with the relation
\beg{erel1}{\prod_f u_f=1,}
we may assign to $c$ a well defined element
$$\alpha(T,T^\prime)=\prod_f (u_f)^{\epsilon_f}$$
where 
$$x=\sum_f \epsilon_f f.$$
Similarly, $B(T,T^\prime)$ may be interpreted as the cellular $1$-cochain
in $C^{1}_{cell}(B(S^2))$ which is of the form $\delta(y)$ where the value of
$y$ is $1$ on all the vertices of $C$, and $0$ on all the vertices of $C^\prime$.
The sum $\sum v$ of all vertices of $B(S^2)$ satisfies $\delta(\sum v)=0$
(it represents the unit element in $H^0(S^2,\Z)$, so if we choose, again,
an element $z_v\in F$ for every vertex $v$, subject to the relation
\beg{erel2}{\prod_v z_v=1,}
then we may assign to $T,T^\prime$ a well defined element
$$\beta(T,T^\prime)=\prod_v (z_v)^{y(v)}.$$
Then if the variables $u_f$, $z_v$ belong to any field $F$ of characteristic $2$,
we may define $C(\md,F,(u_f),(z_v))$ as the free $F$-module on all spanning trees of $B(\md)$,
and define $\Psi$ by
\beg{epsip}{\Psi(T)=\sum_{T^\prime} \left(\frac{1}{1+\alpha(T,T^\prime)}+
\frac{1}{1+\beta(T,T^\prime)}\right)T^\prime.
}
Similarly as in our Comment earlier, the elements $\alpha(T,T^\prime)$ and $\beta(T,T^\prime)$
are not equal to $1$ (and hence \rref{epsip} makes sense) provided that
\beg{cass}{\parbox{3.5in}{The elements $u_f$ where $f$ ranges over all faces 
of $B(S^2)$ with one
face omitted, and the elements $z_v$ where $v$ ranges
over all vertices of $B(S^2)$ with one vertex omitted are jointly algebraically independent in $F$.}
\tag{C}}
(It would, in fact, suffice for the face and vertex variables in \rref{cass} to be {\em 
separately} algebraically independent, but the condition \rref{cass} as stated will
be more convenient for other purposes below.)

Let us now take this discussion one step further. Let $E$ be any field of characteristic $2$
containing an element $q_e$ for each edge
$e$ of $B(S^2)$. We can then set in $E$
\beg{efface}{u_f=\prod_{e} (q_{e})^{\alpha(e)}}
where 
$$d(f)=\sum_{e} \alpha(e) e,$$
and
\beg{evvertex}{z_v=\prod_{e}(q_e)^{\delta(v)(e)}.
}
Let $F$ be the subfield of $E$ generated by $u_f$, $z_v$. In the next Section, we shall prove
the following

\begin{proposition}
\label{elll}
The elements $q_e$ are algebraically independent in $E$ if and only if the Condition \rref{cass}
is satisfied in $F$ (and hence in $E$) . 
\end{proposition}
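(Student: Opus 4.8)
The plan is to make the whole statement into a clean statement about monomial maps on spaces of Laurent monomials. Consider the free abelian group $M = \Z^{E(B(S^2))}$ generated by the edges, thought of as the group of monomials in the $q_e$. The assignment $q_e \mapsto q_e$ with the formulas \rref{efface}, \rref{evvertex} defines a homomorphism of abelian groups
\[
\varphi : \Z^{F(B(S^2))} \oplus \Z^{V(B(S^2))} \longrightarrow M = \Z^{E(B(S^2))},
\]
namely $\varphi$ sends the standard basis vector at a face $f$ to $d(f) = \sum_e \alpha(e)\, e$ and the standard basis vector at a vertex $v$ to $\delta(v) = \sum_e \delta(v)(e)\, e$; in other words $\varphi$ is the direct sum of the cellular boundary map $\partial_2 : C_2^{cell}(B(S^2)) \to C_1^{cell}(B(S^2))$ and the cellular coboundary map $\delta_0 : C_0^{cell}(B(S^2)) \to C^1_{cell}(B(S^2))$, the latter identified with $C_1$ via the canonical basis of edges. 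The key translation is the standard fact that, for elements of a field of characteristic $2$ (or any field), a finite set of Laurent monomials $m_1,\dots,m_k$ in algebraically independent elements $q_e$ is multiplicatively independent — equivalently the associated monomials are algebraically independent in $E$ — if and only if the corresponding integer vectors are linearly independent over $\Q$; and more generally, if the $q_e$ are algebraically independent, then the subfield generated by a collection of monomials has transcendence degree equal to the rank of the $\Z$-span of their exponent vectors. So Condition \rref{cass} in $F$ says exactly that the images under $\varphi$ of the standard basis vectors indexed by all faces but one, together with all vertices but one, are $\Q$-linearly independent in $\Q \otimes M$; i.e. that the restriction of $\varphi$ to that coordinate subspace is injective, equivalently $\operatorname{rank}(\varphi) \ge \#F + \#V - 2$.

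With this dictionary in hand, both directions reduce to linear algebra over $\Q$. First I would observe that $\ker \partial_2$ is spanned by $\sum_f f$ (since $H_2(S^2)=\Z$ and the $2$-cells of $B(S^2)$ carry the boundary of the fundamental class, exactly as noted in the paragraph preceding \rref{erel1}) and $\ker \delta_0$ is spanned by $\sum_v v$ (since $H^0(S^2)=\Z$, as noted before \rref{erel2}). Hence $\partial_2$ has rank $\#F - 1$ on the quotient by $\langle \sum f\rangle$ and $\delta_0$ has rank $\#V - 1$. The only thing that can make $\varphi$ fail to have rank $\#F + \#V - 2$ on the "one-omitted" coordinate subspace is a nontrivial intersection $\operatorname{im}(\partial_2) \cap \operatorname{im}(\delta_0)$ inside $\Q\otimes C_1^{cell}(B(S^2))$ under the edge identification; but $\operatorname{im}(\partial_2) = \ker \partial_1$ is the space of $1$-cycles and $\operatorname{im}(\delta_0)$ is the space of $1$-coboundaries, and under the (unimodular, hence $\Q$-bilinear-perfect) pairing on $C_1$ given by the edge basis these two subspaces are orthogonal complements, so they meet only in $0$. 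Therefore $\varphi$ restricted to the "all faces but one $\oplus$ all vertices but one" subspace is always injective over $\Q$: Condition \rref{cass} holds in $F$ for \emph{every} field $E$ with the $q_e$ specialized however one likes, as soon as the monomials generate $F$. This shows the "only if" direction is automatic — if the $q_e$ are algebraically independent in $E$, then \rref{cass} holds in $F$.

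For the converse — \rref{cass} in $F$ forces the $q_e$ to be algebraically independent in $E$ — the natural route is a transcendence-degree count. We have $\operatorname{trdeg}_{\F_2} F \le \operatorname{trdeg}_{\F_2} E \le \#E(B(S^2))$, with equality in the last inequality exactly when the $q_e$ are algebraically independent. On the other hand, by the monomial-rank fact, $\operatorname{trdeg}_{\F_2} F$ equals the rank of the $\Z$-span of all the exponent vectors $\{d(f)\} \cup \{\delta(v)\}$ in $C_1^{cell}(B(S^2))$; by the cycle/coboundary orthogonality argument above this rank is $(\#F - 1) + (\#V - 1)$, which by Euler's formula $\#V - \#E + \#F = 2$ for the CW-decomposition $B(S^2)$ of $S^2$ equals exactly $\#E(B(S^2))$. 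So $\operatorname{trdeg}_{\F_2} F = \#E$ unconditionally, whence $\operatorname{trdeg}_{\F_2} E = \#E$, i.e. the $q_e$ are algebraically independent — wait, this again seems to give the conclusion unconditionally, so I must be more careful: the subtlety is that $\operatorname{trdeg}_{\F_2} F = \#E$ only \emph{bounds below} $\operatorname{trdeg}_{\F_2} E$, and this lower bound equals the maximum possible $\#E$, so indeed it is forced; the role of hypothesis \rref{cass} is that \emph{without it} $F$ need not have full transcendence degree (the monomials could satisfy a relation coming from a specialization of the $q_e$), and \rref{cass} is precisely what rules that out. So the honest logical structure is: \rref{cass} $\iff$ $\operatorname{trdeg}_{\F_2} F = \#F + \#V - 2 = \#E \iff \operatorname{trdeg}_{\F_2} E = \#E \iff$ the $q_e$ are algebraically independent, using at the middle step that transcendence degree is monotone and $F \subseteq E$ with $\operatorname{trdeg} E \le \#E$.

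The step I expect to be the real obstacle is pinning down precisely the "monomial independence $=$ lattice rank" statement in exactly the form needed and being careful that it is a statement about $\operatorname{trdeg}$ of the generated subfield and not merely about multiplicative relations — in characteristic $2$ one must check there are no Frobenius-type collapses, but since these are genuine Laurent monomials in the $q_e$ with integer (not mod-$2$) exponents, a relation among them is a relation with integer exponents, and the usual argument (a multiplicative dependence among monomials in algebraically independent elements forces the exponent vectors to be $\Q$-dependent, by looking at the Newton polytope / leading-term ordering) goes through over any field. The remaining bookkeeping — matching "one face omitted, one vertex omitted" with the kernels of $\partial_2$ and $\delta_0$, and invoking Euler's formula — is routine, and I would relegate it to a sentence or two.
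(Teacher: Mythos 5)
Your proof is correct but reaches the paper's key dimension count by a genuinely different argument. The paper deduces Proposition~\ref{elll} in one paragraph from Corollary~\ref{cl1} (surjectivity of $d\oplus\delta$ over $\Q$): that gives $(q_e)^{n_e}\in F$, hence $\operatorname{trdeg}_{\F_2}F\geq\#E(B(S^2))$, and the algebraic independence of the $u_f,z_v$ then follows by counting; Corollary~\ref{cl1} is itself obtained from the Fundamental Lemma~\ref{l1}, a discrete-harmonicity (maximum-principle) argument. You instead observe directly that $\operatorname{im}\partial_2$ and $\operatorname{im}\delta_0$ are orthogonal under the standard edge inner product on $C_1^{cell}(B(S^2),\Q)$ (since $\delta_0=\partial_1^{T}$ and $\partial_1\partial_2=0$), and since that form is positive-definite over $\Q$ they meet only in $0$; together with Euler's formula $\#V-\#E+\#F=2$ this recovers the same rank statement. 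That is an alternative, arguably more elementary and self-contained, proof of Lemma~\ref{l1} and Corollary~\ref{cl1}, bypassing the harmonicity argument entirely. The remainder of your argument — re-expressing everything via exponent lattices of Laurent monomials and a transcendence-degree count — parallels the paper. Two small clean-ups: the bound $\operatorname{trdeg}\leq\#E(B(S^2))$ should be applied to the subfield $\F_2(q_e)\subseteq E$ rather than to $E$ itself (which is allowed to be larger), so the $\iff$'s at the end should be read with that substitution; and your opening reformulation of Condition~\rref{cass} as $\Q$-linear independence of exponent vectors is valid only once the $q_e$ are already known to be algebraically independent — a circularity you catch mid-argument and repair by routing both conditions through $\operatorname{trdeg}_{\F_2}F=\#E(B(S^2))$, but the write-up would read more cleanly if organized around that equivalence from the start. (Also, ``unimodular'' is not quite the relevant property of the pairing here — what you actually use is positive-definiteness over $\Q$.)
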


\noindent
{\bf Remark:} In the Novikov field $\Lambda$, the variables
$$q_e=T^{w(e)}$$
are algebraically independent, so Proposition \ref{elll} shows how 
$$C(\mathcal{D},E,(u_f),(z_v))$$ 
generalizes the cochain complex defined above by \ref{epsi}.
In this context, it is also worth noting that if $F$ is a subfield of $E$
and $C$ is a cochain complex of $F$-modules, then 
$$\rank_{E}(H^i(C\otimes_F E))=\rank_{F}(H^i(C))$$
(by flatness of field extensions). For this reason, from now on, we
shall work in general with complexes of the form $C(\mathcal{D},F,(u_f),(z_v))$
for a field $F$ satisfying the Condition \rref{cass}. To simplify notation,
we shall generally denote this complex simply by $C(\mathcal{D})$ 
where the field $F$ and the elements $u_f$, $z_v$ are understood.

\vspace{3mm}
Let us now state our main result:

\vspace{3mm}

\begin{theorem}
\label{t1p}
Let $F$ be a field of characteristic $2$ with elements $u_f$, $z_v$ satisfying
the relations \rref{erel1}, \rref{erel2} and the condition \rref{cass}. 
Then for each $i$,
\beg{erank}{\rank_F(H^i(C(\md,F,(u_f),(z_v)),\Psi)))}
defined by \rref{epsip} is independent of the choice of such a $F$, and of the projection
$\md$ of an oriented link $L$, subject to the condition \rref{lass}. If, 
further, $L$ is a knot, then \rref{erank} is independent of orientation. If
$L$ is a link which has a projection with more than $1$ connected component (i.e.
a split link),
then \rref{erank} is equal to $0$. 
\end{theorem}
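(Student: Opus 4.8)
The plan is to prove the four assertions of Theorem~\ref{t1p} in turn: independence of the coefficient data $(F,(u_f),(z_v))$; invariance of~\rref{erank} under the Reidemeister moves (hence independence of the projection among diagrams satisfying~\rref{lass}); orientation‑independence for knots; and vanishing for split links.

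\emph{Independence of the coefficients.} The matrix entries of $\Psi$ in~\rref{epsip} are the elements $\frac{1}{1+\alpha(T,T')}+\frac{1}{1+\beta(T,T')}$, and $\alpha(T,T')$, $\beta(T,T')$ are Laurent monomials in the $u_f$, $z_v$, with $1+\alpha,1+\beta\neq0$ by Condition~\rref{cass}. Hence $\Psi$ is already defined over the subfield $F_0\subseteq F$ generated by the $u_f$ and $z_v$, and $C(\md,F,(u_f),(z_v))=C(\md,F_0,(u_f),(z_v))\otimes_{F_0}F$, so by the flatness remark following Proposition~\ref{elll} the rank~\rref{erank} is already computed over $F_0$. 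By~\rref{erel1},~\rref{erel2} and~\rref{cass}, for any fixed face $f_0$ and vertex $v_0$ the set $\{u_f:f\neq f_0\}\cup\{z_v:v\neq v_0\}$ is a transcendence basis of $F_0$ over $\F_2$, while $u_{f_0},z_{v_0}$ are the Laurent monomials in it forced by~\rref{erel1},~\rref{erel2}; consequently any two admissible choices of data determine an isomorphism $F_0\xrightarrow{\ \sim\ }F_0'$ carrying $u_f\mapsto u_f'$, $z_v\mapsto z_v'$, hence an isomorphism $C(\md,F_0)\xrightarrow{\ \sim\ }C(\md,F_0')$ of complexes. This yields the first assertion, so from now on the coefficients may be fixed.

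\emph{Reidemeister invariance.} Next I would check invariance of~\rref{erank} under the moves $R_1$, $R_2$, $R_3$; by Reidemeister's theorem this gives independence of the projection among diagrams satisfying~\rref{lass}. For $R_1$, adjoining a curl to $\md$ adjoins to $B(\md)$ a bridge or a loop; neither lies on any black circuit, so the formula~\rref{epsip} for $\Psi$ on the remaining spanning trees is literally unchanged, and the height conventions---which are required to be compatible with the positive/negative crossing convention---are arranged precisely so that the accompanying change of $n_\pm$ is cancelled, leaving $C(\md)$ unchanged as a graded complex. For $R_2$, after possibly interchanging the two colours so that the bigon of the $R_2$‑tangle is white (legitimate by Proposition~\ref{ppsym}), the move adjoins to $B(\md)$ two \emph{parallel} edges $e_1,e_2$ through the two new crossings, with $\{h(e_1),h(e_2)\}=\{0,1\}$. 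The spanning trees of the enlarged black graph split into those containing $e_1$ only, those containing $e_2$ only, and those containing neither; the first two families are matched by $T\mapsto T\triangle\{e_1,e_2\}$, and the only component of $\Psi$ between a matched pair is the single scalar $\frac{1}{1+\alpha(T,T')}+\frac{1}{1+\beta(T,T')}$. This scalar is a \emph{unit} of $F$: the denominators are nonzero by~\rref{cass}, and $\alpha(T,T')\neq\beta(T,T')$ because the black cycle computing $\alpha$ and the white cycle computing $\beta$ are distinct nonzero classes in $H_1(B(\md),\Z)\oplus H_1(W(\md),\Z)$---this is exactly where the fundamental lemma (Lemma~\ref{l1}) is used. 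Gaussian elimination of the matched pairs leaves a complex supported on the spanning trees containing neither $e_1$ nor $e_2$, which are exactly the spanning trees of the original $B(\md)$; identifying the corrected differential with the original $\Psi$ is where the characteristic‑$2$ identity~\rref{eklsign} and the M\"obius‑transformation identities are applied. The $R_3$ case follows the same pattern but with several matched pairs and a correspondingly larger system of M\"obius identities, and \emph{this is the step I expect to be the main obstacle}---both in organising which spanning trees cancel against which, and in verifying the resulting algebraic identities over fields of characteristic $2$.

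\emph{Orientation and split links.} Orientation‑independence for a knot is then immediate: reversing the orientation of a knot changes neither $n_+$ nor $n_-$, nor $B(\md)$, the heights, or the weight (cocycle) data, so $C(\md)$ is unchanged; for a link with more than one component this argument fails only in that $n_-$ may change, merely shifting the grading. Finally, let $L=L_1\sqcup L_2$ be split. Using the invariance already proved, I would compute~\rref{erank} from a connected projection $\md$ satisfying~\rref{lass} in which a strand of $L_1$ and a strand of $L_2$ are joined by a clasp. As in the $R_2$ analysis the two crossings of the clasp contribute two parallel edges $e_1,e_2$ to $B(\md)$ with $\{h(e_1),h(e_2)\}=\{0,1\}$, but now these are the only edges joining the part of $B(\md)$ arising from $L_1$ to the part arising from $L_2$, so \emph{every} spanning tree of $B(\md)$ contains exactly one of $e_1,e_2$. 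Writing $C(\md)=C_1\oplus C_2$ for the spans of the spanning trees containing $e_1$, resp.\ $e_2$, the differential $\Psi$ maps $C_2$ into $C_2$ and $C_1$ into $C_1\oplus C_2$, with off‑diagonal block $C_1\to C_2$ the matched‑pair map; by the unit computation above this block is an isomorphism of graded $F$‑modules, and it is a chain map because $\Psi\circ\Psi=0$ (Lemma~\ref{lpsipsi}). Hence $(C(\md),\Psi)$ is the mapping cone of an isomorphism of complexes, so it is acyclic and~\rref{erank}$=0$.
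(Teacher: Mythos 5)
Your outline agrees with the paper on the straightforward parts: reduction to the universal field $\mathbb{F}_2(u_f,z_v)$ via the flatness remark, R1 (bridge/loop, with the degree shift absorbed by the change in $n_\pm$), orientation-independence for knots (reversing a knot's orientation preserves each crossing sign), and the split-link vanishing via the acyclic mapping-cone picture of Lemma~\ref{ltheta}. Your R2 set-up is also correct in structure: it is the colour-dual of the paper's ``Second spectral sequence'' (bigon white, hence parallel black edges $e_1,e_2$, versus the paper's bigon black, hence edges $\{u_1,v\},\{v,u_2\}$), and your unit-coefficient observation is the content of \rref{espec1}.

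The genuine gaps are in R2 and, more seriously, in R3. For R2, after cancelling the matched pairs you are left with the differential $d+i\eta^{-1}j$ of \rref{edeform}, and you assert that it can be ``identified with the original $\Psi$'' via \rref{eklsign} and the M\"obius identities. The paper explicitly warns this is not so: ``there is no a priori reason to suspect any connection between the differentials $d$ and \rref{edeform}: it is not even reasonable to call \rref{edeform} a deformation of $d$, since in general the two summands of \rref{edeform} do not commute.'' What the paper actually does (Lemma~\ref{lxyzt}, Lemma~\ref{lxyzt1}, Proposition~\ref{pxyzt}) is construct an auxiliary projection $\mathcal{E}'$ by a skein move and a second R2 move, obtain a dual three-term diagram \rref{equq}, match the two corrected differentials under the specialisation $x=t^2,\ y=t^{-1},\ z=t^{-2}$ (Lemma~\ref{lakl} is one ingredient of checking \rref{ejetaxip}), and then repair the loss of algebraic independence by embedding both sides into a larger field $F''=F(s,t)$. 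The result is that the two differentials are $F''$-conjugate after a change of variables, not equal; only the ranks coincide. None of this machinery is present in your proposal.

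For R3 the gap is more fundamental, and you yourself flag it. Your expectation that R3 ``follows the same pattern but with several matched pairs'' is precisely the approach the paper says cannot work directly: ``the projections before and after an R3 move play symmetrical roles: there is no obvious candidate inside the Baldwin-Ozsv\'{a}th-Szab\'{o} complex of one projection for a part which would be isomorphic to some modification of the complex of the other.'' The missing idea is the BR move (replace three parallel arcs by the unbraid $b^{-1}a^{-1}baba^{-1}$, Figure~8) together with Corollary~\ref{ecorr3}, which deduces R3 from a BR move plus several R2 moves. One then needs the First spectral sequence on the six new edges $e_1,\dots,e_6$, the explicit $e$-degree table, the cancellation Lemma~\ref{lr3cancel} (including a determinant non-vanishing check and the auxiliary relation $AD=CE$ of \rref{eeq2}), and a reduction of the resulting $d_2$ to two applications of the R2 machinery. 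Without the BR-move idea and the comparison apparatus of Section~\ref{sr2}, the R3 step in your proposal does not close.
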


\vspace{3mm}

\begin{figure}
\begin{tikzpicture}[line width=2pt]

\fill (.75,2.5) circle (3pt);
\fill (.75,-.13) circle (3pt);
\fill (3,.65) circle (3pt);

\draw[style=dashed] (.75,2.5) .. controls (1,2) and (1,.5) .. (.75,-.13);
\draw[style=dashed] (.75,-.13) .. controls (1.5,.4) and (2.5,.6) .. (3,.65);

\draw (.75,2.5) .. controls (0,1.75) and (0,0) .. (1,-.25) .. controls (2.25,-.5) and (2.75,.3) .. (3,.65);

\draw (.75,2.5) node[anchor=south]{$x$};
\draw (.75,-.13) node[anchor=north east]{$z$};
\draw (3,.65) node[anchor=south west]{$y$};

\fill (4.75,2.5) circle (3pt);
\fill (4.75,-.13) circle (3pt);
\fill (7,.63) circle (3pt);

\draw[style=dashed] (4.75,2.5) .. controls (5,2) and (5,.5) .. (4.75,-.13);
\draw[style=dashed] (4.75,-.13) .. controls (5.5,.4) and (6.5,.6) .. (7,.65);

\draw (7,.63) .. controls (7,2.25) and (5.5,3) .. (4.75,2.5) .. controls (4,1.75) and (4,0) .. (4.75,-.13);

\draw (4.75,2.5) node[anchor=south]{$x$};
\draw (4.75,-.13) node[anchor=north east]{$z$};
\draw (7,.65) node[anchor=south west]{$y$};
\draw (4.8,1.2) node[anchor=east]{$u$};
\draw (5.5,1.5) node[anchor=north west]{$v$};

\end{tikzpicture}
\caption{}
\end{figure}

Note that in view of the above Remark, 
one statement of the Theorem is already clear, namely the independence of the quantity
\ref{erank} of the field $F$ subject to the condition \rref{cass}: For, we may as well
work in the field of rational functions $\mathbb{F}_2(u_f,z_v)$ where $f$ run through all
but one face and $v$ run through all but one vertex of the black graph; the rank won't
change upon extension of fields. 
The following three sections consist of work toward the proof of Theorem \ref{t1p}. 
We conclude this section with a proof of Lemma \ref{lpsipsi}. In fact, in view of
the observations we made, it is more natural to prove the following generalization:

\begin{lemma}
\label{lpsipsi1}
Let $F$ be any field of characteristic $2$ with variables $u_f$ and $z_v$ for which
the expression \rref{epsip} makes sense (i.e. the denominators are non-zero). Then
we have \rref{epsipsi}.
\end{lemma}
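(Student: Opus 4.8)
The plan is to show that for any spanning tree $T$ of $B(\mathcal{D})$, the coefficient of each spanning tree $T''$ in $\Psi(\Psi(T))$ vanishes. Since $\Psi$ raises height by $2$, the relevant $T''$ are those with $h(T'')=h(T)+4$, and there are exactly two ``shapes'' of intermediate term: either $T''$ is obtained from $T$ by deleting two height-$0$ edges $e_1,e_2$ and adding two height-$1$ edges $f_1,f_2$, with two intermediate trees $T'_1$ (swap $e_1\leftrightarrow f_1$ first), $T'_2$ (swap $e_2\leftrightarrow f_2$ first); or the sets $E(T)\smin E(T'')$ and $E(T'')\smin E(T)$ involve a more degenerate configuration where only one intermediate tree exists, in which case the coefficient is automatically $0$ because there is no second contributing path. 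So the heart of the matter is the first case, where we must prove that the two contributions cancel, i.e. (working in characteristic $2$, so cancellation means the two terms are \emph{equal})
\[
\left(\frac{1}{1+\alpha(T,T'_1)}+\frac{1}{1+\beta(T,T'_1)}\right)\left(\frac{1}{1+\alpha(T'_1,T'')}+\frac{1}{1+\beta(T'_1,T'')}\right)
=\left(\frac{1}{1+\alpha(T,T'_2)}+\frac{1}{1+\beta(T,T'_2)}\right)\left(\frac{1}{1+\alpha(T'_2,T'')}+\frac{1}{1+\beta(T'_2,T'')}\right).
\]

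To make this tractable I would pass to the $\alpha,\beta$ description of \rref{epsip}, and reformulate everything in terms of the face variables $u_f$ and vertex variables $z_v$. The key combinatorial input is a careful analysis of how the two circuits (for the two swaps performed in either order) decompose the sphere $B(S^2)$ into faces, and how the two ``vertex cuts'' (the sets $C$ vs.\ $C'$ for each swap) interact. Concretely, performing swap $1$ then swap $2$ versus swap $2$ then swap $1$ produces circuits whose homology classes, expressed via the cellular boundary, differ by a controlled reindexing of the $\epsilon_f$'s; likewise the two relevant $0$-cochains $y$ differ in a controlled way. I would introduce notation for the four basic regions cut out by the union of the two circuits and the four basic vertex-blocks cut out by the union of the two vertex-partitions, name the corresponding products of $u_f$'s (resp.\ $z_v$'s) as $a,b,c,\dots$, and then verify that the displayed identity reduces to a purely algebraic identity among Möbius-type expressions $1/(1+x)$ in a field of characteristic $2$ — of the same flavor as \rref{eklsign} but one step more elaborate. (The identity $\frac{1}{1+kl}=$ symmetric combinations of $\frac{1}{1+k},\frac{1}{1+l}$ type manipulations, together with $\frac{1}{1+k}+\frac{1}{1+k^{-1}}=1$, should suffice.)

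The main obstacle I anticipate is purely bookkeeping: correctly enumerating the geometric configurations of the two circuits and the two vertex-partitions on $S^2$, since they can be nested, disjoint, or linked in several ways, and in each configuration one must check that the homology/cohomology classes appearing in $\Psi\circ\Psi$ pair up as claimed. Once the configurations are pinned down and the products of $u_f$'s and $z_v$'s attached to the elementary regions are named, the algebra should collapse to a short identity in characteristic $2$; the risk is entirely in mislabeling an orientation or an inside/outside choice for one of the circuits. I would organize the proof by first doing the ``generic'' configuration in full detail, then noting that the remaining configurations either reduce to it by the base-point independence already established in Proposition \ref{ppsym} (and the sign identity \rref{eklsign}), or are degenerate cases where one of the two intermediate trees fails to exist and the coefficient is trivially zero.
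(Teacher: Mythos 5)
Your proposal has a genuine combinatorial gap that undermines the whole argument. Fix $T$ and $T''$ differing by removing two height-$0$ edges $e_1,e_2$ and adding two height-$1$ edges $f_1,f_2$; an intermediate tree is $(E(T)\smin\{e_i\})\cup\{f_j\}$ for a pair $(i,j)$ making this a spanning tree. Contracting the $n-3$ edges common to $T$ and $T''$ leaves a forest with three components $A,B,C$, and each of $e_1,e_2$ (resp.\ $f_1,f_2$) joins a distinct pair of these components. A short pigeonhole argument then shows the number of valid pairs $(i,j)$ is always $2$ or $3$ --- never $1$, and never $4$. So your ``degenerate case where only one intermediate tree exists, in which case the coefficient is automatically $0$'' never occurs (and even if it did, a single path gives a generically nonzero product, not $0$). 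More seriously, you omit the $3$-intermediate-tree case entirely, and that is precisely the nontrivial one: the two-term ``equality in characteristic $2$'' you display is not what needs to be proved there. Instead one needs a three-term identity of the form
$$\left(\frac{1}{1+u}+\frac{1}{1+x}\right)\left(\frac{1}{1+uv}+\frac{1}{1+y}\right)
+\left(\frac{1}{1+v^{-1}}+\frac{1}{1+x}\right)\left(\frac{1}{1+uv}+\frac{1}{1+xy}\right)
+\left(\frac{1}{1+v}+\frac{1}{1+y}\right)\left(\frac{1}{1+u}+\frac{1}{1+xy}\right)=0,$$
which does not follow from the pairwise symmetry argument you outline.

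You also miss the structural simplification that makes the verification finite: since the coefficient of $T''$ in $\Psi\Psi(T)$ is unchanged under contracting each component of $T\smin\{e_1,e_2\}$ to a point, one may assume $T$ is literally the path on vertices $x,y,z$ with height-$0$ edges $\{x,z\},\{y,z\}$, and $T''$ is one of exactly two non-isomorphic $2$-edge spanning trees of height-$1$ edges. Case $T''=\{\{x,z\},\{y,z\}\}$ gives the two-intermediate-tree configuration where your pairwise cancellation does work (the two summands of $\Psi\Psi$ are literally equal because the circuits and vertex partitions agree in reverse order). Case $T''=\{\{x,z\},\{x,y\}\}$ gives three intermediate trees and requires the displayed three-term identity, verified directly. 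Your plan of enumerating how two circuits and two vertex-partitions can be ``nested, disjoint, or linked'' on $S^2$ would be far heavier bookkeeping than this reduction, and as stated it is aimed at proving the wrong algebraic statement.
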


\vspace{3mm}
\Proof
Our aim is to compute
\beg{epsipsit}{\Psi\Psi(T)}
for a spanning tree $T$, and prove that its coefficient on
any tree $T^{\prime\prime}$ is equal to $0$.
The key observation is that it actually suffices to consider the
case when $T$ has only two edges of height $0$, since otherwise
we may contract each component of the complement of the two open edges
in $T$ to a point and obtain the same coefficient. 

Now up to isomorphism, there is only one tree with two edges. It has vertices
$x,y,z$ and edges $\{x,z\}$, $\{y,z\}$ (of height $0$).
Then there are two non-isomorphic
choices of the tree $T^{\prime\prime}$ (consisting of two edges of height
$1$): the edges of $T^{\prime\prime}$ of height $1$
may be either $\{x,z\}$, $\{y,z\}$,
or $\{x,z\}$, $\{x,y\}$. (To clarify, in both cases we are dealing with a multigraph of $4$
edges here; we continue using our convention of writing
edges as ``sets'' because they are distinguished
by height. See Figure 3 where height $0$ edges are
rendered as dashed and height $1$ edges as solid.) 

 In the first case, the coefficient of \rref{epsipsit}
at $T^{\prime\prime}$ is a sum of two equal terms (each a product of two 
terms in opposite orders), so the sum is $0$ since we are in characteristic $2$.

The second case is non-trivial. Assuming, without loss of generality, that
the edges $\{x,y\}$, $\{y,z\}$ and the edge $\{x,z\}$ of height $0$ form 
a face $v$, and if the other bounded face $u$ is bounded by the two 
$\{x, z\}$ edges, then (identifying vertices and faces with
their corresponding variables), the formula we need to prove is
\beg{euxyv}{\begin{array}{l}
\displaystyle\left(\frac{1}{1+u}+\frac{1}{1+x}\right)
\left(\frac{1}{1+uv}+\frac{1}{1+y}
\right)+\\[4ex]
\displaystyle\left(\frac{1}{1+v^{-1}}+\frac{1}{1+x}\right)
\left(\frac{1}{1+uv}+\frac{1}{1+xy}
\right)+\\[4ex]
\displaystyle\left(\frac{1}{1+v}+\frac{1}{1+y}
\right)\left(\frac{1}{1+u}+\frac{1}{1+xy}\right)
=0
\end{array}
}
(the left hand side being the coefficient of \rref{epsipsit} at $T^{\prime\prime}$).
To verify \rref{euxyv}, notice that
$$\frac{1}{1+u}\frac{1}{1+uv}+\frac{v}{1+v}\frac{1}{1+uv}+\frac{1}{1+u}\frac{1}{1+v}=0,$$
$$\frac{1}{1+x}\frac{1}{1+y}+\frac{1}{1+x}\frac{1}{1+xy}+\frac{1}{1+xy}\frac{1}{1+y}=
\frac{1}{1+xy},$$
$$\begin{array}{l}
\displaystyle \frac{1}{1+u}\frac{1}{1+y}+\frac{1}{1+x}\frac{1}{1+uv}+\frac{1}{1+v^{-1}}
\frac{1}{1+xy}+\\[4ex]
\displaystyle 
\frac{1}{1+x}\frac{1}{1+uv}+\frac{1}{1+u}\frac{1}{1+y}+\frac{1}{1+xy}\frac{1}{1+v}=
\frac{1}{1+xy}.
\end{array}
$$
\qed

\vspace{3mm}

\section{The fundamental lemma}

\label{efund}

Recall that we assume \rref{lass}.
For a finite CW-complex $X$, note that
we have a canonical isomorphism between cellular chains and cellular cochains:
\beg{eccoch}{\diagram C_{k}^{cell}(X,\R)\rto^\cong
& C^{k}_{cell}(X,\R)
\enddiagram
}
which sends 
$$\sum_i \lambda_i e_i$$
for cells $e_i$ to the cochain whose value, on a $k$-cell $e$, 
is 
$$\sum_{i:e_i=e} \lambda_i.$$
We will treat this isomorphism as an identification. Note that such an identification
works over any subfield of $\R$, in particular over $\Q$. It does not, of course,
in general send cycles to cocycles, but it is important to note that it is 
independent of choice of orientation of cells (provided that we choose
the same orientation in homology and cohomology).

\label{slem}

\begin{lemma}
\label{l1}
(The Fundamental Lemma.) Suppose 
\beg{l1e1}{c:=\sum_{e_i\in E(\md)}\lambda_i e_i\in Z^{1}_{cell}(B(S^2),\R)
}
and also
\beg{l1e2}{
\tau c:=\sum_{e_i\in E(\md)}\lambda_i\tau( e_i)\in Z^{1}_{cell}(W(S^2),\R).
}
Then 
$$c=0\in C^{1}_{cell}(B(\mathcal{D}),\R).$$
\end{lemma}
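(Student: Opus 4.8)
The plan is to reinterpret the two hypotheses as saying that, regarded via \rref{eccoch} as a cellular $1$-chain (equivalently $1$-cochain) for the CW structure $B(S^2)$ on $S^2$, the element $c$ is simultaneously a cocycle \emph{and} a cycle, and then to finish with a short self-pairing argument in the style of Hodge theory, in which the reality of the coefficients $\lambda_i$ is what does the work.

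As noted earlier, $B(S^2)$ and $W(S^2)$ are dual CW decompositions of $S^2$: the $2$-cells of $B(S^2)$ are the white faces of $\md$, i.e. the vertices of $W(\md)$ (and dually for $W(S^2)$), while $e$ and $\tau(e)$ are dual $1$-cells. Hypothesis \rref{l1e1} is literally the statement $\delta c = 0$ in $C^2_{cell}(B(S^2),\R)$. Hypothesis \rref{l1e2} says that $\tau c$ is a cocycle of $W(S^2)$; since the $2$-cells of $W(S^2)$ are the black vertices of $B(\md)$, and the white $1$-cells bounding the $2$-cell dual to a black vertex $v$ are exactly the $\tau(e_i)$ with $e_i$ incident to $v$, this amounts, up to signs, to $\sum_i\lambda_i[\partial e_i:v]=0$ for every black vertex $v$, that is, to $\partial c = 0$ in $C_0^{cell}(B(S^2),\R)$ -- $c$ is a cellular $1$-cycle of $B(S^2)$. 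The point requiring care is that the convention by which $\tau$ rotates orientations $90^\circ$ counter-clockwise makes all of the relevant incidence signs match up with a single global sign, so that \rref{l1e2} is genuinely equivalent to ``$c$ is a $1$-cycle'' rather than something with stray signs; loops and multiple edges cause no difficulty, since a loop contributes $0$ both to $\partial c$ at its endpoint and to the relevant coboundary.

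Granting this, since $B(S^2)$ is a CW decomposition of $S^2$ we have $H_1(S^2,\R)=0$, so from $\partial c=0$ we may write $c=\partial\beta$ for a cellular $2$-chain $\beta$ (a real combination of white faces). Using the tautological pairing $C^1_{cell}\times C_1^{cell}\to\R$, for which $\langle c,c\rangle=\sum_i\lambda_i^2$, together with $\delta c=0$, we obtain
\[
\sum_i\lambda_i^2=\langle c,c\rangle=\langle c,\partial\beta\rangle=\langle\delta c,\beta\rangle=0.
\]
Since the $\lambda_i$ are real, this forces $\lambda_i=0$ for all $i$, i.e. $c=0$ in $C^1_{cell}(B(\md),\R)$, as claimed.

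I expect the sign bookkeeping in the middle paragraph -- making precise the Poincar\'e duality between $B(S^2)$ and $W(S^2)$ and verifying its compatibility with the $90^\circ$-rotation convention defining $\tau$, so that \rref{l1e2} really does translate into ``$c$ is a cycle'' -- to be the only genuine obstacle. The remaining ingredients (the identification \rref{eccoch}, the vanishing of $H_1(S^2,\R)$, and the positivity of sums of real squares) are formal.
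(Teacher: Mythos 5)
Your argument is correct, and it takes a genuinely different route from the paper's. Both proofs begin with the same geometric reinterpretation of \rref{l1e2}: the equation \rref{l1e4a} in the paper's proof is precisely the statement that $\partial c=0$ in $C_0^{cell}(B(S^2),\R)$, so the paper is also using the fact that, under Poincar\'e duality between $B(S^2)$ and $W(S^2)$, ``$\tau c$ is a $1$-cocycle of $W(S^2)$'' becomes ``$c$ is a $1$-cycle of $B(S^2)$.'' From there the two arguments diverge. The paper uses $H^1_{cell}(B(S^2),\R)=0$ to write $c=\delta u$ for a potential function $u$ on the black vertices, translates the cycle condition into the discrete mean-value (harmonicity) property \rref{l1e4}, and then invokes a discrete minimum principle (look at $m_C=\min u$; harmonicity forces neighbors of a minimizer to be minimizers, so $u$ is locally constant). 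You instead use the dual fact $H_1(S^2,\R)=0$ to write $c=\partial\beta$, and close with a Hodge-style self-pairing $\langle c,c\rangle=\langle c,\partial\beta\rangle=\langle\delta c,\beta\rangle=0$, which kills $c$ by positivity of $\sum\lambda_i^2$ over $\R$. Your version is shorter and entirely formal: once the dual-incidence signs are seen to match (which is the same bookkeeping step the paper has to perform to derive \rref{l1e4a}, and which you handle correctly, including the loop and multi-edge degeneracies), the rest is the adjointness of $\partial$ and $\delta$ plus positivity of real squares, and it does not invoke the maximum principle or explicit connectedness of $B(\md)$. The paper's route, by contrast, produces the harmonic potential $u$, which makes the mechanism slightly more transparent combinatorially and feeds directly into the way the weights are later thought of as a cochain. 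Both are fine; your pairing argument is the more streamlined.
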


\vspace{3mm}
\Proof
We have $H^{1}_{cell}(B(S^2),\R)=0$, so by \rref{l1e1},
there exists a function
$u:VB(\md)\r\R$ such that
\beg{l1e3}{\delta u=c.
}
Now the condition \rref{l1e2}, using \rref{l1e3}, translates to the equations
\beg{l1e4a}{\sum_{y:\{y,x\}\in EB(\md)} (u(y)-u(x))=0\;\text{for $x\in VB(\md)$},}
or 
\beg{l1e4}{u(x)=\frac{1}{\#(S_x)}\sum_{e\in S_x} u(y_e)
}
where 
$$S_x=\{e\in EB(\md)\;|\;\text{$e$ has vertices $x,y$}\}$$
and $e$ has vertices $x$ and $y_e$.
(The key observation is that, as one checks from the definitions, the summands of \rref{l1e4a} do not
change signs in dependence on orientation of edges.
Note also that \rref{l1e4} can be interpreted as a discrete analogue of $u$ being
``harmonic''.) 

Now \rref{l1e4} implies that $u$ is constant on connected components $C$ of
$B(\md)$ (actually, by our assumption, $B(\md)$ is connected). To see this,
consider
$$m_C=\min_{x\in VC} u(x).$$
By induction, we see that $u(y)=m_C$ for all $y\in VC$. This implies that
$c=\delta u=0$.
\qed

\vspace{3mm}
\begin{corollary}
\label{cl1}
The map 
\beg{emmap1}{\begin{array}{l}d\oplus\delta:C^{2}_{cell}(B(S^2),\Q)\oplus C_{0}^{cell}(B(S^2),\Q)\r
\\
\r C_{1}^{cell}(B(S^2),\Q)\cong C^{1}_{cell}
(B(S^2),\Q)
\end{array}}
is onto.
\end{corollary}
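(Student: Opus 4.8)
The plan is to deduce surjectivity of the map \rref{emmap1} from the Fundamental Lemma by a standard duality (transpose) argument. Working over $\Q$, all the cellular chain and cochain groups in sight are finite-dimensional $\Q$-vector spaces, and under the identification \rref{eccoch} the domain $C^{2}_{cell}(B(S^2),\Q)\oplus C_{0}^{cell}(B(S^2),\Q)$ and the codomain $C_{1}^{cell}(B(S^2),\Q)\cong C^{1}_{cell}(B(S^2),\Q)$ each carry a canonical basis (the cells), hence a canonical inner product making them self-dual. A linear map of finite-dimensional inner product spaces is surjective if and only if its adjoint (transpose) is injective, so it suffices to identify the adjoint of $d\oplus\delta$ and show it has trivial kernel.

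First I would pin down the adjoint. With respect to the cell bases, the transpose of the boundary map $d:C^{2}_{cell}(B(S^2),\Q)\to C_{1}^{cell}(B(S^2),\Q)$ is, up to the identification \rref{eccoch}, the coboundary $\delta:C^{1}_{cell}(B(S^2),\Q)\to C^{2}_{cell}(B(S^2),\Q)$ on the complex $B(S^2)$; this is just the statement that $\langle d\sigma,e\rangle=\langle \sigma,\delta e\rangle$ for a $2$-cell $\sigma$ and a $1$-cell $e$, which holds because both sides count incidences of $e$ in $\partial\sigma$ with the appropriate sign. Likewise, the transpose of $\delta:C_{0}^{cell}(B(S^2),\Q)\to C_{1}^{cell}(B(S^2),\Q)$ — which is literally the coboundary $C^{0}\to C^{1}$ read through \rref{eccoch} — is the boundary operator $d:C_{1}^{cell}(B(S^2),\Q)\to C_{0}^{cell}(B(S^2),\Q)$. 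Now here is the point where white enters: the coboundary map on $B(S^2)$ going $C^{1}\to C^{2}$ is, under the Poincaré duality between the CW-decompositions $B(S^2)$ and $W(S^2)$ (faces of $B(S^2)$ $\leftrightarrow$ vertices of $W(S^2)$, edges $\leftrightarrow$ edges via $\tau$), identified with the \emph{boundary} map $C_{1}^{cell}(W(S^2),\Q)\to C_{0}^{cell}(W(S^2),\Q)$. Hence the adjoint of \rref{emmap1} is the map $C_{1}^{cell}(B(S^2),\Q)\to C_{0}^{cell}(W(S^2),\Q)\oplus C_{0}^{cell}(B(S^2),\Q)$ sending a $1$-chain $c=\sum\lambda_i e_i$ to the pair consisting of the boundary of $\tau c$ in $W(S^2)$ and the boundary of $c$ in $B(S^2)$; equivalently, after identifying chains with cochains, it sends $c$ to $(\delta(\tau c),\,\delta c)$ where the first $\delta$ is the $W(S^2)$-coboundary and the second the $B(S^2)$-coboundary. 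Its kernel is exactly the set of $c$ with $\delta c=0$ and $\delta(\tau c)=0$, i.e.\ $c\in Z^{1}_{cell}(B(S^2),\Q)$ with $\tau c\in Z^{1}_{cell}(W(S^2),\Q)$.

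By the Fundamental Lemma (Lemma \ref{l1}) — whose proof goes through verbatim with $\R$ replaced by $\Q$, since only the field operations and the discrete maximum principle are used — any such $c$ is zero. Therefore the adjoint is injective and \rref{emmap1} is onto, completing the proof.

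I expect the only genuinely delicate point to be bookkeeping the two dualities correctly at once: the self-duality of cellular chains versus cochains via \rref{eccoch} (where one must check the pairings and sign conventions line up, using that \rref{eccoch} is orientation-independent), and the Poincaré duality $B(S^2)\leftrightarrow W(S^2)$ that turns the $1{\to}2$ coboundary on the black complex into a $1{\to}0$ boundary on the white complex. Once those identifications are set up carefully, the surjectivity is a formal consequence of Lemma \ref{l1}; there is no further computation.
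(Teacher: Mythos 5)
Your proof is correct and takes a genuinely different route from the paper's. The paper proceeds by direct dimension count: it uses Lemma \ref{l1} to pin down the kernel of $d\oplus\delta$ as the two one-dimensional spaces of $2$-cycles and $0$-cocycles, and then invokes Euler's formula $V-E+F=2$ for $S^2$ to see that the image has dimension equal to the number of edges, which is the dimension of the target. You instead dualize: using the self-duality of cellular chains and cochains furnished by \rref{eccoch}, you reduce surjectivity of $d\oplus\delta$ to injectivity of its transpose, identify the transpose as (up to the $\tau$-duality between $B(S^2)$ and $W(S^2)$) the map $c\mapsto(\delta_B c,\partial_B c)$, and observe that its kernel is precisely the set of $c$ that are simultaneously black $1$-cocycles and have $\tau c$ a white $1$-cocycle, which is zero by Lemma \ref{l1}. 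Both arguments hinge on the Fundamental Lemma, and the Euler characteristic of $S^2$ is implicitly present in your proof through the Poincar\'e duality machinery. Your approach trades the explicit count for an adjoint argument; the paper's is arguably a bit more transparent about where the topology of $S^2$ enters.

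One small blemish worth flagging: your ``equivalently'' sentence is not quite right as stated. The adjoint sends $c$ to the pair $(\partial_W(\tau c),\partial_B c)$, which in cochain language corresponds to $(\delta_B c,\delta_W(\tau c))$ --- note the components are swapped relative to your $(\delta_W(\tau c),\delta_B c)$. The identification of chains with cochains via \rref{eccoch} does not turn boundaries into coboundaries of the same complex; rather, Poincar\'e duality $B\leftrightarrow W$ turns a black boundary into a white coboundary. Since the kernel of a pair $(f,g)$ is the same as that of $(g,f)$, this does not affect your conclusion, but the wording would confuse a careful reader. The rest of the argument is sound.
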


\Proof
By Lemma \ref{l1}, the kernel of the map \rref{emmap1} is 
$$Z^{2}_{cell}(B(S^2),\Q)\oplus Z_{0}^{cell}(B(S^2),\Q)\cong \Q^2.$$
Thus, the dimension of its image is equal to the number of faces plus number of vertices 
minus $2$, which is equal to the number of edges by the fact that the Euler characteristic
of $S^2$ is $2$.
\qed

\vspace{3mm}

\noindent
{\bf Proof of Proposition \ref{elll}:}
By Corollary \ref{cl1}, there exist natural numbers $n_e$
such that the field $F$ contains 
\beg{eppowers}{(q_{e})^{n_e}}
for each edge $e$ of $B(S^2)$. By assumption, the variables \rref{eppowers} are
algebraically independent, so the transcendence degree of $F$ over $\mathbb{F}_2$
is at least equal to the number of edges of $B(S^2)$, which is equal to the number
of the variables $u_f$ and $z_v$ with one face and one vertex omitted. Therefore,
those variables must all be algebraically independent (and in fact, equality in the
transcendence degree must arise).
\qed

\vspace{3mm}
\noindent
{\bf Remark:} It would be interesting to know if the assumptions of
Theorem \ref{t1p} regarding algebraic independence of variables
can be further weakened. For example, Baldwin and Levine in their
paper \cite{bl} are able to work over any variables which do not
satisfy a certain specific relation, which allows them ultimately to
work over the field of rational functions in a single variable.
This would be very interesting to know also in our present
setting, since it would make BOS cohomology much more computable.
Unfortunately, the only result that is easily seen in the present
setting is the following

\vspace{3mm}
\begin{proposition}
\label{generic}
Let $K$ be any field of characteristic $2$ with elements $u_{f}^{\prime}$, $z_{v}^{\prime}$ such that
the expression \rref{epsip} makes sense with $u_f$ replaced by $u_{f}^{\prime}$
and $z_v$ replaced by $z_{v}^{\prime}$ (i.e. the denominators are non-zero). Denote
this expression by $\Psi^\prime$. Let, further,
$F$ with elements $u_f$, $z_v$ satisfy the Condition \rref{cass}.  Then we have
\beg{egeneric}{\begin{array}{l}\rank_F(H^i(C(\mathcal{D},F,(u_f),(z_v)),\Psi))\leq \\
\rank_K(H^i(C(\mathcal{D},K,(u_{f}^{\prime}),(
z_{v}^{\prime})),\Psi^\prime)).\end{array}}
\end{proposition}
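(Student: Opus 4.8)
The statement to prove is the semicontinuity inequality \rref{egeneric}: the rank of the BOS cohomology over a ``generic'' field $F$ (one satisfying \rref{cass}) is a lower bound for the rank over any other admissible field $K$. The natural approach is to view $\Psi$ as a matrix over a polynomial (or rational function) ring and use the standard specialization principle for ranks of matrices: the rank of a matrix with entries in a field can only drop under specialization of parameters, never rise. I would set this up as follows. First I would pass to the universal situation. Let $R = \mathbb{F}_2[\,u_f, z_v\,]_{(\text{localized})}$ — more precisely, the subring of the rational function field $\mathbb{F}_2(u_f, z_v)$ (with one face and one vertex omitted, so that \rref{erel1}, \rref{erel2} are used to define the remaining variables, and the variables algebraically independent) in which all the finitely many denominators $1 + \alpha(T,T')$, $1+\beta(T,T')$ appearing in \rref{epsip} are inverted. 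Over $R$ the differential $\Psi$ is a well-defined matrix, and $F = \mathbb{F}_2(u_f, z_v)$ is its fraction field, so $\rank_F$ of the cohomology is computed from the ranks of $\Psi$ (in consecutive degrees) over $F$.

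Next I would observe that any admissible $(K, u_f', z_v')$ determines a ring homomorphism. Indeed, since the denominators are assumed non-zero in $K$, the assignment $u_f \mapsto u_f'$, $z_v \mapsto z_v'$ extends to a homomorphism $\phi: R \to K$ (the localization is exactly what makes this work — we only inverted elements that remain invertible in $K$). Applying $\phi$ entrywise to the matrices of $\Psi$ over $R$ yields precisely the matrices of $\Psi'$ over $K$. Now I invoke the rank-drop lemma: for a matrix $M$ over $R$ and a homomorphism $\phi: R \to K$, $\rank_K(\phi(M)) \leq \rank_{\mathrm{Frac}(R)}(M)$, because a nonvanishing $r\times r$ minor of $M$ over $\mathrm{Frac}(R)$ is a nonzero element of $R$, whose image under $\phi$ may or may not vanish, but a nonvanishing minor of $\phi(M)$ of size $r$ pulls back to a nonvanishing minor of $M$ of size $r$. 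Hence $\rank_K(\Psi'_d) \leq \rank_F(\Psi_d)$ for each $d$.

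Finally I would assemble these rank inequalities into the cohomology inequality. Writing $c_d$ for the dimension of the degree-$d$ term of $C(\mathcal{D})$ (the same integer for both $F$ and $K$, as it only counts spanning trees), we have $\rank_F H^i = c_i - \rank_F(\Psi_{i}) - \rank_F(\Psi_{i-1})$ and likewise over $K$. Since $\Psi\Psi = 0$ (Lemma \ref{lpsipsi1}, valid over any admissible field), these are honest cohomology dimensions. The inequality $\rank_K(\Psi'_d) \le \rank_F(\Psi_d)$ for $d = i$ and $d = i-1$ then gives $\rank_K H^i(\Psi') \ge c_i - \rank_F(\Psi_i) - \rank_F(\Psi_{i-1}) = \rank_F H^i(\Psi)$, which is \rref{egeneric}. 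I should take a moment to note that Corollary \ref{cl1} / Proposition \ref{elll} and the discussion in the Remark following it guarantee that working over $\mathbb{F}_2(u_f,z_v)$ really does satisfy \rref{cass}, so the left-hand side of \rref{egeneric} is unchanged if we compute it there; this is where the earlier machinery is used.

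**Main obstacle.** The genuinely delicate point is not the matrix-rank argument, which is routine, but making sure the homomorphism $\phi: R \to K$ is actually well-defined: this requires that every denominator inverted in forming $R$ specializes to a unit in $K$, which is exactly the hypothesis ``the expression \rref{epsip} makes sense'' for $(K, u_f', z_v')$ — so one must be careful to invert \emph{only} those elements and not, say, accidentally localize at something that could vanish in $K$. A secondary subtlety is bookkeeping with the relations \rref{erel1}, \rref{erel2}: one must fix once and for all which face and which vertex are omitted, express the omitted $u_f$, $z_v$ as (Laurent) monomials in the rest, and check that $\phi$ respects this — but since $K$ also satisfies \rref{erel1}, \rref{erel2} with the primed variables, this is automatic. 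Beyond these, everything is formal.
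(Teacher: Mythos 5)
Your proof is correct, and it is genuinely different in presentation from the paper's. The paper proves Proposition~\ref{generic} by the \emph{cancellation} technique (attributed to Baldwin): one draws, in parallel, the graphs $\Gamma_F$ and $\Gamma_K$ of the two complexes, repeatedly cancels along an edge $(x,y)$ that is present in $\Gamma_K$ — which, by the observation that a rational function nonzero at a specialization is nonzero over the field of rational functions, is guaranteed also to be present in $\Gamma_F$ — and observes that once $\Gamma_K$ is edge-free, the surviving vertices compute $H^*(K)$, while the same set of vertices in the possibly-still-edged $\Gamma_F$ only gives an upper bound for $H^*(F)$. Your argument instead packages the very same observation (nonvanishing minors specialize from nonvanishing minors) as the standard semicontinuity of matrix rank under a ring homomorphism $\phi:R\to K$, and then uses $\dim H^i = c_i - \rank(\Psi_i) - \rank(\Psi_{i-1})$. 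What your route buys is brevity and conceptual transparency: it replaces the explicit combinatorial recursion by a one-line linear-algebra fact, and it sidesteps the need to track the bookkeeping of the cancellation procedure. What the paper's route buys is that cancellation is the tool they use throughout the paper anyway (e.g.\ in the $R_3$ analysis), so the argument fits their toolkit; it also makes explicit \emph{which} generators survive, which is occasionally useful. Both proofs ultimately reduce to the same key lemma, and your identification of the ``main obstacle'' — constructing the specialization homomorphism $\phi$ by inverting exactly the finitely many denominators of $\Psi$, which is licensed by the hypothesis that $\Psi'$ makes sense over $K$ — is exactly the right thing to worry about and is handled correctly.
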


\Proof
We invoke the method of {\em cancellation} for computing 
cohomology of (co)chain complexes of vector spaces over a field $F$, which
was communicated to the first author by John Baldwin: Let us suppose $C$ is
a cochain complex with (homogeneous) basis $B$. Draw an oriented graph $\Gamma$
whose set of vertices is $B$ and there is an edge from $x$ to $y$ if the coefficient
of the differential from $x$ to $y$ is non-zero. We decorate the edge by the coefficient
of the differential, which we denote by $c_{(x,y)}$. By definition, $c_{(x,y)}$ is zero
if and only if there is no edge from $x$ to $y$.

Now a step of cancellation is performed by considering an oriented edge $(x,y)$ in $\Gamma$.
Then modify our data by erasing the vertices $x$, $y$ (and all adjacent edges) and for
pair of edges $(z,y)$ and $(x,t)$,
subtract the quantity $c_{(z,y)}c_{(x,y)}^{-1}c_{(y,t)}$ from $c_{z,t}$ (note that this
may involve erasing or creating an edge). Then the resulting complex has isomorphic
cohomology by an easy short exact sequence argument. When no more edges
are left, we have a basis of a vector space isomorphic to the cohomology of $C$.

Now in our situation, let us draw side by side the graphs $\Gamma_F$, $\Gamma_K$ of the cochain complexes
$C(\mathcal{D},F,(u_f),(z_v),\Psi)$ and $C(\mathcal{D},K,(u_{f}^{\prime}),(
z_{v}^{\prime}),\Psi^\prime)$. Write the coefficients $c_{(x,y)}$ of $\Gamma_F$ and
$\Gamma_K$ in terms of the
variables $u_f, z_v$ and $u_{f}^{\prime}$, $z_{v}^{\prime}$, respectively.
Then it follows from our assumptions that
whenever there is an edge $(x,y)$ in $\Gamma_K$, there is a corresponding edge in $\Gamma_F$.
This is because when a rational function in $z_1,...,z_n$ is defined and non-zero for some elements $z_1,...,z_n$
of a field of characteristic $2$, then it is also (defined and) non-zero in the field
$\F_2(z_1,\dots,z_n)$.
Therefore, we can perform cancellation on the edge $(x,y)$ in both graphs. Eventually, we will be left
with a situation where the graph obtained from $\Gamma_K$ has no edges (while the
graph obtained from $\Gamma_F$ may or may not have edges. The statement of the
Proposition follows.
\qed

\vspace{5mm}
\section{Reidemeister 1 and 2}
\label{sr2}

\vspace{3mm}

Next, we shall prove that BOS cohomology is invariant
under the three Reidemeister moves (see Figure 4). 
Note first that if a generic projection
$\md^\prime$ is obtained from a generic projection $\md$ by performing
a Reidemeister 1 move creating a new crossing, then we either added a
new vertex $v$ and an edge $e$ originating in $v$ to the black graph, or
a loop $f$ without adding a new vertex. The complex
$(C(\md^\prime),\Psi)$ is therefore isomorphic to $(C(\md),\Psi)$
up to shift of degrees. To compute the shift of degrees, note
that height of corresponding states
increases by $1$ if and only if $e$ has height $1$ or $f$ has
height $0$; otherwise, heights of corresponding states stay the same as
in $C(\md)$. However, by our conventions, the first case arises if and only if
the new crossing was negative. Thus, by the formula \rref{edegree}, the degree
of corresponding states remains unchanged in either case.

\vspace{3mm}

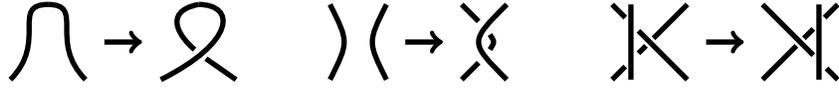
\begin{figure}
\label{freide}
\begin{tikzpicture}[line width=2pt]
\draw (0,0) .. controls (.5,.5) and (0,1) .. (.5,1);
\draw (.5,1) .. controls (1,1) and (.5,.5) .. (1,0);

\draw[->] (1.25,.5) -- (1.75,.5);

\draw (2,0) .. controls (3,.5) and (3,1) .. (2.5,1);
\draw (2.5,1) .. controls (2,.9) and (2.275,.540625) .. (2.45,.4);
\draw (2.6,.279462857) .. controls (3,0) .. (3,0);

\draw (4.25,1) .. controls (4.5,.5) .. (4.25,0);
\draw (5,1) .. controls (4.75,.5) .. (5,0);

\draw[->] (5.25,.5) -- (5.75,.5);

\draw (6,1) .. controls (6.1,.9) .. (6.225,.775);
\draw (6.375,.6) .. controls (6.44,.5) .. (6.375,.4);
\draw (6,0) .. controls (6.1,.1) .. (6.225,.225);

\draw (6.6,1) .. controls (6.1,.5) .. (6.6,0);

\draw (8.25,0) -- (8.25,1);
\draw (8,1) -- (8.175,.825);
\draw (8.35,.65) -- (9,0);
\draw (8,0) -- (8.175,.175);
\draw (8.35,.35) -- (8.45,.45);
\draw (8.55,.55) -- (9,1);

\draw[->] (9.25,.5) -- (9.75,.5);

\draw (10.75,1) -- (10.75,0);
\draw (10,1) -- (10.675,.325);
\draw (10.85,.15) -- (11,0);
\draw (10,0) -- (10.45,.45);
\draw (10.55,.55) -- (10.675,.675);
\draw (10.825,.825) -- (11,1);

\end{tikzpicture}
\caption{The three Reidemeister moves 1,2, and 3, respectively}
\end{figure}

\vspace{3mm}

Let us now turn to the Reidemeister 2 move.
Let $\md^\prime$ be the projection after a Reidemeister 2 move. By the isomorphism
of black and white complexes, we may assume that the number of black vertices
increases by 2. More precisely, there exists a vertex
$$u\in VB(\md)$$
such that
$$VB(\md^\prime)=(VB(\md)\smin \{u\})\amalg\{u_1,u_2,v\}.$$
Additionally, if $S$ is the set of all edges in $EB(\md)$ adjacent to $u$,
there exists a decomposition
$$S=S_1\amalg S_2$$
such that for every edge in $S_i$ with vertices $u,w$, there is an edge in
$EB(\md^\prime)$ with vertices $u_i,w$, $i=1,2$. Additionally, for every edge
$e\in EB(\md)$ neither vertex of which is $u$, $e\in EB(\md^\prime)$, and
we also have edges
$$\{u_i,v\}\in EB(\md^\prime)$$
where $\{u_i,v\}$ has height $i-1$. Finally, $EB(\md^\prime)$ contains no
other edges other than specified above (see Figure 5 - thick solid means
height $1$, thick dashed means height $0$, thin dashed means
unspecified height).  Note that we have a bijection
$$\phi:EB(\md)\r EB(\md^\prime)\smin\{\{u_1,v\},\{u_2,v\}\}$$
which sends $\{z,t\}$ to itself for $z,t\neq u$ and $\{z,u\}$ to the
appropriate $\{z,u_i\}$. Furthermore, $\phi$ preserves height. The main
purpose of this section is to prove the following

\vspace{3mm}

\begin{figure}
\begin{tikzpicture}

\fill (-.75,1) circle (2.5pt);
\fill (0,1) circle (2.5pt);
\fill (.75,1) circle (2.5pt);
\fill (0,0) circle (2.5pt);
\fill (-.75,-1) circle (2.5pt);
\fill (0,-1) circle (2.5pt);
\fill (.75,-1) circle (2.5pt);

\draw (0,0) node[anchor=east]{$u$};
\draw (-1,1.25) node[anchor=east]{$S_1$};
\draw (-1,-1.25) node[anchor=east]{$S_2$};

\draw[loosely dotted][line width=1.5pt] (-.5,1.5) -- (.5,1.5);

\draw[dashed] (-.75,1) -- (0,0) -- (.75,-1);
\draw[dashed] (0,1) -- (0,0) -- (0,-1);
\draw[dashed] (.75,1) -- (0,0) -- (-.75,-1);

\draw[loosely dotted][line width=1.5pt] (-.5,-1.5) -- (.5,-1.5);

\fill (4,1) circle (2.5pt);
\fill (4,0) circle (2.5pt);
\fill (4,-1) circle (2.5pt);
\fill (3.25,2) circle (2.5pt);
\fill (4.75,2) circle (2.5pt);
\fill (3.25,-2) circle (2.5pt);
\fill (4.75,-2) circle (2.5pt);
\fill (4,2) circle (2.5pt);
\fill (4,-2) circle (2.5pt);

\draw (4,1) node[anchor=east]{$u_1$};
\draw (4,0) node[anchor=east]{$v$};
\draw (4,-1) node[anchor=east]{$u_2$};
\draw (3,2.25) node[anchor=east]{$S_1$};
\draw (3,-2.25) node[anchor=east]{$S_2$};

\draw[loosely dotted][line width=1.5pt] (3.5,2.5) -- (4.5,2.5);

\draw[dashed][line width=2pt] (4,1) -- (4,0);
\draw[line width=2pt] (4,0) -- (4,-1);
\draw[densely dashed] (3.25,2) -- (4,1);
\draw[densely dashed] (4.75,2) -- (4,1);
\draw[densely dashed] (4,2) -- (4,1);
\draw[densely dashed] (3.25,-2) -- (4,-1);
\draw[densely dashed] (4.75,-2) -- (4,-1);
\draw[densely dashed] (4,-2) -- (4,-1);

\draw[loosely dotted][line width=1.5pt] (3.5,-2.5) -- (4.5,-2.5);

\draw (0,-3) node[anchor=north]{$B(\mathcal{D})$};
\draw (4,-3) node[anchor=north]{$B(\mathcal{D}')$};

\end{tikzpicture}
\caption{}
\end{figure}

\begin{proposition}
\label{pr2}
The chain complexes $(C(\md),\Psi)$, $(C(\md^\prime), \Psi)$, taken over fields
satisfying the assumption \rref{cass} for the respective link projections, have
cohomology groups of equal rank. 
\end{proposition}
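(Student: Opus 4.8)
The plan is to exhibit an explicit chain homotopy equivalence between $(C(\md^\prime),\Psi)$ and $(C(\md),\Psi)$, possibly up to a shift of degree which I would track via the formula \rref{edegree}. The starting point is the combinatorial description of how the spanning trees of $B(\md^\prime)$ are built from those of $B(\md)$. Using the bijection $\phi:EB(\md)\r EB(\md^\prime)\smin\{\{u_1,v\},\{u_2,v\}\}$ and the fact that $\phi$ preserves height, each spanning tree $T$ of $B(\md)$ gives rise to a small family of spanning trees of $B(\md^\prime)$, depending on how one reconnects the new vertices $v$, $u_1$, $u_2$: one may use the height-$0$ edge $\{u_1,v\}$, or the height-$1$ edge $\{u_2,v\}$, or one of the edges of $\phi(S_1)$ together with one of $\phi(S_2)$, subject to remaining a tree. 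So $K(\md^\prime)$ decomposes into blocks indexed (non-uniquely) by $K(\md)$, and the first step is to organize this bookkeeping carefully: classify the spanning trees of $B(\md^\prime)$ according to which of $\{u_1,v\}$, $\{u_2,v\}$ they contain (both, or exactly one).

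The second step is to identify a large acyclic subcomplex or, equivalently, to run Baldwin's cancellation method (recalled in the proof of Proposition \ref{generic}) on a distinguished collection of edges of the differential graph $\Gamma_{\md^\prime}$. The natural candidate is the following: a spanning tree $T^\prime$ of $B(\md^\prime)$ that contains \emph{both} new edges $\{u_1,v\}$ (height $0$) and $\{u_2,v\}$ (height $1$) can be paired, by swapping the height-$0$ edge $\{u_1,v\}$ for the height-$1$ edge $\{u_2,v\}$ — wait, that is the wrong move; rather, such a $T^\prime$ has a height-$0$ edge $\{u_1,v\}$ and one pairs it against the spanning tree obtained by the elementary move replacing $\{u_1,v\}$ by whichever height-$1$ edge completes the relevant circuit. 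The point is that the corresponding coefficient of $\Psi$ — computed from the circuit through $v$ using the formulas for $A$, $B$ (equivalently $\alpha$, $\beta$) — should be a \emph{unit} in $F$, because the relevant face and vertex variables are algebraically independent by \rref{cass}, so $\alpha\ne 1$, $\beta\ne 1$, and moreover $\tfrac{1}{1+\alpha}+\tfrac{1}{1+\beta}\ne 0$ for generic values. Cancelling all such edges removes from $\Gamma_{\md^\prime}$ every basis element involving the configuration that has no counterpart in $K(\md)$, and what remains should be exactly $\Gamma_{\md}$ (up to the degree shift), with the differential matching $\Psi$ on $C(\md)$ after the cancellation corrections have been applied. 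Verifying that those correction terms reproduce exactly the coefficients $\tfrac{1}{1+\alpha(T,T^\prime)}+\tfrac{1}{1+\beta(T,T^\prime)}$ of the original $\Psi$ is the computational heart of the argument, and it is here that the M\"obius-transformation identities over characteristic $2$ — of the same flavour as \rref{eklsign} and the three identities used to prove \rref{euxyv} — will be needed: the circuit in $B(\md^\prime)$ through $v$ differs from the corresponding circuit in $B(\md)$ only by the two new edges, whose weights enter the new $\alpha$, $\beta$ in a controlled way, and one must check the sum collapses correctly.

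The degree-shift check is routine and parallel to the Reidemeister 1 discussion: a Reidemeister 2 move adds two crossings of opposite sign, so $n_+$ and $n_-$ both increase by $1$, and one confirms via \rref{etreeh} and \rref{edegree} that the surviving basis elements of $C(\md^\prime)$ sit in the same degree as their images in $C(\md)$; hence no shift is actually needed and the ranks $\rank_F H^i$ agree on the nose. Finally I would address the hypothesis bookkeeping: the two projections have different black graphs, hence different face and vertex sets, so ``the field $F$ satisfying \rref{cass}'' means different conditions for $\md$ and $\md^\prime$; by the Remark following Proposition \ref{elll} and the flatness observation there, it suffices to prove the rank equality for \emph{one} convenient choice of $F$ on each side (e.g. rational function fields), and the cancellation argument produces a dictionary between the two sets of variables — the new variables $q_{\{u_1,v\}}$, $q_{\{u_2,v\}}$ being precisely the ones eliminated — so algebraic independence is preserved throughout.

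\textbf{Main obstacle.} The hard part will be the explicit verification that, after cancelling the ``extra'' basis elements, the induced differential on the surviving complex equals $\Psi$ on $C(\md)$ coefficient-by-coefficient. This is not formal: one has to compare circuits (and the vertex-partition data defining $\beta$) in $B(\md^\prime)$ with those in $B(\md)$, track how the two new edge-weights enter, and then show that a sum of products of terms $\tfrac{1}{1+(\,\cdot\,)}$ — arising from the cancellation corrections plus the ``direct'' contributions — telescopes to the single expected term. Getting the circuit orientations and the base-point component right (cf. Proposition \ref{ppsym}) so that the characteristic-$2$ M\"obius identities actually apply is where the real care is required; the rest is bookkeeping.
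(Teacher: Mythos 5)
Your plan starts in the right place: filter (or equivalently, run cancellation) by the height contributions of the two new edges $\{u_1,v\}$, $\{u_2,v\}$, pair off the trees in $K_1$ and $K_2$, and be left with the trees in $L$ (those containing both new edges), which are in natural bijection with $K(\md)$. This is exactly the Second spectral sequence of the paper, and your degree-shift discussion via $\iota$ is also correct. Where the plan breaks down is in the expectation that, after cancellation, "the differential match[es] $\Psi$ on $C(\md)$" coefficient-by-coefficient, possibly after some M\"obius-identity telescoping of the correction terms. The paper explicitly points out that this is not what happens. In the notation of Lemma \ref{ltheta}, what cancellation produces on the surviving summand $Q\cong C(\md)$ is the deformed differential $d + i\eta^{-1}j$, where $d$ is the internal differential of $L$ (which \emph{is} a BOS differential with new variables inserted), but the perturbation $i\eta^{-1}j$ is a genuine new term: $i$ and $j$ are the components of $\Psi$ into and out of the middle filtration piece, and $\eta$ is the invertible $d_0$. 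The paper stresses that the two summands of $d + i\eta^{-1}j$ do not commute and that there is "no a priori reason to suspect any connection" between $d + i\eta^{-1}j$ and $\Psi_{\md}$; in particular the correction terms do not telescope away to give the expected coefficient.

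The missing idea is the paper's detour through the \emph{skein} resolution $\mathcal{E}$ and a second R2 move $\mathcal{E}^\prime$ on $\mathcal{E}$. Filtering $C(\mathcal{E}^\prime)$ the same way produces the roles of $Q$ and $U$ reversed, so that the analogue of \rref{edprime} now reads $\xi d \xi^{-1} = d^{\prime\prime} + i^\prime j^\prime \xi^{-1}$, i.e. it expresses a conjugate of a BOS differential as a BOS differential plus a perturbation of exactly the same shape as $i\eta^{-1}j$. Matching the two perturbations forces specific algebraic relations among the four new edge variables (Lemma \ref{lxyzt}: $x=t^2$, $y=t^{-1}$, $z=t^{-2}$), which temporarily violates the algebraic-independence hypothesis \rref{cass}. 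The final, and essential, step is Proposition \ref{pxyzt}, which builds a larger field $F^{\prime\prime}$ and two embeddings so that the image of $d+i\eta^{-1}j$ equals the image of $\xi^{-1}d_Q\xi$; only then does one conclude the rank equality. None of this structure — the skein complex, the second R2 move, the forced algebraic relation and the field-embedding repair — is foreseen in your plan, and the hoped-for coefficient-by-coefficient telescoping would fail if attempted directly. The cancellation bookkeeping you set up is correct and useful, but it is only the starting point of the argument, not most of it.
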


\vspace{3mm}

To begin, there are two spectral sequences we may use to study
the complex $C(\md^\prime)$
(corresponding to two different decreasing filtrations).
We will discuss both, 
referring to them as the ``First'' and ``Second'' spectral sequence.
The First spectral sequence applies uniformly
to the Reidemeister 2 and Reidemeister 3 moves. In the case of the Reidemeister 2 move,
the Second spectral sequence
leads more clearly to the solution of the problem.
In the case of the Reidemeister 3 move, however,
the Second spectral sequence is not visible directly; we will use an analogue of
the First spectral sequence to reduce the problem to a situation where an
analogue of the Second
spectral sequence applies.

At this point, we assume we are working over any field of characteristic $2$
with elements $u_f$ and $z_v$
with respect to the projection $\md^\prime$
for which the differential \rref{epsip} makes sense (meaning that the denominators
are nonzero).

\vspace{3mm}
\noindent
{\bf The First spectral sequence:}
Denote by $F^pC(\md^\prime)$ the free $\Lambda$-module on all Kauffman states
of $\md^\prime$ (=spanning trees $T$ of $B(\md^\prime)$) for which 
\beg{r2e1}{
\sum_{e\in EB(\md):\phi(e)\in ET} h(e) +\sum_{e\in EB(\md):\phi(e)\notin ET}
(1-h(e))\geq p.
}
(Note that the left hand side is the formula for $h(T)$ modified by
{\em excluding} the terms for the edges $\{u_i,v\}$.) Then
$$\Psi F^pC(\md^\prime)\subseteq F^pC(\md^\prime)$$
(as the differential never decreases the height contribution of any single edge
of the black graph). 

Let us consider the spectral sequence associated with the filtration $F^p$.
To identify it, we need some additional notation. Let
$$K_i=\{T\in K(\md^\prime)\;|\;\{u_i,v\}\in ET,\;\{u_{2-i},v\}\notin ET\},$$
$$L=\{T\in K(\md^\prime)\;|\;\{u_i,v\}\in ET,\;i=1,2\}.$$
We have a bijection 
$$\diagram\kappa:K_1\rto^\cong & K_2\enddiagram$$
with
$$E(\kappa(T))=(E(T)\smin\{\{u_1,v\}\})\cup \{\{u_2,v\}\}.$$
Then clearly, for $T\in K_1$,
$$d_0(T)=c_T\kappa(T),\; c_T\neq 0\in \Lambda,$$
while for $T\in L$,
$$d_0(T)=0.$$
Now note that we have a canonical bijection
$$\iota:K(\md)\r L,$$
$$E\iota(T)=ET\cup \{\{u_1,v\},\{u_2,v\}\}.$$
Noting carefully that $\iota$ raises height by $1$, but the number
of negative crossings of $\md^\prime$ is also greater by $1$ than
the number of negative crossings of $\md$, we see that we have an isomorphism
of graded modules
\beg{ee1r2}{E_1\cong C(\md).}
However, we need to understand the bigrading. To this end, simply note that
the filtration degree of $\iota(T)$ is twice its degree minus $1$ plus the number of
negative crossings of $\md^\prime$, in other words,
\beg{r2ee1}{\text{$E_{1}^{p,q}\neq 0$ implies $p=2(p+q)+n_-(\md)$.}}
Thus, the $E_1$-term lies on a line of slope $-1/2$, and it follows that
the only possible differential is $d_2$, and the spectral sequence collapses
to $E_3$. 

Clearly, we can compute $d_2$, but it is important to note that despite 
the suggestive formula \rref{ee1r2}, $d_2$ is {\em not} simply an obvious modification
of $\Psi_\md$ by changing the field $F$: this is because of the
fact that a $d_2$ in a spectral
sequence associated to a decreasing filtration of a cochain complex is not computed
simply by applying the differential $d$ to a cocycle $c$ of the complex
of filtration degree $p$, even when $d_1$
is trivial: one must add
a counter-term to eliminate the summand of $d(x)$ in filtration
degree  $p+1$. In the present case, the differential $d_2$ is more cleanly computed
by the Second spectral sequence, which, in fact, leads directly to a stronger result.

\vspace{3mm}
\noindent
{\bf The Second spectral sequence:} Introduce another decreasing filtration $G^p$ on $C(
\md^\prime)$ defined by
\beg{r2e2}{
\sum_{e\in (EB(\md^\prime)\smin\phi(EB(\md)))\cap ET} h(e) +\sum_{e\in (EB(\md^\prime)
\smin\phi(EB(\md)))\smin ET}
(1-h(e))\geq p.
}
Roughly speaking, then, in the $G$-filtration, we are counting height contributions
of the edges $\{u_i,v\}$, i.e. exactly the edges not counted in the $F$-filtration.
We see that 
\beg{e3r2}{G^0(C(\md^\prime))=C(\md^\prime),\; G^3(C(\md^\prime))=0,
}
Thus, the associated graded cochain complex is non-trivial only in filtration
degrees $0,1,2$.

\vspace{3mm}

Let us begin by studying this situation in complete generality, and gradually
add information specific to $C(\md^\prime)$. Therefore, let us first consider
a cochain complex $\tilde{Q}$ with a decreasing filtration $G^i$ where
$G^0\tilde{Q}=\tilde{Q}$, $G^3 \tilde{Q}=0$. Let us denote the associated
graded pieces in filtration degrees $0,1,2$ by $U_0$, $Q$, $U_2$,
respectively. We shall assume we are working
in the category of $F$-modules where $F$ is a field of characteristic $2$.
In particular, since $F$ is a field, we may choose (arbitrarily)
splittings of the maps $G^i(\tilde{Q})\r G^i(\tilde{Q})/G^{i+1}(\tilde{Q})$.
After this choice, we see that the most general form $\tilde{Q}$ can take
is expressed in the following diagram:
\beg{euqu}{\diagram
U_0\drto^i\ddto_\eta&\\
&Q\dlto^j\\
U_2.&
\enddiagram
}
Here $U_0$, $Q$, $U_2$ are considered cochain complexes by the differential
on the associated graded pieces of $\tilde{Q}$, and the total differential 
is the sum of that differential and all applicable arrows of \rref{euqu}
(roughly, but note that not exactly, a totalization of a double complex).
The necessary and sufficient condition for this to work is that $i$, $j$ be
chain maps, and $\eta$ be a chain homotopy between $ji$ and $0$. 
Our convention (which we hope to justify later) is to denote the differentials
on $U_0$ and $Q$ by $d$, and the differential on $U_2$ by $d^\prime$, so the
homotopy condition reads
\beg{ethetah}{d^\prime\eta+\eta d=ji.
}
In the most general situation thus described, little can be said beyond
the spectral sequence associated with the filtration.

However, assume now also that
\beg{espec1}{\text{$\eta$ is an isomorphism of $\tilde{F}$-modules.}
}
With this special condition, we see immediately from \rref{ethetah} that
\beg{edprime}{d^\prime=\eta d\eta^{-1}+ji\eta^{-1}
}
(keep in mind that we are in characteristic $2$). However, we can say even
more:

\begin{lemma}
\label{ltheta}
Under the assumption \rref{espec1}, 
\beg{edeform}{d+i\eta^{-1}j} 
is a differential on $Q$, and $\tilde{Q}$ is quasiisomorphic to $(Q,d+i\eta^{-1}j)[1]$
(the square bracket denotes degree shift by the specified number).
\end{lemma}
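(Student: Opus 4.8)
The plan is to exhibit an explicit deformation retraction of $\tilde{Q}$ onto the complex $(Q,\,d+i\eta^{-1}j)$, thereby realizing the Lemma as an instance of Gaussian elimination (homological perturbation) cancelling the acyclic subquotient $U_0 \xrightarrow{\ \eta\ } U_2$. Using the chosen splittings, I write a general element of $\tilde{Q}$ as a triple $(a,b,c)$ with $a\in U_0$, $b\in Q$, $c\in U_2$, so that the total differential takes the lower triangular block form
$$D(a,b,c) = \bigl(\, da,\ ia+db,\ \eta a + jb + d^\prime c\,\bigr),$$
in which $i,j$ are chain maps, $\eta$ is a chain homotopy with $d^\prime\eta+\eta d=ji$, and, by \rref{edprime}, $d^\prime = \eta d\eta^{-1} + ji\eta^{-1}$. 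The single algebraic fact I will invoke repeatedly is the rearrangement $\eta^{-1}d^\prime = d\eta^{-1} + \eta^{-1}ji\eta^{-1}$ of the last relation.

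The heart of the argument is to write down the right maps and check they work. Set $d_Q := d + i\eta^{-1}j$ on $Q$ and define
$$g\colon Q\to\tilde{Q},\quad g(b)=(\eta^{-1}jb,\ b,\ 0);\qquad f\colon\tilde{Q}\to Q,\quad f(a,b,c)=b+i\eta^{-1}c;$$
$$h\colon\tilde{Q}\to\tilde{Q},\quad h(a,b,c)=(\eta^{-1}c,\ 0,\ 0).$$
Then I would verify, in order: (i) $g$ is a chain map from $(Q,d_Q)$ to $\tilde{Q}$ — the $Q$- and $U_2$-components of $Dg=gd_Q$ are immediate (the $U_2$-component vanishes because $jb+jb=0$ in characteristic $2$), and the $U_0$-component reduces to $d\eta^{-1}j=\eta^{-1}jd+\eta^{-1}ji\eta^{-1}j$, which follows from $jd=d^\prime j$ together with the formula for $d^\prime$; since $g$ is injective and $D^2=0$, this already shows $d_Q^2=0$, so $d+i\eta^{-1}j$ is genuinely a differential; (ii) $f$ is a chain map, the only nontrivial point being $i\eta^{-1}d^\prime=di\eta^{-1}+i\eta^{-1}ji\eta^{-1}$, which follows from $id=di$ and the formula for $d^\prime$; (iii) $fg=\mathrm{id}_Q$, which is immediate; and (iv) $Dh+hD=\mathrm{id}_{\tilde{Q}}+gf$, where again only the $U_0$-component requires work and collapses to precisely the rearranged homotopy relation $d\eta^{-1}+\eta^{-1}d^\prime=\eta^{-1}ji\eta^{-1}$ (characteristic $2$ is used throughout to cancel repeated terms). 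Together these exhibit $(Q,\,d+i\eta^{-1}j)$ as a deformation retract of $\tilde{Q}$ via the section $g$ and the homotopy $h$; in particular the two complexes are chain homotopy equivalent.

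Finally I would account for the degree shift in the statement: $Q$ is the associated graded piece in filtration degree $1$, and the grading convention used to form the total complex (the same one under which $E_1\cong C(\md)$ around \rref{ee1r2}) introduces an offset of $1$ between the cohomological degree of a class of $\tilde{Q}$ and its degree when regarded inside $(Q,d_Q)$; this is what produces the shift $[1]$. I do not anticipate a genuine obstacle here: once $f,g,h$ are guessed, all the verifications are short manipulations that funnel through the single identity $\eta^{-1}d^\prime=d\eta^{-1}+\eta^{-1}ji\eta^{-1}$, so the only real care needed is bookkeeping — confirming that the chosen splittings put $D$ in the stated block form, and tracking the degree shift correctly.
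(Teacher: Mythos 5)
Your argument is correct, and it takes a genuinely different route from the paper. The paper proves quasi-isomorphism by producing a short exact sequence of complexes $0 \to (Q, d+i\eta^{-1}j)[1] \xrightarrow{\tau} \tilde{Q} \xrightarrow{\mu} \overline{U} \to 0$, where $\tau(x) = x + \eta^{-1}j(x)$ and $\overline{U}$ is the acyclic cone on the isomorphism $\lambda$, then invokes the long exact sequence in cohomology. You instead produce an explicit deformation retraction $(f,g,h)$ of $\tilde{Q}$ onto $(Q,d_Q)$ — exactly the Gaussian-elimination / homological-perturbation picture obtained by cancelling the acyclic summand $U_0 \xrightarrow{\eta} U_2$. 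I checked your identities: $Dg = gd_Q$ reduces on the $U_0$-component to $d\eta^{-1}j = \eta^{-1}jd + \eta^{-1}ji\eta^{-1}j$ via $jd = d'j$ and \rref{edprime}; $d_Q f = fD$ reduces to $i\eta^{-1}d' = di\eta^{-1} + i\eta^{-1}ji\eta^{-1}$ via $id = di$; $fg = \mathrm{id}$; and $Dh + hD = \mathrm{id} + gf$ reduces to $d\eta^{-1} + \eta^{-1}d' = \eta^{-1}ji\eta^{-1}$, which is the rearranged homotopy relation — all correct in characteristic $2$. Your section $g$ coincides with the paper's $\tau$, so the two arguments are close cousins; what yours buys is a chain homotopy equivalence rather than just a quasi-isomorphism, and it avoids introducing the auxiliary complex $\overline{U}$ and $\lambda$. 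What the paper's version buys is slightly less computation (one avoids verifying that $f$ is a chain map and the homotopy identity). Your device of deducing $d_Q^2 = 0$ from $D^2 = 0$ and injectivity of $g$ is clean and matches the logical economy of the paper, which also handles $(d+i\eta^{-1}j)^2 = 0$ as a separate short computation. Both you and the paper treat the degree shift $[1]$ lightly; your one-sentence acknowledgment of the filtration/grading bookkeeping is at the same level of rigor as the source.
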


\Proof
We have
$$\begin{array}{l}(d+i\eta^{-1}j)(d+i\eta^{-1}j)=dd+di\eta^{-1}j+
i\eta^{-1}jd+i\eta^{-1}ji\eta^{-1}j=\\
di\eta^{-1}j+
i\eta^{-1}jd+i \eta^{-1}\eta d\eta^{-1}j + i\eta^{-1}d^\prime j=0.
\end{array}
$$
A chain map 
$$\tau: (Q,d+i\eta^{-1}j)[1]\r \tilde{Q}$$
is defined by
$$\tau(x)=x+\eta^{-1}j(x).$$
To see that $\tau$ is a chain map, compute
$$\begin{array}{l}
\tau(dx+i\eta^{-1}jx)=
dx+\eta^{-1}jdx+i\eta^{-1}jx+\eta^{-1}ji\eta^{-1}j(x)=\\
dx+\eta^{-1}\eta d\eta^{-1}jx +\eta^{-1}ji\eta^{-1}jx=\\
dx+d\eta^{-1}jx,
\end{array}$$
while
$$\begin{array}{l}
d_{\tilde{Q}}\tau(x)=dx+i\eta^{-1}jx +d\eta^{-1}j(x) +\eta\eta^{-1}jx+jx=\\
dx+d\eta^{-1}jx.
\end{array}
$$
Clearly, additionally, the map $\tau$ is injective. We claim that its cokernel
is isomorphic to $\overline{U}$, which is the totalization of the double complex 
$$\diagram
U_{2}^{\prime}\dto^{\lambda}\\
U_2
\enddiagram
$$
where $\lambda$ is an arbitrary chain isomorphism, 
which is clearly acyclic. To this end, we construct a chain map
$$\mu:\tilde{Q}\r\overline{U},$$
given as the identity on $U_2$, by
$$\mu(x)
=\lambda^{-1}\eta(x)
$$
for $x\in U_0$, and by
$$\mu(x)=\lambda^{-1}jx
$$
for $x\in Q$. To verify that $\mu$ commutes with the
differential on
$x\in U_0$, we have:
$$\begin{array}{l}
d\mu(x)=d\lambda^{-1}\eta(x)=\eta(x)+\lambda^{-1}d^\prime\eta(x)=\\
\eta(x)+\lambda^{-1}\eta d x +\lambda^{-1}jix=\mu (dx +ix +\eta x).
\end{array}
$$
To verify $\mu$ commutes with the differential on $x\in Q$, we have 
$$\begin{array}{l}
d\mu(x)=\lambda^{-1}d^\prime jx+jx=\lambda^{-1}jdx+jx=\mu(dx+jx).
\end{array}
$$
Now obviously the sequence of cochain complexes
$$\diagram
0\rto &(Q,d+i\eta^{-1}j)[1]\rto^(.8){\tau} & \tilde{Q}\rto^{\mu} &\overline{U}\rto & 0
\enddiagram
$$
is exact, which implies our statement by the long exact sequence in cohomology.
\qed

\vspace{3mm}
In the case $\tilde{Q}=C(\md^\prime)$, we choose the splitting so that
$U_0$, $Q$, $U_2$ are generated by $K_1$, $L$, $K_2$ respectively. 
At this point, let us introduce more specific assumptions about
the field we are working in. Let $F$ be a field satisfying the assumption
\rref{cass} for the link projection $\md$. We shall now produce a field $\tilde{F}$
with variables $u_f$, $z_v$ corresponding to the link projection
$\md^\prime$ satisfying \rref{erel1} and \rref{erel2}. We will not necessarily
assume that condition \rref{cass} is satisfied with $F$ replaced by $\tilde{F}$, but
we will require that \rref{epsip} be defined (i.e. that the denominators be non-zero). 
The field $\tilde{F}$ is constructed as follows:
To $F$, we
adjoin
two variables $x,y$ corresponding to the edges $(u_1,v)$, $(v,u_2)$
(and also the corresponding white edges).
We will allow the possibility of an algebraic relation between $x$ and $y$,
as long as \rref{epsip} will make sense, but we will assume that each of the variables
$x$, $y$ is algebraically independent from $F$.
In specifying black vertex and face variables for $C(\md)$ in $\tilde{F}$, 
our convention is that the element of
$\tilde{F}$ associated to a face $f$ in $B(\md^\prime)$ is equal to
the element associated with the corresponding face in $B(\md)$ (obtained by 
contracting the new edges), times any of the variables $x,y$ (or their inverses)
corresponding to any of the new edges $f$ may contain with the appropriate
orientations. The element of $\tilde{F}$ associated with a vertex $v$
in $B(\md^\prime)$ is by our convention
the corresponding vertex variable of $B(\md)$, times the product
of any of the variables $x, y$ or their inverses corresponding to the  edges
adjacent to $v$ in $\md^\prime$. The element associated with
the vertex $v$ is $xy^{-1}$. Examples of an allowable choice for $x$, $y$ 
are either two variables jointly algebraically independent over $F$, or powers
$t^m$, $t^n$ of a variable $t$ algebraically independent of $F$, as long
as $m,n\neq 0$, $m\neq -n$.

With these conventions, if $T$ is a spanning tree in $K_1$, adding $\{u_2,v\}$ to 
$T$ specifies a black circuit. If we denote the element associated with the corresponding
black circuit in $B(\md)$ by $b$, then
\beg{efeta}{\eta(T)=\left(\frac{1}{1+bxy}+\frac{1}{1+xy^{-1}}\right)\kappa(T).
}
The coefficient is non-zero, so \rref{espec1} holds.

\vspace{3mm}
It is not difficult to see by definition that
\beg{eapp12}{\parbox{3.5in}{$Q$ with the differential \rref{edeform} is isomorphic
to the $E_2$-term of the First spectral sequence.}
}
Despite its aesthetic appeal, however, Lemma \ref{ltheta} still does not solve
our problem: even though we have a chain complex isomorphic to $Q$ as $\tilde{F}$-modules,
there is no a priori reason to suspect any connection between the differentials
$d$ and \rref{edeform}: it is not even reasonable to call \rref{edeform} a deformation
of $d$, since in general the two summands of \rref{edeform} do not commute.

Here is where we need to bring in even more concrete information from the situation
at hand. As a warm-up, let us consider the differential $d^\prime$ on $U_2$ 
(see \rref{edprime}) instead
of the differential \rref{edeform} on $Q$. Notice that both differentials are
of similar form, a sum of two terms, one of which is related to a differential
we know ($d$ on $U_0$ in case of \rref{edprime} and $d$ on $Q$ in case of \rref{edeform}),
and the other is expressed as a composition of the maps $i,j$. 

In the case of \rref{edprime}, however, we do have another way of understanding
the differential $d^\prime$: Recalling that $U_2$ is isomorphic to $U_0$ as a
$\tilde{F}$-module by the bijection $\kappa$, and recalling our conventions
regarding $\tilde{F}$, one can see that both the differentials $d$ on $U_0$ and 
$d^\prime$ on $U_2$ are in fact the Baldwin-Ozsv\'{a}th-Szab\'{o} differential
with different choice of variables for a suitable
link projection. Consider first the summands of $\Psi$ which 
relate only trees in $K_i$ with a fixed $i$. The difference of
coefficients is only in white cycles which cross the edges $\{u_i,v\}$;
for a white circuit $w$ in $K_1$ crossing the edge $\{u_2,v\}$ and
associated element $\alpha$ in $C(\md^\prime)$, the element associated
with the corresponding white circuit in $K_2$ will be $\alpha$ multiplied by
$xy^{-1}$ or $yx^{-1}$, depending on the orientation. Both of these are,
in fact, forms of $\Psi$ with different choices of variables in $\tilde{F}$
for the projection $\mathcal{E}$ obtained by performing a skein move
instead on the two arcs on $\md$ involved in the R2 move we are
studying - see Figure 6. (More explicitly, $B(\mathcal{E})$ is
obained from $B(\md^\prime)$ by deleting the vertex $v$ and the edges
$\{u_i,v\}$.)

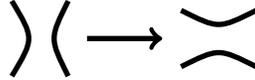
\begin{figure}[!ht]
\label{fskein}
\begin{tikzpicture}[line width=2pt]
\draw (-3,1) .. controls (-2.7,.5) .. (-3,0);
\draw (-2.25,1) .. controls (-2.5,.5) .. (-2.25,0);

\hspace{.1cm}
\draw (-.75,.125) .. controls (-.25,.375) .. (.25,.125);
\draw (-.75,.875) .. controls (-.25,.625) .. (.25,.875);

\draw[->] (-2,.5) -- (-1,.5);

\end{tikzpicture}
\caption{The skein move}
\end{figure}

Therefore, we know that
\beg{eisor2}{\parbox{3.5in}{If the variables $x$, $y$ are algebraically independent
over $F$, then the cohomology groups of $(U_0,d)$, $(U_2,d^\prime)$
have equal ranks.}} 
In fact, writing this down explicitly in terms of variables yields the following identity,
which will be useful later:

\begin{lemma}
\label{lakl}
In a field of characteristic $2$, we have the identity
\beg{eakl}{\begin{array}{l}\displaystyle\left(
\frac{1}{1+a}+\frac{1}{1+k}\right)^{-1}
\left(\frac{1}{1+a}+\frac{1}{1+\ell}\right)=\\
\displaystyle
\left(\frac{1}{1+\ell^{-1}k}+\frac{1}{1+k}\right)^{-1}
\left(\frac{1}{1+\ell^{-1}k}+\frac{1}{1+a^{-1}k}\right)
\end{array}
}
whenever the denominators are non-zero.
\end{lemma}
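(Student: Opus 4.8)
\textbf{Proof proposal for Lemma \ref{lakl}.}

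The plan is to verify the identity \rref{eakl} by a direct algebraic manipulation in a field of characteristic $2$, exploiting the fact that such fields satisfy $1+1=0$ and hence the many ``folding'' identities for expressions of the form $(1+t)^{-1}$ already used in this section (for instance \rref{eklsign}). First I would clear denominators: writing $P = \left(\frac{1}{1+a}+\frac{1}{1+k}\right)$ and $Q = \left(\frac{1}{1+\ell^{-1}k}+\frac{1}{1+k}\right)$, the claimed equality is equivalent to
\beg{ecleared}{P^{-1}\left(\frac{1}{1+a}+\frac{1}{1+\ell}\right) = Q^{-1}\left(\frac{1}{1+\ell^{-1}k}+\frac{1}{1+a^{-1}k}\right),}
i.e. to the cross-multiplied polynomial identity
$$Q\left(\frac{1}{1+a}+\frac{1}{1+\ell}\right)=P\left(\frac{1}{1+\ell^{-1}k}+\frac{1}{1+a^{-1}k}\right),$$
now involving no inverses of sums, only the ``atomic'' denominators $1+a$, $1+k$, $1+\ell$, $1+\ell^{-1}k$, $1+a^{-1}k$. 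The natural common denominator is a product of these atomic factors together with the monomials $a$, $k$, $\ell$ needed to rewrite $1+\ell^{-1}k = \ell^{-1}(\ell+k)$ and $1+a^{-1}k = a^{-1}(a+k)$; after this substitution everything becomes a rational function whose denominator is a monomial times $(1+a)(1+k)(1+\ell)(\ell+k)(a+k)$, and I would simply expand both sides as polynomials in $a,k,\ell$ over $\F_2$ and check they agree.

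A cleaner organization, which I would actually carry out, is to recognize that $P = \frac{1}{1+a}+\frac{1}{1+k} = \frac{a+k}{(1+a)(1+k)}$ and likewise $\frac{1}{1+a}+\frac{1}{1+\ell} = \frac{a+\ell}{(1+a)(1+\ell)}$, so the left-hand side of \rref{eakl} equals
$$\frac{(1+a)(1+k)}{a+k}\cdot\frac{a+\ell}{(1+a)(1+\ell)} = \frac{(1+k)(a+\ell)}{(a+k)(1+\ell)}.$$
Similarly $Q = \frac{1}{1+\ell^{-1}k}+\frac{1}{1+k}$; using $1+\ell^{-1}k = \frac{\ell+k}{\ell}$ this is $\frac{\ell}{\ell+k}+\frac{1}{1+k} = \frac{\ell(1+k)+(\ell+k)}{(\ell+k)(1+k)} = \frac{\ell k + k}{(\ell+k)(1+k)} = \frac{k(\ell+1)}{(\ell+k)(1+k)}$ (here I used $\ell+\ell = 0$). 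In the same way, $\frac{1}{1+\ell^{-1}k}+\frac{1}{1+a^{-1}k}$, with $1+a^{-1}k = \frac{a+k}{a}$, equals $\frac{\ell}{\ell+k}+\frac{a}{a+k} = \frac{\ell(a+k)+a(\ell+k)}{(\ell+k)(a+k)} = \frac{\ell k + a k}{(\ell+k)(a+k)} = \frac{k(\ell+a)}{(\ell+k)(a+k)}$. Hence the right-hand side of \rref{eakl} equals
$$\frac{(\ell+k)(1+k)}{k(\ell+1)}\cdot\frac{k(\ell+a)}{(\ell+k)(a+k)} = \frac{(1+k)(\ell+a)}{(\ell+1)(a+k)},$$
which is visibly the same as the left-hand side $\dfrac{(1+k)(a+\ell)}{(a+k)(1+\ell)}$.

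The only subtlety — and the step I expect to require the most care — is bookkeeping of which denominators are assumed non-zero: the hypothesis is that the \emph{displayed} denominators $1+a$, $1+k$, $1+\ell$, $1+\ell^{-1}k$, $1+a^{-1}k$, as well as the two outer sums $P$ and $Q$, are non-zero (and implicitly $a,\ell \neq 0$ so that $\ell^{-1}$, $a^{-1}$ make sense). One checks that $P \neq 0 \iff a \neq k$ and $Q \neq 0 \iff k \neq 0$ and $k \neq \ell + k \cdot 0$... more precisely $Q = \frac{k(\ell+1)}{(\ell+k)(1+k)}$ so $Q\neq 0 \iff k\neq 0$ and $\ell \neq 1$; and $1+\ell^{-1}k\neq 0 \iff \ell\neq k$, $1+a^{-1}k\neq 0\iff a\neq k$. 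Under exactly these conditions every cancellation performed above is legitimate, so the chain of equalities is valid and the lemma follows. I would present the computation essentially as the two displayed simplifications above, noting at each inversion which hypothesis guarantees the quantity being inverted is non-zero.
\qed
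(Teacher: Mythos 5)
Your proof is correct and follows essentially the same approach as the paper's: bring each of the two outer sums to a common denominator (using characteristic $2$ to turn $\frac{1}{1+a}+\frac{1}{1+k}$ into $\frac{a+k}{(1+a)(1+k)}$), simplify each side to a rational function, and observe they coincide. The only cosmetic difference is that you clear the $\ell^{-1}$, $a^{-1}$ factors early via $1+\ell^{-1}k=\ell^{-1}(\ell+k)$ etc., while the paper keeps them until the end and cancels a single monomial $a^{-1}\ell^{-1}k$; both yield $\frac{(1+k)(a+\ell)}{(1+\ell)(a+k)}$ on each side.
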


\Proof
Bringing the terms on the left hand side of \rref{eakl} to common multiplier,
we get
\beg{e1akl}{\frac{(1+k)(a+\ell)}{(1+\ell)(a+k)}.}
Doing the same on the right hand side gives
\beg{e2akl}{\frac{(1+k)(\ell^{-1}k+a^{-1}k)}{(1+a^{-1}k)(\ell^{-1}k+k)}.
}
Now \rref{e1akl} is gotten from \rref{e2akl} by dividing both
numerator and denominator by $a^{-1}\ell^{-1}k$.
\qed

\vspace{3mm}
Based on this, one could hope to apply an analogous principle to the differential
\rref{edeform} if we can somehow swap the roles of $Q$ and $U$. This, in fact, can
be done by considering an R2 move on the projection $\mathcal{E}$, and relating
all the new variables appropriately. Let $\mathcal{E}^\prime$ be the projection
obtained from $\mathcal{E}$ by an R2 move on the arcs related to the arcs of
$\md$ on which we performed the original R2 move by a skein move. Then 
$B(\mathcal{E}^\prime)$ is obtained from $B(\mathcal{E})$ by adding two new
edges $(u_1,u_2)$. Denote these edges by $e,f$ (see Figure 7, with the
same conventions as in Figure 5). Let, additionally, $h(e)=0$,
$h(f)=1$, and extend the field $\tilde{F}$ further into a field $F^\prime$
by attaching two new variables
$z,t$ associated with the edges $e$, $f$, respectively, each algebraically
independent of $F$. Our conventions regarding calculating the elements
$\alpha(T,T^\prime)$, $\beta(T,T^\prime)$ for $C(\mathcal{E}^\prime)$ are the
same as in the case of $C(\md^\prime)$: specifically, a black circuit $c$
in $B(\mathcal{E}^\prime)$ which does not contain any of the edges $e,f$ is
assigned the same element as in $C(\mathcal{E})$; if $c$ contains one or both
of the edges $e,f$, and if the corresponding circuit in $C(\md)$ (obtained by
contracting the edges $e,f$) is assigned an element $b$, then $c$ is assigned
the element $b$ multiplied by some of the elements $z$, $z^{-1}$, $t$, $t^{-1}$,
depending
on which of the edges $e$, $f$ $c$ contains, and
orientation. Regarding white circuits $w$, again, take the product of all the vertex
variables of $B(\md)$ inside (resp. outside) of $w$ depending on whether $w$ is
oriented counter-clockwise or clockwise, times, possibly, some of the elements
$z$, $z^{-1}$, $t$, $t^{-1}$, depending on which of the edges $e$, $f$ the circuit
$w$ crosses, and orientation. Our only assumption about the variables $z,t$ at this
point is that the differential \rref{epsip} make sense for the link projection
$\mathcal{E}^\prime$ (i.e. that the denominators be non-zero).

\begin{figure}
\begin{tikzpicture}

\fill (-.75,2) circle (2.5pt);
\fill (0,2) circle (2.5pt);
\fill (.75,2) circle (2.5pt);
\fill (0,1) circle (2.5pt);
\fill (-.75,-2) circle (2.5pt);
\fill (.75,-2) circle (2.5pt);
\fill (0,-2) circle (2.5pt);
\fill (0,-1) circle (2.5pt);

\draw (-1,2.25) node[anchor=east]{$s_1$};
\draw (0,1) node[anchor=east]{$u_1$};
\draw (0,-1) node[anchor=east]{$u_2$};
\draw (-1,-2.25) node[anchor=east]{$s_2$};

\draw[loosely dotted][line width=1.5pt] (-.5,2.5) -- (.5,2.5);

\draw[densely dashed] (-.75,2) -- (0,1);
\draw[densely dashed] (.75,2) -- (0,1);
\draw[densely dashed] (0,2) -- (0,1);
\draw[densely dashed] (-.75,-2) -- (0,-1);
\draw[densely dashed] (.75,-2) -- (0,-1);
\draw[densely dashed] (0,-2) -- (0,-1);

\draw[loosely dotted][line width=1.5pt] (-.5,-2.5) -- (.5,-2.5);

\fill (3.25,2) circle (2.5pt);
\fill (4,2) circle (2.5pt);
\fill (4.75,2) circle (2.5pt);
\fill (4,1) circle (2.5pt);
\fill (4,-1) circle (2.5pt);
\fill (3.25,-2) circle (2.5pt);
\fill (4.75,-2) circle (2.5pt);

\draw (3,2.25) node[anchor=east]{$s_1$};
\draw (4,1) node[anchor=east]{$u_1$};
\draw (3.7,0) node[anchor=east]{$e$};
\draw (4.3,0) node[anchor=west]{$f$};
\draw (4,-1) node[anchor=east]{$u_2$};
\draw (3,-2.25) node[anchor=east]{$s_2$};

\draw[loosely dotted][line width=1.5pt] (3.5,2.5) -- (4.5,2.5);

\draw[densely dashed] (3.25,2) -- (4,1);
\draw[densely dashed] (4.75,2) -- (4,1);
\draw[densely dashed] (4,2) -- (4,1);

\draw[line width=2pt] (4,1) .. controls (4.4,.5) and (4.4,-.5) .. (4,-1);
\draw[dashed][line width=2pt] (4,1) .. controls (3.6,.5) and (3.6,-.5) .. (4,-1);

\draw[densely dashed] (3.25,-2) -- (4,-1);
\draw[densely dashed] (4.75,-2) -- (4,-1);
\draw[densely dashed] (4,-2) -- (4,-1);
\fill (4,-2) circle (2.5pt);

\draw[loosely dotted][line width=1.5pt] (3.5,-2.5) -- (4.5,-2.5);

\draw (0,-3) node[anchor=north]{$B(\mathcal{E})$};
\draw (4,-3) node[anchor=north]{$B(\mathcal{E}')$};

\end{tikzpicture}
\caption{}
\end{figure}

Denote by $L^{\prime}_{1}$, resp. $L^{\prime}_{2}$ resp. $K^\prime$ the
sets of spanning trees of $B(\mathcal{E})$ which contain $e$ resp. $f$ resp.
neither $e$ nor $f$.
Now, analogously as above, filtering $C(\mathcal{E}^\prime)$ by
the total height contribution of the edges $e,f$ only, 
and performing the same analysis as we did for $C(\md^\prime)$, we see
that $C(\mathcal{E}^\prime)$ is isomorphic to a cochain complex
of the form
\beg{equq}{
\diagram
Q_0\drto^{j^\prime}\ddto_\xi&\\
& U\dlto^{i^\prime}\\
Q_2&
\enddiagram
}
where $Q_0$ resp. $Q_2$ resp. $U$ are generated by $L^{\prime}_{1}$ resp. 
$L^{\prime}_{2}$ resp. $K^\prime$. Once again, $\xi$ is an isomorphism of 
$F^\prime$-modules. Specifically if $T$ is a spanning tree in $L^{\prime}_{1}$
which, by deleting the edge $e$, creates a white circuit with associated
element $v$, and if 
$$\kappa^\prime:L^{\prime}_{1}\r L^{\prime}_{2}$$
is the canonical bijection (obtained by replacing the edge $e$ with the edge $f$),
then
\beg{efxi}{\xi(T)=\left(
\frac{1}{1+ztv}+\frac{1}{1+zt^{-1}}
\right)\kappa^\prime(T).
}
If we denote the differentials on $Q_0$, $U$ by $d$ and the differential on 
$Q_2$ by $d^{\prime\prime}$ (justified, again, by the idea that the last of
the three differentials must be distinguished while the others can be
understood from the context), \rref{edprime} translates to 
$$d^{\prime\prime}=\xi d\xi^{-1}+i^\prime j^\prime \xi^{-1},$$
or, equivalently,
\beg{edij}{\xi d\xi^{-1}= d^{\prime\prime}+i^\prime j^\prime \xi^{-1}.
}
Our strategy is to set up relations between the variables $x,y,z,t$
so that the right hand side of \rref{edij} is equal to \rref{edeform},
and 
\beg{edq}{\parbox{3.5in}{$d^{\prime\prime}=d_Q$ where $d_Q$ denotes
the original (unperturbed) differential on $Q$.}}
Then we know (analogously to \rref{eisor2}) that 
\beg{econclude}{\parbox{3.5in}{The differential \rref{edeform} is related to $\xi^{-1}d_Q\xi$
by a change of variables (i.e. by an automorphism of the field $F^\prime$).}
}

\vspace{3mm}
\begin{lemma}
\label{lxyzt}
The right hand side of \rref{edij} is equal to \rref{edeform} when 
\beg{esolution}{x=t^2,\; y=t^{-1},\; z=t^{-2}.}
\end{lemma}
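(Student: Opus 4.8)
The plan is to verify the claimed identity by an explicit but carefully organized computation, comparing the three ``columns'' of the two filtered complexes. The right hand side of \rref{edij} is $d^{\prime\prime}+i^\prime j^\prime\xi^{-1}$, a map on $Q_2$, while \rref{edeform} is $d+i\eta^{-1}j$, a map on $Q$; the first thing I would do is pin down the identification of the underlying $F^\prime$-modules $Q_2$ and $Q$. Both are free on the spanning trees $L$ (for $C(\md^\prime)$) resp. $L^\prime_2$ (for $C(\mathcal{E}^\prime)$), and there is an evident bijection between these index sets: contracting the new edges in either picture yields a spanning tree of $B(\md)$, and conversely. So I would fix that bijection and then check that, under the substitution \rref{esolution}, the two maps agree coefficient by coefficient.

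Next I would organize the comparison into the three pieces dictated by the structure of the diagrams \rref{euqu} and \rref{equq}. First, the ``diagonal'' term: I claim that with $x=t^2$, $y=t^{-1}$, $z=t^{-2}$ the requirement \rref{edq} holds, i.e. $d^{\prime\prime}=d_Q$. This is exactly the assertion that the $d^{\prime\prime}$-coefficients, which differ from the $d_Q$-coefficients only through the white cycles that cross $e$ and $f$ (picking up factors built from $z,t$), reduce to the original coefficients; here one uses that $\xi$ in \rref{efxi} involves the combination $zt$ and $zt^{-1}$, and that the substitution is engineered so these reproduce the relevant $\md^\prime$-data. Second, I would match $d$ on $Q$ against the conjugate $\xi d\xi^{-1}$ restricted appropriately — but note that by \rref{edij} this is subsumed once \rref{edq} is established, so really the content is to check that the ``off-diagonal'' correction terms $i\eta^{-1}j$ and $i^\prime j^\prime\xi^{-1}$ coincide. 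For this I would write $\eta$ via \rref{efeta} (coefficient $\tfrac{1}{1+bxy}+\tfrac{1}{1+xy^{-1}}$) and $\xi$ via \rref{efxi} (coefficient $\tfrac{1}{1+ztv}+\tfrac{1}{1+zt^{-1}}$), substitute \rref{esolution} to get $xy=t$, $xy^{-1}=t^3$, $zt=t^{-1}$, $zt^{-1}=t^{-3}$, and then invoke Lemma \ref{lakl} (with the appropriate assignment of $a,k,\ell$) to rewrite one composite as the other. The maps $i,j$ (resp. $i^\prime,j^\prime$) are, up to the coefficient bookkeeping, the summands of $\Psi$ connecting $K_1,L,K_2$ (resp. $L^\prime_1,K^\prime,L^\prime_2$), so matching them amounts to another application of the same characteristic-$2$ Möbius identities, e.g.\ \rref{eklsign} and \rref{eakl}.

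The main obstacle I anticipate is purely bookkeeping rather than conceptual: keeping the orientation conventions for the black circuits $c$ and white circuits $w$ straight across the two projections $\md^\prime$ and $\mathcal{E}^\prime$, so that the powers of $x,y$ (resp. $z,t$) attached to each circuit are correctly recorded before the substitution. A black circuit through both $u_i,v$-edges in $B(\md^\prime)$ picks up $b\cdot(xy)^{\pm1}$ while the ``same'' circuit in $B(\mathcal{E}^\prime)$ picks up $b\cdot(zt)^{\pm1}$, and one must check the signs of the exponents agree with the skein-move correspondence of Figure 6; similarly for the white side, where the relevant factor is $xy^{-1}$ resp.\ $zt^{-1}$. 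Once the substitution $x=t^2,\ y=t^{-1},\ z=t^{-2}$ is made, every coefficient on both sides becomes a rational function in $t$ (and the old variables $b,v,\dots$), and the equality of the two sides is then a finite list of identities in $\mathbb{F}_2(t,\dots)$, each of which either is immediate or follows from Lemma \ref{lakl}. I would present the diagonal case \rref{edq} first, then the correction-term case, and close by remarking that these together are precisely the statement that \rref{edij}'s right side equals \rref{edeform}.
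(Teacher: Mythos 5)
Your plan is essentially the paper's: establish \rref{edq} on the diagonal and then match the off-diagonal correction terms $i\eta^{-1}j$ and $i^\prime j^\prime\xi^{-1}$ coefficient by coefficient using the explicit forms \rref{efeta}, \rref{efxi} together with Lemma \ref{lakl} and \rref{eklsign} under the substitution \rref{esolution}. The one organizational detail worth making explicit is that the paper factors the off-diagonal comparison into two separate equalities, $i=i^\prime$ (which forces $zt=y$ and $xy=t$ and needs no M\"obius identity) and $\eta^{-1}j=j^\prime\xi^{-1}$ (the identity \rref{ejetaxip}, which is where Lemma \ref{lakl} enters), whereas your sketch gestures at matching $i,j$ with $i^\prime,j^\prime$ ``up to bookkeeping'' without pinning down this factorization of the composite.
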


\Proof
The equality we need will hold when
\beg{eiip}{i=i^\prime,
}
\beg{ejetaxi}{\eta^{-1}j=j^\prime\xi^{-1}.
}
Let first interpret \rref{edq}. Consider two spanning trees $T$, $T^\prime$ 
in $L$ where $T^\prime$ is obtained from $T$ by omitting an edge of
height $0$ and adding an edge of height $1$. Then the coefficient of $T^\prime$ in $d_Q(T)$
is 
$$\frac{1}{1+bxy}+\frac{1}{1+w}$$
where $w$ is the appropriate white circuit, while the coefficient of $d^{\prime\prime}$
between the corresponding trees in $L_2$ is
$$\frac{1}{1+bt} +\frac{1}{1+w},$$
so we see that \rref{edq} is satisfied provided that
\beg{eeexyt}{xy=t.}

Next, we impose the equality \rref{eiip}. When performing $i$ on a spanning tree
$T$, we delete an edge, thus creating a white circuit. Denote the corresponding
element of $F^\prime$ by $v$. Then the corresponding coefficient in $i$ is
$$\frac{1}{1+yvw}+\frac{1}{1+bxy},$$
and the corresponding coefficient in $i^\prime$ is 
$$\frac{1}{1+ztvw}+\frac{1}{1+bt}.$$
Thus, \rref{eiip} will hold if we impose
\beg{exyzt}{zt=y, xy=t,}
which subsumes \rref{eeexyt}.

Next, however, we must consider the equation \rref{ejetaxi}, which translates to
\beg{ejetaxip}{\begin{array}{l}
\displaystyle
\left(\frac{1}{1+bxy}+\frac{1}{1+xy^{-1}}
\right)^{-1}
\left(\frac{1}{1+bxy}+\frac{1}{1+vx}\right)=
\\[4ex]
\displaystyle
\left(\frac{1}{1+ztv}+\frac{1}{1+bz}\right)
\left( \frac{1}{1+ztv}+\frac{1}{1+zt^{-1}}
\right)^{-1}.
\end{array}
}
By \rref{eakl} of Lemma \ref{lakl}, (and by \rref{eklsign}), \rref{ejetaxip} holds when
$$\begin{array}{ll}
k=x^{-1}y & \ell=v^{-1}x^{-1}\\
ztv=\ell^{-1}k & k=z t^{-1}\\
a=b^{-1}x^{-1}y^{-1} & a^{-1}k=bz.
\end{array}
$$
This in turn holds under the assumption \rref{esolution}. Note that this also implies 
\rref{exyzt}.

To be completely precise, we have solved the ``non-trivial'' case of the equations
\rref{edq}, \rref{eiip}, \rref{ejetaxi}: There is another ``trivial'' case
when the black cycle does not go through the vertex $v$ (resp. any of the edges 
$e$, $f$). In this case, the corresponding components of the differentials $d_Q$,
$d$, $d^{\prime\prime}$, \rref{edeform} coincide (in particular, the
corresponding component of the $i\eta^{-1}j$ summand is $0$ and the corresponding
component of $d$ commutes with $\xi$, so the equations remain true 
in that case as well).
\qed

We have therefore proved \rref{econclude} under the assumption
\rref{esolution}. Note however that we are not quite done yet, since the field $\tilde{F}$ 
does not satisfy the assumption \rref{cass} (since we have introduced an
algebraic relation between the elements $x,y$).  To remedy this situation, 
we need to observe that the proof of Lemma \ref{lxyzt} in fact gives a stronger 
statement (which would have been awkward to state at first):

\begin{lemma}
\label{lxyzt1}
Consider two spanning trees $T$, $T^\prime$ in $L$ where $T^\prime$ is obtained
from $T$ by omitting an edge of height $0$ and adding an edge of height $1$, thus specifying
a black circuit $b$ and a white circuit $w$. Denote, additionally, by $v$ the white circuit
obtained by deleting the edge involved in the definition of $i(T)$. Then, using formula
\rref{esolution} as the definition of $x,y,z$, the coefficients of the right hand side
of \rref{edij} and \rref{edeform},  calculated by rewriting the formula \rref{epsip} for the BOS
differential in terms of black and white circuits, are equal as elements of the field
$$\Z/2(b,v,w,t).$$
\end{lemma}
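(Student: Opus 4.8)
The plan is to observe that Lemma~\ref{lxyzt1} is not really a new statement but merely a bookkeeping upgrade of the computation already carried out in the proof of Lemma~\ref{lxyzt}: what we did there was verify, coefficient by coefficient, the equality of the right-hand side of \rref{edij} and the differential \rref{edeform}, and every step of that verification only used the field axioms of characteristic~$2$ together with the formula \rref{epsip} rewritten in terms of the circuit elements. The point I would make is that those circuit elements, for the pair $T,T^\prime$ in question, are \emph{generated} by just $b$, $v$, $w$ and $t$: by the conventions fixed just before the Lemma, any black circuit occurring is $b$ times a monomial in $x,y$ (equivalently, by \rref{esolution}, a power of $t$) and possibly $z^{\pm1},t^{\pm1}$ (again powers of $t$), hence lies in $\Z/2(b,t)$; likewise every white circuit occurring is $w$ or $v$ times a monomial in $x,y^{-1},yx^{-1},z,t$, hence lies in $\Z/2(v,w,t)$. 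So all the rational-function identities appearing in the proof of Lemma~\ref{lxyzt} already take place inside $\Z/2(b,v,w,t)$.

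Concretely I would proceed as follows. First, fix the pair $T,T^\prime$ with the associated black circuit $b$ and white circuit $w$, and the auxiliary white circuit $v$ coming from the edge used in defining $i(T)$; take \rref{esolution} as the \emph{definitions} $x=t^2$, $y=t^{-1}$, $z=t^{-2}$, so that $x,y,z,t$ are now explicit elements of $\Z/2(b,v,w,t)$ (indeed of $\Z/2(t)$). Second, rewrite the coefficient of $T^\prime$ in the right-hand side of \rref{edij} and in \rref{edeform} using \rref{epsip}, expressed through $b$, $v$, $w$, $t$; this is exactly the expression manipulated in the proof of Lemma~\ref{lxyzt}, except that one no longer asserts anything about algebraic independence of the variables. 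Third, invoke the identities \rref{eakl} and \rref{eklsign} with the substitutions $k=x^{-1}y=t^{-3}$, $\ell=v^{-1}x^{-1}=v^{-1}t^{-2}$, $a=b^{-1}x^{-1}y^{-1}=b^{-1}t^{-1}$ exactly as in the proof of Lemma~\ref{lxyzt}; these identities are valid in \emph{any} field of characteristic~$2$ whenever the denominators do not vanish, so in particular they hold as identities of rational functions in $\Z/2(b,v,w,t)$, the denominators being nonzero rational functions. Finally, handle the ``trivial'' case, where the black circuit does not pass through $v$ (resp.\ through $e$ or $f$): there the relevant component of the $i\eta^{-1}j$ term vanishes and the component of $d$ commutes with $\xi$, so equality is immediate.

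The only thing requiring care, and the step I would flag as the main obstacle, is the verification that the denominators occurring really are nonzero as elements of $\Z/2(b,v,w,t)$ — that is, that substituting $x=t^2$, $y=t^{-1}$, $z=t^{-2}$ into the various factors $1+bxy$, $1+xy^{-1}$, $1+vx$, $1+ztv$, $1+bz$, $1+zt^{-1}$, etc., never produces the zero rational function. Each such factor becomes $1+(\text{monomial in }b,v,w,t)$, and a sum $1+m$ with $m$ a nontrivial Laurent monomial in independent indeterminates is never the zero element of the function field; the one case that needs a separate glance is a factor of the form $1+1$, which would be zero in characteristic~$2$, but inspection of the list shows every exponent vector arising is nonzero (e.g.\ $bxy=bt$, $xy^{-1}=t^3$, $ztv=t^{-1}v$, $bz=bt^{-2}$, $zt^{-1}=t^{-3}$), so this does not occur. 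Once this is checked, the identity of Lemma~\ref{lxyzt1} follows from the identity of rational functions established in the proof of Lemma~\ref{lxyzt}, now read literally inside $\Z/2(b,v,w,t)$.
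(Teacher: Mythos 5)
Your proposal is correct and follows the paper's own route exactly: the paper regards Lemma~\ref{lxyzt1} as nothing more than a re-reading of the proof of Lemma~\ref{lxyzt}, observing that all identities there (\rref{eakl}, \rref{eklsign}, and the substitutions $k=x^{-1}y$, $\ell=v^{-1}x^{-1}$, $a=b^{-1}x^{-1}y^{-1}$) are purely formal manipulations in characteristic $2$ involving only the circuit elements $b,v,w$ and the parameter $t$, and hence hold literally in $\Z/2(b,v,w,t)$. Your extra check that no denominator $1+m$ degenerates after the substitution \rref{esolution} is the one point the paper leaves tacit, and you have verified it correctly.
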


\qed

\vspace{3mm}

We now claim

\begin{proposition}
\label{pxyzt}
There exists a field $F^{\prime\prime}\supset F^\prime=F(t)$
and embeddings of fields
\beg{eeiota}{\iota:F(x,y)\r F^{\prime\prime},}
$$\kappa:F\r F^{\prime\prime}$$
such that the image of the differential \rref{edeform} under $\iota$
and the image of the differential $\xi^{-1}d_Q\xi$ under $\kappa$ coincide.
\end{proposition}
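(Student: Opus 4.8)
The plan is to realize both differentials inside a single field by "spreading out" the specialization \rref{esolution}, with Lemma \ref{lxyzt1} supplying the key identity.

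First I would record the shape of the two sides. Rewriting \rref{epsip} in black/white circuit form (as in Lemma \ref{lxyzt1}), every matrix entry of \rref{edeform} on $Q$ is an explicit rational function of finitely many circuit elements of $\md$ -- each a monomial in the algebraically independent edge variables $q_e$ of $F$ (independence guaranteed by \rref{cass} and Proposition \ref{elll}) -- together with the two R2-edge variables $x$ and $y$; likewise every matrix entry of $\xi^{-1}d_Q\xi$ is a rational function of the same circuit elements of $\md$ together with $t$ (after imposing $z=t^{-2}$). Lemma \ref{lxyzt1} says that, entry by entry, the substitution $x=t^2$, $y=t^{-1}$ carries the first expression to the second, and that this is an identity in the rational function field $\Z/2(b,v,w,t)$ with $b,v,w,t$ viewed as free indeterminates. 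The reason this does not yet prove Proposition \ref{pxyzt} is that $x\mapsto t^2$, $y\mapsto t^{-1}$ lowers transcendence degree, so it is not induced by any field embedding of $F(x,y)$; but we must keep $x,y$ algebraically independent so that Condition \rref{cass} holds for $\md^{\prime}$. So we need an overfield $F^{\prime\prime}\supset F'=F(t)$ into which $F(x,y)$ (with $x,y$ independent) embeds and over which \rref{edeform} and $\xi^{-1}d_Q\xi$ genuinely coincide.

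To build it I would take $\iota$ to be (induced by) the inclusion of $F(x,y)$ into a suitable $F^{\prime\prime}$ -- so that $\iota_{*}(\text{\rref{edeform}})$ is just \rref{edeform} over $F^{\prime\prime}$ -- and put all the content into $\kappa$ (extended to $F(t)$). The crucial point is that the identity of Lemma \ref{lxyzt1} holds with $b,v,w,t$ \emph{free}. Using Lemma \ref{lakl} and the identity \rref{eklsign}, I would bring each entry of \rref{edeform} and of $\xi^{-1}d_Q\xi$ into a common normal form and read off the correspondence between the two normal forms: it matches each circuit monomial on the $\md^{\prime}$-side with a monomial in $t$ on the $\mathcal{E}^{\prime}$-side, and it interchanges the roles of certain black and white circuit elements -- the interchange being legitimate precisely because of the black-white symmetry of $\Psi$ (Proposition \ref{ppsym}). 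Then $\kappa$ is \emph{defined} on the generators $q_e$ and on $t$ by the unique multiplicative assignment realizing that correspondence (after enlarging $F^{\prime\prime}$ by whatever roots of $x,y$ this requires, which does not affect \rref{cass}); because the correspondence is a homomorphism of the relevant exponent lattices, this assignment is consistent across all pairs $(T,T')$, and by construction $\kappa_{*}(\xi^{-1}d_Q\xi)=\iota_{*}(\text{\rref{edeform}})$ entry by entry -- this is \rref{econclude}, now in a form independent of \rref{esolution}.

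The step I expect to be the main obstacle is checking that $\kappa$, together with its extension to $F(t)$, is an honest field embedding -- equivalently, that $\kappa(q_e)$ and $\kappa(t)$ are algebraically independent over $\Z/2$ in $F^{\prime\prime}$ -- and that $x,y$ stay independent over $\iota(F)$. This reduces to a transcendence/lattice computation: the matching conditions do not overdetermine $\kappa$, leaving a free abelian group's worth of choices, which is exactly the room needed to keep all of these elements algebraically independent; here the Fundamental Lemma \ref{l1} is what rules out hidden relations among the circuit monomials being matched that would otherwise collapse the count. Once the embeddings are established, the required equality of differentials is exactly Lemma \ref{lxyzt1}, so Proposition \ref{pxyzt} follows; combined with flatness of field extensions it yields equality of the cohomology ranks of \rref{edeform} over $F(x,y)$ and of $\xi^{-1}d_Q\xi$, hence -- unwinding Lemmas \ref{ltheta} and \ref{lxyzt} -- Proposition \ref{pr2}.
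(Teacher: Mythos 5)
Your high-level diagnosis is right: the specialization $x=t^2$, $y=t^{-1}$ drops transcendence degree, so the only hope is to embed both complexes into a common overfield and push the discrepancy into a multiplicatively defined $\kappa$. But the proposal never actually produces the embeddings, and that construction \emph{is} the entire content of the Proposition. The paper does it in one stroke: introduce a single new indeterminate $s$, set $F^{\prime\prime}=F(s,t)$, take $\iota(x)=t^{2}s$ and $\iota(y)=t^{-1}s$ (these are manifestly algebraically independent over $F$, so no roots are needed), and define $\kappa$ (the paper's $\lambda$) to fix $t$ and the vertex variables while sending a black circuit $b$ to $b^{\prime}=b\,s^{2\epsilon}$, where $\epsilon\in\{0,\pm1\}$ records whether and how the circuit passes through $v$. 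Multiplicativity of $b\mapsto b^{\prime}$ is what makes $\kappa$ well-defined on the face generators, and Lemma \ref{lxyzt1} then closes the argument because the coefficients on both sides are literally the same rational functions of $b,v,w,t,s$.

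Several things you lean on are not in fact used, and flagging them as load-bearing is misleading. You invoke Lemma \ref{lakl} and \rref{eklsign} to put entries ``in a common normal form,'' but those identities were already consumed in the proof of Lemma \ref{lxyzt}; by the time one reaches Proposition \ref{pxyzt}, the only input needed is the already-established field identity of Lemma \ref{lxyzt1}. You invoke Proposition \ref{ppsym} to justify a black-white interchange inside $\kappa$; the paper's $\kappa$ does no such interchange (it fixes the vertex variables and rescales only black circuits), and the black/white swap you noticed in \rref{ejetaxip} is a feature of the passage from $\mathcal{D}^\prime$ to $\mathcal{E}^\prime$, already handled in Section \ref{sr2}, not something $\kappa$ must realize. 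You invoke the Fundamental Lemma \ref{l1} to rule out ``hidden relations'' in a lattice count; the paper never needs this, because the explicit formula $b\mapsto b\,s^{2\epsilon}$ makes well-definedness and independence immediate. Finally, deferring the existence of $\kappa$ to ``the matching conditions leave a free abelian group's worth of choices'' is precisely the unproved step: with the paper's $s$ in hand, there is no lattice computation at all. Your remark about ``enlarging $F^{\prime\prime}$ by roots of $x,y$'' is close to a valid alternative packaging (one can take $F^{\prime\prime}=F(x,y)(t)$ with $t^{3}=xy^{-1}$ and then $s=yt$), but as written it is a guess rather than a construction; you would still need to write down $\kappa$ explicitly and check it is a homomorphism, which is exactly what is missing.
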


\begin{proof}
Consider first the field $F^\prime=F(t)$ (with $x,y,z$ defined by \rref{esolution}). Now
let $s$ be a new variable algebraically independent of the rest. Now define $F^{\prime\prime}=F(s,t)$.
For a black circuit $b$ labelled in $F^\prime$, let
$$b^\prime =bs^{2\epsilon}$$
where $\epsilon=0$ if the black circuit does not pass through $v$, and $\epsilon=1$
(resp. $-1$) when the black circuit contains the oriented edge $(u_1,v)$, and hence
also $(v,u_2)$ (resp. $(v,u_1)$, and hence also $(u_2,v)$). Note that this
definition is multiplicative on black circuits (identifying, as before, a circuit with the product
of its edge variables or their inverses, depending on orientation).

Now by Lemma \ref{lxyzt1}, if we can find an embedding \rref{eeiota} such that, in computing
the BOS differential by formula \rref{epsip}, $b$ is replaced by $b^\prime$ by the embedding (while the
variables $v,w,t$ remain unchanged), and an embedding
$$\lambda:F^\prime\r F^{\prime\prime}$$
which also sends $b$ to $b^\prime$ while fixing $v,w,t$, then the image of \rref{edeform}
under $\iota$ is equal to the image of the right hand side of \rref{edij} under $\lambda$.

Regarding $\iota$, we may simply choose the identical embedding on $F$, while
sending
$$x\mapsto xs=t^2s,\;y\mapsto ys=t^{-1}s.$$
Note that these elements are algebraically independent over $F$.
Regarding $\lambda$,  this embedding will be identical on $t$ and on the vertex variables, while
its definition on face generators of $F$ is possible by the multiplicativity of $(?)^\prime$.

Thus, our statement follows from \rref{edij}.
\end{proof}

\vspace{3mm}

\section{Reidemeister 3}

\label{sr3}

The methods of the last section do not apply to the Reidemeister
3 move directly because the projections before and after an R3 move play symmetrical
roles: there is no obvious candidate inside the Baldwin-Ozsv\'{a}th-Szab\'{o}
complex of one projection for
a part which would be isomorphic to some modification of the complex of the
other. To get around this, we use the following idea suggested to us
by John Baldwin: let us study braids on three strands labelled, from left to 
right, $1,2,3$. Let $a$ resp. $b$ be the braid crossing strand $1$ over strand $2$
(resp. strand $2$ over strand $3$). Then the famous braid relation can be written
in the form
\beg{ebraid1}{aba^{-1}=b^{-1}ab,
}
which means that we have an unbraid
\beg{ebraid2}{b^{-1}a^{-1}baba^{-1}
}
(see Figure 8).
\begin{figure}
\label{funbraid3}
\begin{tikzpicture}
\braid[braid colour=black,strands=3,braid start={(0,0)}]
{\sigma_{1}^{-1} \sigma_{2} \sigma_{1} \sigma_{2} \sigma_{1}^{-1} \sigma_{2}^{-1}}
\draw (1,0) node[anchor=south] {1};
\draw (2,0) node[anchor=south] {2};
\draw (3,0) node[anchor=south] {3};
\draw[->,very thick] (1,-6) -- (1,-6.1);
\draw[->,very thick] (2,-6) -- (2,-6.1);
\draw[->,very thick] (3,-6) -- (3,-6.1);
\end{tikzpicture}
\caption{The $b^{-1}a^{-1}baba^{-1}$ unbraid}
\end{figure}
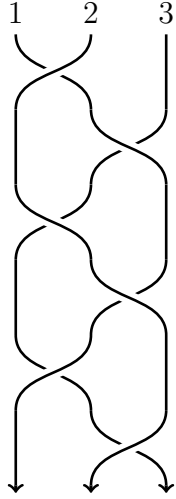

Now consider a generic projection $\md$ with three arcs $1,2,3$ such that 
$1$, $2$ bound a component of $S^2\smin \md$ labelled black, and 
$2$, $3$ bound a component of $S^2\smin \md$ labelled white. Then by a BR move
we shall mean an operation where we replace the arcs $1,2,3$ by the unbraid \rref{ebraid2}.

\begin{proposition}
\label{pbraid}
Suppose a generic projection $\md^\prime$ of an oriented link $L$ (satisfying \rref{lass})
is obtained from a generic projection $\md$ of $L$ using the BR move. Then 
$H^i(C(\md^\prime))$ and $H^i(C(\md))$ have equal ranks.
\end{proposition}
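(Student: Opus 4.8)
The idea is to decompose the BR move into a sequence of operations each of which either leaves the complex literally unchanged (up to degree shift) or falls within the scope of the Reidemeister 2 machinery developed in Section \ref{sr2}, together with the Reidemeister 3 move itself, which we handle by the ``skein/R2'' reduction technique. Concretely, the unbraid \rref{ebraid2} is a word of length six in the generators $\sigma_1^{\pm 1},\sigma_2^{\pm 1}$, and at the level of projections each application of an elementary braid generator $\sigma_i^{\pm 1}$ introduces a single crossing; the statement that \rref{ebraid2} is an unbraid means that, reading the six crossings and cancelling them in pairs via the braid relation \rref{ebraid1}, we arrive at the trivial braid on three strands, i.e. at $\md$ with the three arcs $1,2,3$ left alone.

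**Key steps, in order.** First I would record the effect on the black graph: adding the six crossings of the BR unbraid to the neighborhood of the arcs $1,2,3$ modifies $B(\md)$ near the distinguished black face (bounded by arcs $1,2$) and the distinguished white face (bounded by arcs $2,3$) in a completely explicit and local way. Next, I would observe that the braid relation $aba^{-1}=b^{-1}ab$ is, \emph{geometrically}, a Reidemeister 3 move on the three strands, while the cancellation $\sigma_i\sigma_i^{-1}=1$ is a Reidemeister 2 move. Thus the BR move factors as: (i) several R2 cancellations, reducing the six-crossing word to a three-crossing word realizing one side of the braid relation; and (ii) one R3 move relating the two sides of the braid relation. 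For step (i), Proposition \ref{pr2} applies directly and gives equality of ranks of cohomology. For step (ii), I would run the argument of Section \ref{sr3}'s general strategy: set up the First spectral sequence (filtering $C(\md^\prime)$ by the total height contribution of the edges \emph{not} involved in the R3 region), identify the $E_1$-term, show as in \rref{r2ee1} that it lies on a line of slope $-1/2$ so that only $d_2$ survives and the sequence collapses at $E_3$; then invoke an analogue of the Second spectral sequence (realized via the skein projection $\mathcal{E}$ and the further projection $\mathcal{E}^\prime$ as in the R2 case, using Lemmas \ref{ltheta}, \ref{lakl}, \ref{lxyzt}, \ref{lxyzt1} and Proposition \ref{pxyzt}) to match the perturbed differential with an honest BOS differential up to a change of variables. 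Because Proposition \ref{generic} lets us pass freely between fields as long as the denominators stay nonzero, and the Fundamental Lemma (Lemma \ref{l1}) together with Proposition \ref{elll} guarantees that generic choices of edge variables keep all denominators nonzero, each stage preserves $\rank_F H^i$.

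**Main obstacle.** The delicate point is step (ii): unlike the R2 move, the two projections related by the braid relation play symmetric roles, so there is no subcomplex of one which is visibly a deformation of the complex of the other. The resolution — as flagged in the text preceding Proposition \ref{pbraid} — is to introduce an auxiliary projection $\mathcal{E}$ obtained by a skein move in the R3 region (deleting the ``middle'' strand's crossings appropriately), run the First-spectral-sequence reduction to express both $C(\md)$ and $C(\md^\prime)$ as cochain complexes of the shape \rref{euqu} over $\mathcal{E}$'s complex, and then apply Lemma \ref{ltheta} on both sides. The bookkeeping of which black/white circuits acquire which monomial factors in the new edge variables, and checking that the system of equations analogous to \rref{esolution} has a solution keeping all six (or more) new variables generic, is where essentially all the work lies; the ``trivial case'' (black circuit avoiding the R3 region) is immediate, and the ``non-trivial case'' reduces, after the substitutions, to the Möbius identity \rref{eakl} exactly as in Lemma \rref{lxyzt}. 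Once that matching is in place, the long exact sequences from Lemma \ref{ltheta} on both sides, combined with Proposition \ref{generic} to absorb the change of coefficient field, yield $\rank H^i(C(\md^\prime))=\rank H^i(C(\md))$.
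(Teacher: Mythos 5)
Your proposal is circular: you reduce the BR move to ``several R2 cancellations plus one R3 move,'' but R3 invariance is what the paper \emph{derives} from Proposition \ref{pbraid}, not what it assumes. Note the logical order in the paper: Proposition \ref{pbraid} is proved directly, and then Corollary \ref{ecorr3} (R3 invariance) is deduced from it together with the R2 results. Your decomposition, even were it laid out carefully, proves the proposition only contingently on R3 invariance, which you then have to establish by some other means. When you come to sketch that other means, you write ``set up the First spectral sequence \ldots then invoke an analogue of the Second spectral sequence (realized via the skein projection $\mathcal{E}$ and the further projection $\mathcal{E}^\prime$),'' but that is precisely the R2 machinery of Section \ref{sr2}, and the paper explicitly explains (in the opening paragraph of Section \ref{sr3}) why it does \emph{not} apply to the R3 move directly: the two sides of an R3 move are symmetric, so there is no natural inclusion of one complex as a subquotient of the other, and hence no analogue of the $U_0\to Q\to U_2$ diagram \rref{euqu}. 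The BR move is introduced exactly to break this symmetry, because the unbraid side of a BR move is strictly larger than the trivial side.

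Concretely, what your sketch is missing is the entire content of the paper's proof: the identification of the six new edges $e_1,\dots,e_6$ with heights $1,0,1,1,1,0$, the classification of the sets $K_I$ by $e$-degree, and above all the cancellation Lemma \ref{lr3cancel}, which shows by explicit matrix computations that after the $d_0$ of the First spectral sequence (filtering by the height contribution of the edges \emph{other} than $e_1,\dots,e_6$, not by those of the ``R3 region''), only $K_{\{125\}}$ survives, putting $E_1$ on a line of slope $-1/2$. After that, Lemma \ref{lr3d2} computes $d_2$ explicitly and matches it, via two independent R2-type reductions (one on $e_2,e_5$ affecting face variables, one on $e_4,e_6$ affecting vertex variables, applied via Lemma \ref{lxyzt}) and a final removal of the auxiliary relation $AD=CE$, to a change of variables applied to the differential of $C(\md)$. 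Your proposal contains none of this computation and instead tries to route through R3, which is exactly the move the paper takes pains to avoid handling head-on. A small further inaccuracy: the word $b^{-1}a^{-1}baba^{-1}$ has no adjacent inverse pair, so no R2 cancellation is available until \emph{after} an R3 move has been performed; your ``step (i) first, then step (ii)'' ordering is backwards even as a description of the decomposition.
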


\vspace{3mm}

\begin{corollary}
\label{ecorr3}
The rank of $H^i(C(\md))$ is invariant under the Reidemeister 3 move.
\end{corollary}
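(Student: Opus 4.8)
The plan is to derive R3-invariance from Proposition \ref{pbraid} by expressing the R3 move as a composition of moves whose effect on BOS cohomology we already control, namely BR moves together with Reidemeister 1 and 2 moves (handled in Section \ref{sr2}). First I would observe that the unbraid relation \rref{ebraid2}, read as an identity of braid words, says precisely that the three-strand tangle $b^{-1}a^{-1}baba^{-1}$ is isotopic (rel endpoints, in the braid group) to the trivial tangle. Hence performing a BR move and then removing the resulting trivial three-strand braid by a sequence of R1 and R2 moves returns one to a projection related to the original by planar isotopy; this shows BR moves can be realized by sequences of R1, R2 moves together with planar isotopy, so that Proposition \ref{pbraid} is consistent with, and in fact implies nothing new beyond, what the R1/R2 analysis already gives — except that it is stated as a genuine equality of ranks in a setting where the coloring data on the two sides need not match obviously.

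The substantive step is the reverse direction: realizing an R3 move between $\md$ and $\md''$ using BR moves and R1/R2 moves. I would take the standard fact that the braid relation $aba = bab$ (equivalently \rref{ebraid1}) generates, together with far commutation, all isotopies of braids, and that locally an R3 move on a diagram is exactly a substitution of one side of the braid relation for the other inside a small disk where the three strands cross pairwise in the required over/under pattern. Concretely, starting from $\md$ I would (i) use R2 moves to introduce a cancelling pair of crossings so as to enlarge the relevant local picture to contain the word $aba^{-1}$ flanked appropriately; (ii) apply a BR move to replace it by $b^{-1}ab$ (this is where Proposition \ref{pbraid} is invoked, giving equality of ranks of $H^i$); (iii) use further R2 (and possibly R1) moves to cancel the auxiliary crossings, arriving at $\md''$. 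Each R1 and R2 step preserves the rank of $H^i(C(-))$ by the results of Section \ref{sr2} (Proposition \ref{pr2} and the R1 discussion preceding it, together with the Remark showing independence of the field $F$ and hence that the condition \rref{cass} can always be arranged by passing to rational function fields). Composing, $\rank H^i(C(\md)) = \rank H^i(C(\md''))$.

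One point that needs care is the black/white coloring and the assumption \rref{lass}: after inserting auxiliary R2 crossings one must check that the enlarged diagram still has all complementary regions simply connected and still carries a valid checkerboard coloring compatible with the hypotheses of Proposition \ref{pbraid} (which requires the three arcs to bound a black region on one side and a white region on the other). This is arranged by choosing the auxiliary R2 moves appropriately — since R2 either preserves or, by the black/white symmetry exploited throughout Section \ref{sr2}, can be taken in the form that adds two black vertices, one can keep track of the coloring explicitly. The main obstacle I expect is precisely this bookkeeping: verifying that the particular local tangle replacements realizing the R3 move can be threaded through a sequence of R1/R2 and BR moves \emph{while} maintaining \rref{lass} and the coloring conditions, rather than any deep new algebraic input — the algebra having already been done in proving Propositions \ref{pr2} and \ref{pbraid}. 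Once the sequence of moves is written down, the conclusion is immediate from the invariance statements already established.
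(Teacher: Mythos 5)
Your overall strategy is the paper's: realize the R3 move as a BR move (Proposition \ref{pbraid}) combined with R2 moves, and use Proposition \ref{pr2} to handle the latter. The paper does exactly this: starting from the tangle $b^{-1}ab$ inside $\md$, it appends the unbraid $b^{-1}a^{-1}baba^{-1}$ by a BR move and then cancels the middle three adjacent inverse pairs by three R2 moves, arriving at $aba^{-1}$.

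However, your first paragraph contains a genuine error that would, if believed, render the whole argument circular. You claim that ``performing a BR move and then removing the resulting trivial three-strand braid by a sequence of R1 and R2 moves returns one to a projection related to the original by planar isotopy,'' and conclude that BR moves ``can be realized by sequences of R1, R2 moves together with planar isotopy.'' This is false: the word $b^{-1}a^{-1}baba^{-1}$ is \emph{freely reduced} (no adjacent pair of inverse letters), so it cannot be cancelled to the trivial braid by R2 moves; it is trivial in the braid group only because of the braid relation, which is precisely R3. Indeed, if BR were reducible to R1/R2, Proposition \ref{pbraid} would carry no new content and you could not extract R3-invariance from it. The entire point of Proposition \ref{pbraid} is that it encodes, on the level of BOS cohomology, exactly the extra flexibility that R3 provides.

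Your second paragraph also misstates the BR move at step (ii): the BR move does not ``replace $aba^{-1}$ by $b^{-1}ab$''; it replaces three trivial parallel arcs by the unbraid $b^{-1}a^{-1}baba^{-1}$. The conversion of $aba^{-1}$ to $b^{-1}ab$ is then achieved by inserting the unbraid adjacent to the existing tangle and cancelling with R2 moves (the paper presents it in the other direction, starting from $b^{-1}ab$ and obtaining $aba^{-1}$). Your auxiliary step (i) of first creating a cancelling pair by R2 is not needed: the BR move already supplies all the crossings that subsequently cancel. Once these two points are corrected, your argument coincides with the paper's. The bookkeeping concern you raise about the checkerboard coloring and \rref{lass} is legitimate but is already built into the definition of the BR move, which requires arcs $1,2$ to bound a black region and $2,3$ a white region; this is the precise configuration that appears around $b^{-1}ab$.
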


\vspace{3mm}
\noindent
{\bf Proof of Corollary \ref{ecorr3} using Proposition \ref{pbraid}:}  Simply note that
change from  $b^{-1}ab$ to $aba^{-1}$ is an R3 move (see Figure 9). 
If we want to make this move
inside a projection $\md$, first change $b^{-1}ab$ to 
\beg{eintermed}{b^{-1}abb^{-1}a^{-1}baba^{-1}}
using the BR move, and then change \rref{eintermed} to $aba^{-1}$ using a sequence of
R2 moves (which we can do by Proposition \ref{pr2}) - see Figure10), thus implying invariance
of BOS cohomology under the R3 move. (Note: We have included orientations in figures 8,9 and 10
in reference to composition of braids. However, this is not required to coincide with the
orientation of the link. In BOS cohomology, change of orientation only affects the grading shift
in R1 moves, not in R2 and R3 moves.)
\qed

\begin{figure}
\label{funbraidR3}
\begin{tikzpicture}[line width=1pt]
\braid[braid colour=black,strands=3,braid start={(-3,0)}]
{\sigma_1^{-1},\sigma_2,\sigma_1}
\draw[->] (-2,-3) -- (-2,-3.1);
\draw[->] (-1,-3) -- (-1,-3.1);
\draw[->] (0,-3) -- (0,-3.1);
\draw[->] (4,-3) -- (4,-3.1);
\draw[->] (5,-3) -- (5,-3.1);
\draw[->] (6,-3) -- (6,-3.1);

\draw (-2,0) node[anchor=south] {1};
\draw (-1,0) node[anchor=south] {2};
\draw (0,0) node [anchor=south] {3};
\draw (4,0) node [anchor=south] {1};
\draw (5,0) node [anchor=south] {2};
\draw (6,0) node [anchor=south] {3};

\draw[->] (1,-1.5) -- (3,-1.5);
\braid[braid colour=black,strands=3,braid start={(3,0)}]
{\sigma_2,\sigma_1,\sigma_2^{-1}}
\end{tikzpicture}
\caption{Reidemeister 3 is equivalent to replacing the $aba^{-1}$ braid with the $b^{-1}ab$ braid}
\end{figure}
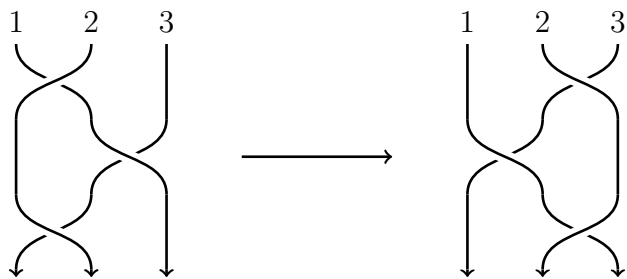

\vspace{3mm}

\begin{figure}
\label{funbraidR2}
\begin{tikzpicture}[line width=1pt]

\draw[->] (-2,0) -- (-2,-2.1);
\draw[->] (-1,0) -- (-1,-2.1);

\draw[->] (-.5,-1) -- (.5,-1);

\draw[->] (1,-2) -- (1,-2.1);
\draw[->] (2,-2) -- (2,-2.1);

\draw (-2,0) node[anchor=south] {1};
\draw (-1,0) node[anchor=south] {2};
\draw (1,0) node[anchor=south] {1};
\draw(2,0) node[anchor=south] {2};

\braid[braid colour=black,strands=2, braid start={(0,0)}]
{\sigma_1^{-1},\sigma_1}

\end{tikzpicture}
\caption{Reidemeister 2 is equivalent to replacing the unbraid with the $a^{-1}a$ unbraid}
\end{figure}

\vspace{3mm}
The remainder of this section is dedicated to proving Proposition \ref{pbraid}. The method
of proof is, in fact, 
more or less analogous to the proof of Proposition \ref{pr2}, but unfortunately,
the situation is more complicated. Let $\md^\prime$ be a projection obtained
from a projection $\md$ by the BR move. By our definition of the BR move,
we know that $B(\md)$ has a vertex $u$ in the component of $S^2\smin \md$
shared by arcs $1$ and $2$. Thus, there is a white component shared by arcs
$2$ and $3$, and there is another face colored black adjacent to $3$, which 
corresponds to a vertex $w$ of $B(\md)$. 

\vspace{3mm}
Then we have
$$V(B(\md^\prime))=(V(B(\md))\smin \{u\})\amalg \{u_1,u_2,v_1,v_2\}.$$
To describe $E(B(\md^\prime))$, we note that, once again, if we denote by $S$
the set of edges adjacent to $u$ in $B(\md)$, then 
$$S=S_1\amalg S_2$$
such that to each edge $\{u,q\}\in S_i$ there corresponds, in $B(\md^\prime)$,
an edge $\phi(\{u,q\}):=\{u_i,q\}$. In addition, every edge $\{q,q^\prime\}$ 
of $B(\md)$ where $q,q^\prime\neq u$
is also present in $B(\md^\prime)$ (including the case when one or both of $q,q^\prime$
are equal to $w$), and the following additional special edges are also in 
$B(\md^\prime)$ (we choose orientations to make assignment of elements easier later):
$$\begin{array}{l}e_1=(u_1,v_1),\;e_2=(v_1,v_2),\;e_3=(v_1,w),\;\\
e_4=(w,v_2),\;e_5=(v_2,u_2),\;e_6=(w,u_2).\end{array}$$
There are no additional edges in $B(\md)$ except the ones just specified.
The heights of $e_1,...,e_6$ are, in this order, $1,0,1,1,1,0$. (All this is
determined by the braid \rref{ebraid2} - see Figure 11.) 

\vspace{3mm}

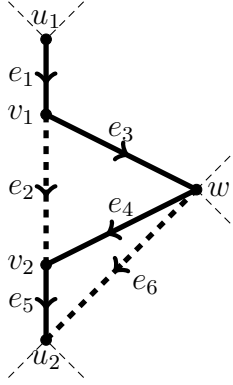
\begin{figure}
\label{fedges}
\begin{tikzpicture}
\draw[dashed][line width=2pt] (0,1) -- (0,-1);
\draw[line width=2pt] (0,1) -- (2,0);
\draw[line width=2pt] (2,0) -- (0,-1);
\draw[line width=2pt] (0,-1) -- (0,-2);
\draw[dashed][line width=2pt] (0,-2) -- (2,0);
\draw[line width=2pt] (0,1) -- (0,2);

\draw[densely dashed] (-.5,2.5) -- (0,2);
\draw[densely dashed] (.5,2.5) -- (0,2);
\draw[densely dashed] (-.5,-2.5) -- (0,-2);
\draw[densely dashed] (.5,-2.5) -- (0,-2);
\draw[densely dashed] (2,0) -- (2.5,.5);
\draw[densely dashed] (2,0) -- (2.5,-.5);

\draw[->][line width=2pt] (0,1.5) -- (0,1.4);
\draw[->][line width=2pt] (0,0) -- (0,-.1);
\draw[->][line width=2pt] (0,-1.5) -- (0,-1.6);
\draw[->][line width=2pt] (1,-1) -- (.9,-1.1);
\draw[->][line width=2pt] (.9,-.55) -- (.8,-.6);
\draw[->][line width=2pt] (1,.5) -- (1.1,.45);

\draw (0,1.5) node[anchor=east] {$e_1$};
\draw (0,0) node[anchor=east] {$e_2$};
\draw (1,.5) node[anchor=south] {$e_3$};
\draw (1,-.5) node[anchor=south] {$e_4$};
\draw (0,-1.5) node[anchor=east] {$e_5$};
\draw (1,-1) node[anchor=north west] {$e_6$};

\draw (0,2) node[anchor=south] {$u_1$};
\draw (0,1) node[anchor=east] {$v_1$};
\draw (0,-1) node[anchor=east] {$v_2$};
\draw (2,0) node[anchor=west] {$w$};
\draw(0,-2) node[anchor=north] {$u_2$};

\filldraw[black] 
(0,2) circle (2pt)
(0,1) circle (2pt)
(0,-1) circle (2pt)
(2,0) circle (2pt)
(0,-2) circle (2pt);

\end{tikzpicture}
\caption{The neighborhood of the black graph associated to 
the $b^{-1}a^{-1}baba^{-1}$ unbraid: 
$h(e_1)=1, h(e_2)=0, h(e_3)=1, h(e_4)=1, h(e_5)=1, h(e_6)=0$}
\end{figure}

\vspace{3mm}
Let $K$ be a field with elements satisfying the assumptions \rref{cass}
for the link projection $\md$.
We will now describe a field $\tilde{K}$ which can be used
to calculate BOS cohomology for the projection $\md^\prime$.
Concretely, consider the field $\tilde{K}$ obtained from $K$
by attaching new variables $A,B,C,D,E,F$, each individually algebraically
independent of $K$, corresponding to the edges
$e_1,...,e_6$ (in that order and orientation), and the same variables for the
corresponding white edges. On forming
$\alpha(T,T^\prime)$, $\beta(T,T^\prime)$ in 
$C(\md^\prime)$, we adopt the same convention as in the case of the R2 move,
i.e. for a black or white circuit $c$ occuring in a graph (a tree
plus or minus one edge) containing the edges $e_i$, $i\in I$, and
not the edges $e_j$, $j\in \{1,...,6\}\smin I$, denote by $\approx_I$
the equivalence relation on $\{u_1,u_2,v_1,v_2,w\}$ of being contained
in the same connected component of the graph formed by the edges 
$e_i$, $i\in I$. Then
take the appropriate element of $K$ assigned to the circuit in the
graph obtained by contracting each equivalence class of $\approx_I$ to a point,
and omitting the edges $e_j$, $j\in \{1,...,6\}\smin I$, and then multiply
by those of the variables $A,B,C,D,E,F$ which occur in $c$
or which $c$, or their inverses, according to orientation. 
Our only additional assumption on the field $\tilde{K}$ at this point is that the differential 
\rref{epsip} make sense for $\md^\prime$ in the sense that the denominators
be non-zero.

\vspace{3mm}
Now if we attempted to use an analogue of the Second spectral sequence directly,
i.e. filter $C(\md^\prime)$ by the height contributions
from the edges $e_i$, $i=1,...,6$, the associated graded complex will be
non-trivial in $5$ different degrees (the associated graded piece in filtration
degree $0$ turns out to be trivial). This situation seems too complicated
to analyze directly by a diagram analogous to \rref{euqu}.

\vspace{3mm}
This is where the First spectral sequence becomes relevant: Let $F^pC(\md^\prime)$ be
the filtration by height contributions of all the edges except $e_1,...,e_6$.
More precisely, then, again, $F^pC(\md^\prime)$ is spanned by all spanning trees $T$
of $B(\md^\prime)$ such that
\beg{r3e1}{
\sum_{e\in EB(\md):\phi(e)\in ET} h(e) +\sum_{e\in EB(\md):\phi(e)\notin ET}
(1-h(e))\geq p.
}
Call the sum of the height contributions of the edges $e_i$ the {\em $e$-degree}:
$$e(T):=\sum_{i:e_i\in ET} h(e_i)+\sum_{i:e_i\notin ET}(1-h(e_i)).$$
For a subset $I\subset\{1,...,6\}$, denote
by $K_I$ the set of all spanning trees $T$ of $B(\md^\prime)$ such that 
$e_i\in ET$ if and only if $i\in I$. 

Then the following table specifies the $e$-degrees of the sets $K_I$:
$$\begin{array}{ll}
\text{$e$-degree} & \text{$I$}\\
1 & \{126\},\{236\},\{246\},\{256\}
\\
2& \{12\},\{23\},\{24\},\{25\},\{1246\},\{1256\},\{1236\}
\\
3& \{125\},\{123\},\{124\},\{235\},\{245\},\{156\},\{356\},\{146\},\{346\}
\\
4&\{1235\},\{1245\},\{1346\},\{1356\},\{14\},\{15\},\{34\},\{35\}
\\
5&\{134\},\{135\},\{145\},\{345\}
\\
6&\{1345\}.
\end{array}$$
Then $d_0$ in the spectral sequence associated with the filtration $F$ is
given by all contributions of the differential which raise $e$-degree by $2$.
This can actually be determined by cancellation, since sets $K_I$, $K_J$
are bijective when the equivalence relations $\sim_I$, $\sim_J$ on $\{u_1,u_2,w\}$
coincide
where $\sim_I$ is the equivalence relation of being in the same connected component
of the forest with edges $\{e_i\;|\;i\in I\}$. To simplify notation, let us 
write $I$ instead of $K_I$.

\begin{lemma}
\label{lr3cancel}
Assume the field $\tilde{K}$ either satisfies the assumption \rref{cass} for the link projection 
$\mathcal{D}^\prime$, or is the fraction field of the quotient of the polynomial ring
$K[A,B,C,D,E,F]$ by the ideal
\beg{er3ideal}{(AD-CE).
}
Then 

\begin{enumerate}

\item
\label{er3cancel1}
$d_0$ maps isomorphically the free $\tilde{K}$-module on 
$$\{12\},\{23\},\{24\},\{25\}$$
of $e$-degree $2$ to the free $\tilde{K}$-module on 
$$\{14\},\{15\},\{34\},\{35\}$$
of $e$-degree $4$. 

\item With respect to $d_0$, the free $\tilde{K}$-modules on
$$\{1246\},\{1256\},\{1236\},$$
in $e$-degree $2$,
$$\{1235\},\{1245\},\{1346\},\{1356\}$$
in $e$-degree $4$, and 
$$\{1345\}$$
of $e$-degree $6$ form a short exact sequence. 

\item
\label{er3cancel2}
$d_0$ maps the free $\tilde{K}$-module on
$$\{126\},\{236\},\{246\},\{256\}$$
of $e$-degree $1$ isomorphically to the free $\tilde{K}$-module on
$$\{156\},\{356\},\{146\},\{346\}.$$

\item
$d_0$ maps the free $\tilde{K}$-module
on 
$$\{123\},\{124\},\{235\},\{245\}$$
in degree $3$
isomorphically to the free $\tilde{K}$-module on
$$\{135\}, \{134\}, \{345\}, \{145\}.$$
\end{enumerate}
\end{lemma}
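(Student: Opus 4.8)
The plan is to compute $d_0$ explicitly on each of the four blocks and check that the indicated linear maps are isomorphisms (or short exact sequences). Recall that $d_0$ consists of exactly those summands of $\Psi$ which raise the $e$-degree by $2$, i.e. those in which the edge $e$ of height $0$ being removed and the edge $f$ of height $1$ being added are both among $e_1,\dots,e_6$; by the height list $h(e_1),\dots,h(e_6)=1,0,1,1,1,0$, the height-$0$ special edges are $e_2$ and $e_6$, and the height-$1$ special edges are $e_1,e_3,e_4,e_5$. So each arrow in $d_0$ corresponds to removing $e_2$ or $e_6$ from a spanning forest on $\{u_1,u_2,v_1,v_2,w\}$ and adding one of $e_1,e_3,e_4,e_5$, subject to the result remaining a forest with the same number of components (equivalently, inducing the same partition on the ``outside'' vertices $\{u_1,u_2,w\}$). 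This combinatorial bookkeeping produces precisely the $e$-degree table already displayed, and it is a finite check — using the edge incidences $e_1=(u_1,v_1)$, $e_2=(v_1,v_2)$, $e_3=(v_1,w)$, $e_4=(w,v_2)$, $e_5=(v_2,u_2)$, $e_6=(w,u_2)$ — to see which $K_I$ map to which $K_J$.

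First I would treat parts (3) and (4), which are the ``rigid'' cases: in each, the relevant forests contain a single special height-$0$ edge ($e_6$ in part (3), $e_2$ in part (4)) together with a fixed configuration of the height-$1$ edges, and removing that edge and re-attaching forces a \emph{unique} choice of replacement height-$1$ edge. Hence $d_0$ restricted to these blocks is a diagonal (or at most triangular) map whose diagonal entries are single coefficients of the form $\frac{1}{1+\alpha}+\frac{1}{1+\beta}$ from \rref{epsip}; one checks each such entry is nonzero using that the relevant $\alpha,\beta$ are honest products of distinct independent variables $A,\dots,F$ (and of $K$-variables), which cannot equal $1$ — here the hypothesis that either \rref{cass} holds or we work over the fraction field of $K[A,\dots,F]/(AD-CE)$ is exactly what guarantees nonvanishing of denominators and of the coefficients. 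For part (1), the same analysis applies but now both $e_2$ and $e_6$ are absent from the source forests while one of them is present after the move — wait; more precisely the sets $\{12\},\{23\},\{24\},\{25\}$ contain $e_2$, and $d_0$ removes $e_2$ and adds one of $e_1,e_3,e_4,e_5$, landing in $\{14\},\{15\},\{34\},\{35\}$; I would write out the $4\times 4$ coefficient matrix and show it is invertible by exhibiting it as triangular after reordering, or by a direct determinant computation in characteristic $2$.

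The main obstacle is part (2), the short-exact-sequence claim for the blocks $\{1246\},\{1256\},\{1236\}\to\{1235\},\{1245\},\{1346\},\{1356\}\to\{1345\}$, since here $d_0$ is genuinely a two-step complex ($3\to 4\to 1$ dimensions) and one must verify exactness in the middle rather than just invertibility of a square matrix. For this I would compute the two matrices of $d_0$ explicitly in terms of $A,\dots,F$ (these forests all contain $e_1$, which forces orientations and simplifies the circuit labels), check that the composite is zero directly — this is where the relation $AD=CE$ is used, and it is presumably \emph{forced} by the geometry when $\tilde K$ satisfies \rref{cass}, namely the product of the four special weights around the natural $4$-cycle $v_1\,v_2\,w$ relates $A\cdot D$ to $C\cdot E$ — and then verify that the rank of the first map is $3$ and the kernel of the second is $3$, so that $\operatorname{im}=\ker$. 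The key identity needed is the relation among the labels forced by \rref{er3ideal}; once that is in hand, exactness is a rank computation over the field $\tilde K$, which since the entries are explicit rational functions reduces to checking that a single $3\times 3$ minor and a single $1\times 1$ entry are nonzero, again using algebraic independence of the remaining variables.
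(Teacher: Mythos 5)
There is a genuine gap: you have misidentified which parts of the lemma are "rigid" and which require real coefficient computations, and this error runs opposite to what actually happens. You claim parts (3) and (4) are the easy cases, in which removing the unique special height-$0$ edge and re-attaching ``forces a \emph{unique} choice of replacement height-$1$ edge,'' so that the matrix of $d_0$ is diagonal or triangular. For part (3) this is false. Take the source $\{126\}$: removing $e_2$ and adding a height-$1$ edge produces \emph{two} valid targets, $\{146\}$ and $\{156\}$ ($\{136\}$ is not a spanning tree because $v_2$ is isolated). Similarly $\{236\}\to\{346\},\{356\}$; $\{246\}\to\{146\},\{346\}$; $\{256\}\to\{156\},\{356\}$. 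The resulting $4\times4$ incidence pattern is an $8$-cycle, and its $0$-$1$ adjacency matrix is \emph{singular} over $\mathbb{F}_2$ (rows $\{126\}+\{236\}$ and $\{246\}+\{256\}$ both equal $(1,1,1,1)$). Exactly the same pattern occurs for part (1). So neither part (1) nor part (3) follows from the shape of the $d_0$-graph or from any triangularity argument; one must compute the coefficients $\frac{1}{1+\alpha}+\frac{1}{1+\beta}$ honestly and check a genuine determinant-type expression, which is what the paper does (with a SAGE verification).

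The paper's structure is therefore the reverse of yours: parts (2) and (4) are the ones that follow from the shape of the $d_0$-graph (for part (4), the two $\sim$-compatible $2\times2$ blocks are genuinely triangular, e.g. $\{124\}$ maps only to $\{134\}$ because $\{145\}$ has the wrong outer partition), while parts (1) and (3) are the "hard" ones and turn out to be the \emph{same} computation. Relatedly, you place the use of the relation $AD=CE$ in part (2), reasoning that it should be where exactness of the three-term complex is verified. In fact the paper invokes the relation (in the form $v_1=v_2$ on circuit variables) precisely in the nonvanishing check for the parts (1)/(3) determinant, ensuring the result holds in the degenerate field $\operatorname{Frac}(K[A,\dots,F]/(AD-CE))$ as well as in the generic one. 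Your proposed outline would stall at part (3) when the promised triangularity fails to materialize, and the part (2) analysis would be misdirected toward deriving $AD=CE$ from the geometry rather than recognizing it as a specifically-permitted degeneration under which the parts (1)/(3) determinant is still shown to be nonzero.
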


\Proof
As it turns out, all of the claims with the exception of \rref{er3cancel1} and 
\rref{er3cancel2} follow from the shape of the graph whose vertices are
the generators and edges are pairs for which $d_0$ has a non-zero coefficient.
In the case of \rref{er3cancel1} and \rref{er3cancel2}, the coefficients
themselves must be considered in order to prove
that one gets an isomorphism. This can be done by direct computation,
which turns out to be the same in the case of either \rref{er3cancel1} or \rref{er3cancel2}.
Consider the graph $\Gamma$ depicted in Figure 12.

\begin{figure}
\label{fedges}
\begin{tikzpicture}
\draw[dashed][line width=2pt] (0,1) -- (0,-1);
\draw[line width=2pt] (0,1) -- (2,0);
\draw[line width=2pt] (2,0) -- (0,-1);
\draw[line width=2pt] (0,-1) -- (0,-2);
\draw[dotted] (0,-2) -- (2,0);
\draw[dotted] (0,2) --(2,0);
\draw[line width=2pt] (0,1) -- (0,2);

\draw (1,1) node[anchor=east] {$a$};
\draw (1,0) node[anchor=east] {$b$};
\draw (1,-1) node[anchor=east] {$c$};

\draw (0,2) node[anchor=south] {$u_1$};
\draw (0,1) node[anchor=east] {$v_1$};
\draw (0,-1) node[anchor=east] {$v_2$};
\draw (2,0) node[anchor=west] {$w$};
\draw(0,-2) node[anchor=north] {$u_2$};

\filldraw[black] 
(0,2) circle (2pt)
(0,1) circle (2pt)
(0,-1) circle (2pt)
(2,0) circle (2pt)
(0,-2) circle (2pt);

\end{tikzpicture}
\caption{The graph $\Gamma$}
\end{figure}
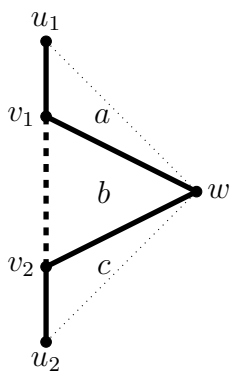

In the case of \rref{er3cancel1}, the thin dotted lines are paths in a tree $T$ to which
we are applying $d_0$. In the case of
\rref{er3cancel2}, the thin dotted line between $u_1$ and $w$ is a path in $T$, the thin dotted
line between $u_2$ and $w$ is the edge $e_6$. 

Let us consider the case of \rref{er3cancel1} (since the other case is the same). The differential
$d_0(T)$ with $ET\smallsetminus \{e_1,...,e_6\}$ fixed
can be expressed as BOS differential in $\Gamma$. Then submatrix of the differential
we are interested in with a given $ET\smallsetminus \{e_1,...,e_6\}$
has columns $\{12\}$, $\{23\}$, $\{24\}$, $\{25\}$ and rows
$\{14\}$, $\{15\}$, $\{34\}$, $\{35\}$. The non-zero coefficients are at the entries
$(\{14\},\{12\})$, $(\{14\},\{24\})$, $(\{15\},\{12\})$,  $(\{15\},\{25\})$,
$(\{34\},\{23\})$, $(\{34\},\{24\})$, $(\{35\},\{23\})$, 
$(\{35\},\{25\})$. (The entire matrix is a block sum of such matrices.)
It follows that the differential has two terms, which, when expressed as BOS differential
coefficients for the graph $\Gamma$ with vertex base point $w$ and faces as labeled, is
$$\begin{array}{l}
\left(\fracd{1}{1+ab}+\fracd{1}{1+v_2}\right)\left(\fracd{1}{1+b^{-1}}+\fracd{1}{1+v_1}\right)\cdot\\[4ex]
\left(\fracd{1}{1+bc}+\fracd{1}{1+v_2}\right)\left(\fracd{1}{1+a^{-1}b^{-1}c^{-1}}+\fracd{1}{1+v_1}\right)-\\[4ex]
\left(\fracd{1}{1+abc}+\fracd{1}{1+v_2}\right)\left(\fracd{1}{1+a^{-1}b^{-1}}+\fracd{1}{1+v_1}\right)\cdot
\\[4ex]
\left(\fracd{1}{1+b}+\fracd{1}{1+v_2}\right)\left(\fracd{1}{1+b^{-1}c^{-1}}+\fracd{1}{1+y}\right).
\end{array}$$
Using, for example, SAGE, which is capable of computing in fields of rational functions over $\F_2$,
one easily checks that this is non-zero (note that the relation \rref{er3ideal} translates to $v_1=v_2$).
\qed

From Lemma \ref{lr3cancel}, we see that after $d_0$, the only summand left is 
\beg{espec3}{\{125\}
}
in $e$-degree $3$, which is the result we wanted, since adding the
edges $e_1,e_2,e_5$ gives a bijection
$$\kappa:K(\md)\r K_{\{125\}}.$$
Additionally, since the $E_1$-term is entirely in $e$-degree $3$, we have
$$2(p+q)=p+3,$$
which means that the $E_1$-term is on a line of slope $-1/2$, the only
possible differential is $d_2$, and the spectral sequence collapses to 
$E_3$. Thus, we have reduced
our statement to showing that ``a modification of the differential
on $C(\md)$'' has isomorphic cohomology.
Additionally, the modification can be computed using the method of
\rref{eapp12}.

\vspace{3mm}

\begin{lemma}
\label{lr3d2}
Let $\tilde{K}$ be as in the assumptions of Lemma \ref{lr3cancel}. Then, identifying a spanning tree of 
$B(\md^\prime)$ which contains the edges $e_1,e_2,e_5$  with the spanning tree of $B(\md)$ obtained
by contracting the edges $e_1,e_2,e_5$ to a point, the differential
$d_2$ of our spectral sequence is related to the differential in $C(\md)$ by a conjugation followed
by an automorphism of fields.
\end{lemma}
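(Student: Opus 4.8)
The plan is to mimic, as closely as possible, the strategy already used in the Reidemeister 2 case (Proposition \ref{pr2} and Lemmas \ref{lxyzt}, \ref{lxyzt1}, \ref{pxyzt}), but now starting from the reduction that the First spectral sequence has already provided: after the $d_0$-cancellation of Lemma \ref{lr3cancel}, the $E_1$-term is the free $\tilde K$-module on $K_{\{125\}}\cong K(\md)$ concentrated in $e$-degree $3$, and the only surviving differential is $d_2$. The point of \rref{eapp12} (which applies verbatim here) is that $d_2$ can be computed as the ``perturbed'' differential $d + i\eta^{-1}j$ coming from a two-step filtration, exactly as in diagram \rref{euqu}. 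So the first step is to set up that two-step picture: I would reorganize $C(\md^\prime)$ (or rather the complex that survives the $d_0$-cancellation) according to a decreasing filtration $G^\bullet$ analogous to \rref{r2e2}, with $U_0$, $Q$, $U_2$ the associated graded pieces, $Q$ generated (after identification) by $K_{\{125\}}\cong K(\md)$, and $\eta$ the connecting homotopy \rref{ethetah}. As in \rref{efeta}, one checks that $\eta$ is multiplication by a nonzero scalar on each basis element — this uses only the fact that the relevant coefficient is a ratio $\tfrac{1}{1+(\text{circuit})}+\tfrac{1}{1+(\text{circuit})}$ with algebraically (or generically) independent edge variables among $A,\dots,F$ — so the hypothesis \rref{espec1} of Lemma \ref{ltheta} holds, and by Lemma \ref{ltheta} the complex is quasiisomorphic to $(Q,d+i\eta^{-1}j)[1]$.

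**Next**, exactly as in Section \ref{sr2}, I would exhibit an auxiliary projection — the analogue of $\mathcal{E}$ and $\mathcal{E}^\prime$ there — obtained by a skein-type modification of $\md$ together with the braid move, so that the "other" differential $d^{\prime\prime}$ (the one decorating the lowest associated-graded piece after applying Lemma \ref{ltheta} to the auxiliary complex) is literally the unperturbed BOS differential $d_Q$ on $C(\md)$ up to a change of variables. Concretely, I would filter the auxiliary complex by the height contributions of its own "new" edges, arriving at a diagram of the shape \rref{equq}, with $\xi$ again a nonzero scalar on basis elements, and then — as in \rref{edij} — write $\xi d\xi^{-1}=d^{\prime\prime}+i^\prime j^\prime\xi^{-1}$. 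The substance of the proof is then the algebraic matching: choose the new edge-variables so that the right-hand side of \rref{edij} coincides with $d + i\eta^{-1}j = $ \rref{edeform}, and simultaneously $d^{\prime\prime}=d_Q$. This reduces, just as \rref{ejetaxip} did, to identities of the form \rref{eakl} of Lemma \ref{lakl} together with \rref{eklsign}; one solves the resulting system for the edge-variables of the auxiliary complex in terms of a single parameter (the analogue of $x=t^2$, $y=t^{-1}$, $z=t^{-2}$), doing the "non-trivial" and "trivial" cases (black circuit through the new vertices or not) separately.

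**Finally**, since the explicit substitution will generally introduce algebraic relations among the new edge-variables (so $\tilde K$ itself need not satisfy \rref{cass}), I would — as in Lemma \ref{lxyzt1} and Proposition \ref{pxyzt} — note that the coefficient computations were really identities in a fixed rational function field $\Z/2(b,v,w,t,\dots)$ in the circuit-variables, and then adjoin one further generic variable $s$ and rescale the black-circuit labels multiplicatively by powers of $s$ (the operation $b\mapsto b^\prime$ of Proposition \ref{pxyzt}) to produce a common overfield $F^{\prime\prime}$ with embeddings carrying \rref{edeform} and $\xi^{-1}d_Q\xi$ to the same differential. Combined with the flat-base-change remark that $\rank_E H^i(C\otimes_F E)=\rank_F H^i(C)$, this yields that $d_2$ on $E_1\cong C(\md)$ is $C(\md)$'s own BOS differential conjugated by $\eta^{-1}j$ and then transported by a field automorphism, which is precisely the assertion.

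**The main obstacle** I expect is bookkeeping rather than conceptual: the braid $b^{-1}a^{-1}baba^{-1}$ contributes six new edges $e_1,\dots,e_6$, and the $d_0$-cancellation has already been used to collapse a $\{126\},\dots$-through-$\{1345\}$ lattice down to $\{125\}$, so the "new variables" relevant to the surviving $d_2$ are a derived combination of $A,\dots,F$ rather than the $e_i$'s themselves; pinning down exactly which circuits in $B(\md^\prime)$ correspond to which in the auxiliary picture, and hence writing the analogue of the substitution \rref{esolution}, will require care. The technical heart — verifying the matching of \rref{edeform} with the right-hand side of \rref{edij} — should again come down, as in Lemma \ref{lxyzt}, to the single M\"obius-type identity \rref{eakl} of Lemma \ref{lakl} applied to the appropriate circuit-variables, possibly invoked twice, and I would lean on a symbolic computation over $\F_2$-rational function fields (as was done for Lemma \ref{lr3cancel}) to confirm the final identity.
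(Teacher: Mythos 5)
Your overall plan — reduce via the First spectral sequence, then recognize the surviving $d_2$ as a perturbation of the type analyzed in Section \ref{sr2}, then fix up the field by adjoining a generic variable — is the right outline, and the final algebraic identity you expect to need (Lemma \ref{lakl}) is indeed the one used. But the organizational step in your first paragraph does not work, and the paper explicitly rules it out before stating the lemma: you cannot "reorganize the complex that survives the $d_0$-cancellation according to a decreasing filtration $G^\bullet$ analogous to \rref{r2e2} with pieces $U_0,Q,U_2$." After the $d_0$-cancellation of Lemma \ref{lr3cancel} the $E_1$-term lives in a \emph{single} $e$-degree ($e=3$), so there is nothing left to filter; and if instead you try to filter the uncancelled $C(\md^\prime)$ by the $e_i$-height contributions, the associated graded is nontrivial in five degrees, which is exactly what the paper says is too complicated for the \rref{euqu} picture and is the reason one resorts to the First spectral sequence in the first place.

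What the paper actually does is compute $d_2$ directly from the standard spectral-sequence correction recipe \rref{eformd2}: pick a $d_0$-cocycle $t$ (a tree in $K_{\{125\}}$ plus a multiple of a tree in $K_{\{146\}}$), apply the $e$-degree-raising-by-$1$ piece $\nu$ to get $\nu(t)=d_0(s)$, and read off $d_2(t)=\nu(s)$ together with the $e$-degree-preserving part of $dt$. Imposing the relation $AD=CE$ (which must then be removed at the end) makes the contributions of the two $e$-degree-$2$ counterterms $T_{12}$, $T_{25}$ coincide, and the resulting formula \rref{eaddon} visibly splits into \emph{two} summands, each of the same shape as $i\eta^{-1}j$ in \rref{edeform}, corresponding to two separate virtual R2 moves: one on the black edges $e_2,e_5$ (affecting face variables only) and one, by black--white duality, on $e_4,e_6$ (affecting vertex variables only). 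Only then is the Section \ref{sr2} machinery (Lemma \ref{lxyzt} and Proposition \ref{pxyzt}) invoked — twice — and one still has to argue that the two resulting conjugation-plus-change-of-variables operations commute (they do, because one touches face variables and the other vertex variables). Your proposal misses both the need to compute $d_2$ by the correction formula rather than by a filtration diagram, and the two-fold (black/white) structure of the resulting perturbation; these are not bookkeeping but the substance of the argument. Your last paragraph (adjoining a generic $s$ to kill the algebraic relation) does match what the paper does, but in the paper that step is specifically needed to remove the temporarily imposed relation $AD=CE$, which you never introduce.
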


\vspace{3mm}

Note that this implies Proposition \ref{pbraid} immediately.

\vspace{3mm}
\noindent
{\bf Proof of Lemma \ref{lr3d2}:} 
The general recipe for calculating $d_2$ is to take a $d_0$-cocycle $t$,
and add to the component of $dt$ in the same $e$-degree the following term:
apply to $t$ the component $\nu$ of the differential which raises $e$-degree
by $1$. Then, by assumption, $\nu(t)\in \operatorname{Im} d_0$, say, $\nu(t)=d_0(s)$. Then we have
\beg{eformd2}{d_2(t)=\nu(s).}
Now $u_i$ and $w$ are in the same component
of $T\smallsetminus \{e_2\}$ for precisely one value $i=1,2$. Assume $i=1$ (the case $i=2$ is treated analogously).
Then, a $d_0$-cocycle is represented by a tree $T\in K_{\{125\}}$ plus a
a multiple of a tree in $K_{\{146\}}$; one sees that for an appropriate
choice of the multiple, this is indeed a $d_0$-cocycle; on the other hand, by
Lemma \ref{lr3cancel}, any $d_0$-cohomology class may be represented as a linear
combination of elements of this form.

Applying $\nu$ gives three summands
\beg{e15}{\sum_\alpha\left(\frac{1}{1+r_\alpha BDF}+\frac{1}{1+b_\alpha ABE}\right)T_{15}^{\alpha},
}
\beg{e1235}{\sum_{\beta}\left(\frac{1}{1+b_\beta AC}+\frac{1}{1+q_\beta C^{-1}DF}\right)T_{1235}^{\beta},
}
\beg{e1245}{\sum_\beta\left(\frac{1}{1+b_\beta ABD^{-1}}+\frac{1}{1+q_\beta C^{-1}DF}\right)T_{1245}^{\beta}.}
Here $T_I$ (with a superscript as needed) indicates a tree in $K_I$, $b_\beta\in \tilde{K}$ is the element associated to the
black circuit arising from $T$ in the graph obtained by identifying all the
edges $e_i$ to a single point and $q_\beta\in \tilde{K}$ is an element assigned according
to the above convention to a white circuit corresponding to the connected
component containing the vertex $w$ and $r_\alpha\in \tilde{K}$ is the element assigned
to a white cycle containing $v_2$ (according to the right hand rule).

As remarked above, in general, we do not know that $T$ is a $d_0$-cocycle. In fact, it may support a $d_0$
which lands in $K_{145}$. This may in turn be canceled by adding an element of $K_{146}$
with the appropriate coefficient (see Figure 13, where $d_0$'s are denoted by 
a single arrow and $\nu$'s by double arrows). 

\begin{figure}
\begin{tikzpicture}[line width = 1.5pt]
\draw (0,2) -- (0,1);
\draw[dashed] (0,1) -- (0,0);
\draw (0,0) -- (0,-1);

\filldraw
(0,2) circle (2pt)
(0,1) circle (2pt)
(0,0) circle (2pt)
(0,-1) circle (2pt)
(1,.5) circle (2pt);

\draw[->] (0,-1.75) --(0,-3.25);

\draw (0,-5) -- (0,-4);
\draw (0,-6) -- (1,-5.5);
\draw (0,-7) -- (0,-6);

\filldraw
(0,-4) circle (2pt)
(0,-5) circle (2pt)
(0,-6) circle (2pt)
(0,-7) circle (2pt)
(1,-5.5) circle (2pt);

\draw (0,-11) -- (0,-10);
\draw (0,-12) -- (1,-11.5);
\draw[dashed] (0,-13) -- (1,-11.5);

\draw[->] (0,-9.25) --(0,-7.75);

\draw[->] (3.5,-10.25) -- (3.5,-11.75);
\draw[->] (3.5,-5.75) -- (3.5,-4.25);

\filldraw
(0,-10) circle (2pt)
(0,-11) circle (2pt)
(0,-12) circle (2pt)
(0,-13) circle (2pt)
(1,-11.5) circle (2pt);

\draw[double][->] (1.5,1.5) -- (2.5,2.5);
\draw[double][->] (1.5,-.5) -- (2.5,-1.5);

\draw[double][->] (1.5,-12.5) -- (2.5,-13.5);

\draw (3.5,4.5) -- (3.5,3.5);
\draw[dashed] (3.5,3.5) -- (3.5,2.5);
\draw (3.5,2.5) -- (3.5,1.5);
\draw (3.5,3.5) -- (4.5,3);

\filldraw
(3.5,4.5) circle (2pt)
(3.5,3.5) circle (2pt)
(3.5,2.5) circle (2pt)
(3.5,1.5) circle (2pt)
(4.5,3) circle (2pt);

\draw (3.5,-.5) -- (3.5,-1.5);
\draw[dashed] (3.5,-1.5) -- (3.5,-2.5);
\draw (3.5,-2.5) -- (3.5,-3.5);
\draw (3.5,-2.5) -- (4.5,-2);

\filldraw
(3.5,-.5) circle (2pt)
(3.5,-1.5) circle (2pt)
(3.5,-2.5) circle (2pt)
(3.5,-3.5) circle (2pt)
(4.5,-2) circle (2pt);

\draw (3.5,-12.5) -- (3.5,-13.5);
\draw[dashed] (3.5,-15.5) -- (4.5,-14);
\draw (3.5,-13.5) -- (4.5,-14);
\draw (3.5,-14.5) -- (4.5,-14);

\filldraw
(3.5,-6.5) circle (2pt)
(3.5,-7.5) circle (2pt)
(3.5,-8.5) circle (2pt)
(3.5,-9.5) circle (2pt)
(4.5,-8) circle (2pt);

\draw (3.5,-6.5) -- (3.5,-7.5);
\draw[dashed] (3.5,-9.5) -- (4.5,-8);
\draw[dashed] (3.5,-7.5) -- (3.5,-8.5);
\draw (3.5,-8.5) -- (4.5,-8);

\filldraw
(3.5,-12.5) circle (2pt)
(3.5,-13.5) circle (2pt)
(3.5,-14.5) circle (2pt)
(3.5,-15.5) circle (2pt)
(4.5,-14) circle (2pt);

\draw[<-] (5,2.5) -- (6,1.5);
\draw[<-] (5,-1.5) -- (6,-.5);

\draw (6.75,2) -- (6.75,1);
\draw[dashed] (6.75,1) -- (6.75,0);
\draw (6.75,0) -- (6.75,-1);
\draw[dashed] (6.75,-1) -- (7.75,.5);

\filldraw
(6.75,2) circle (2pt)
(6.75,1) circle (2pt)
(6.75,0) circle (2pt)
(6.75,-1) circle (2pt)
(7.75,.5) circle (2pt);

\draw[double][->] (8.5,.5) -- (10,.5);

\draw (10.75,2) -- (10.75,1);
\draw[dashed] (10.75,1) -- (10.75,0);
\draw (10.75,0) -- (10.75,-1);

\filldraw 
(10.75,2) circle (2pt)
(10.75,1) circle (2pt)
(10.75,0) circle (2pt)
(10.75,-1) circle (2pt)
(11.75,.5) circle (2pt);

\end{tikzpicture}

\caption{}
\end{figure}

We are really only interested in the top row of Figure 13. 
Let $T_{1256}^{\beta}$ be obtained from $T_{1235}^{\beta}$ by replacing $e_3$ with $e_6$
(or equivalently from $T_{1245}^{\beta}$ by replacing $e_4$ with $e_6$).
Then
\beg{e1256}{\begin{array}{l}
\displaystyle
d_0T_{1256}^{\beta}=\left(\frac{1}{1+C^{-1}BEF^{-1}}+\frac{1}{1+q_\beta 
C^{-1}DF}\right)T_{1235}^{\beta}+\\[4ex]
\displaystyle
\left(\frac{1}{1+DEF^{-1}}+\frac{1}{1+q_\beta C^{-1}DF}\right)T_{1245}^{\beta}.
\end{array}
}
(The coefficients on the right hand side may not match those of $\nu(T)$, but
this can be remedied using the contribution of the parallelogram on the bottom left of Figure 13.)
Now there exist trees $T_{125}^{\gamma}$ and  elements $c_\gamma\in \tilde{K}$ associated to black
circuits under the above convention such that 
\beg{e1256t}{\nu(T_{1256}^{\beta})=\sum_\gamma\left(
\frac{1}{1+b_\beta c_\gamma ABEF^{-1}}+\frac{1}{1+q_\gamma C^{-1}DF}\right)T_{125}^{\gamma}
+\text{other terms}.
}

\vspace{3mm}
(Here and below, ``other terms'' means linear combinations of elements in other sets $K_I$
which do not contribute to the differential.)
Next, let $T_{12}^{\alpha}$ resp. $T_{25}^{\alpha}$ be obtained from $T_{15}^{\alpha}$ by replacing $e_5$
with $e_2$ resp. $e_1$ with $e_2$. Then 
\beg{e12}{d_0(T_{12}^{\alpha})=
\left(\frac{1}{1+b_\alpha ABE}+\frac{1}{1+BDE^{-1}}\right)T_{15}^{\alpha}+\text{other terms},
}
\beg{e25}{d_0(T_{25}^{\alpha})=
\left(\frac{1}{1+b_\alpha ABE}+\frac{1}{1+BCA^{-1}}\right)T_{15}^{\alpha}+\text{other terms}.}
The other terms do occur and will make it necessary to add counterterms of
$e$-degree $2$ in $K_J$ for where the cardinality of $J$ is $2$.
However, it is easy to see that $\{12\}$, $\{25\}$ are the only
terms which, after applying $\nu$, can produce a non-zero multiple of a tree
in $K_{\{125\}}$. 

More specifically, there exist additional elements
$p_\delta\in \tilde{K}$ assigned to a white  tree $T_{125}^{\prime\delta}$ such that
\beg{e12t5}{\nu(T_{12}^{\alpha})=\sum_\delta
\left(\frac{1}{1+b_\alpha ABE}+\frac{1}{1+r_\alpha p_\delta EF}\right)T^{\prime\delta}_{125}+\text{other terms},
}
\beg{e25t1}{\nu(T_{25}^{\alpha})=\sum_\delta
\left(\frac{1}{1+b_\alpha ABE}+\frac{1}{1+r_\alpha p_\delta AC^{-1}DF}\right)T^{\prime\delta}_{125}+
\text{other terms}.
}
Again, the other terms are trees not in $K_{\{125\}}$ which cancel by Lemma \ref{lr3cancel}.
Thus, if we impose the additional relation
\beg{eeq2}{AD=CE,
}
the contributions of the trees $T_{12}$ and $T_{25}$ to $d_2$ will be
equal, and it suffices to consider one of them. (See Figure 14.)

\begin{figure}
\begin{tikzpicture}[line width = 1.5pt]

\draw (0,2) -- (0,1);
\draw[dashed]  (0,1) -- (0,0);
\draw (0,0) -- (0,-1);

\filldraw[black]
(0,2) circle (2pt)
(0,1) circle (2pt)
(0,0) circle (2pt)
(0,-1) circle (2pt)
(1,.5) circle (2pt);

\draw[double][->](1.75,.5) -- (3.25,.5);

\draw (4,2) -- (4,1);
\draw (4,0) -- (4,-1);

\filldraw[black]
(4,2) circle (2pt)
(4,1) circle (2pt)
(4,0) circle (2pt)
(4,-1) circle (2pt)
(5,.5) circle (2pt);

\draw[->] (6.5,2.5) -- (5.5,1.5);

\draw (7.5,4.5) -- (7.5,3.5);
\draw[dashed] (7.5,3.5) -- (7.5,2.5);

\filldraw[black]
(7.5,4.5) circle (2pt)
(7.5,3.5) circle (2pt)
(7.5,2.5) circle (2pt)
(7.5,1.5) circle (2pt)
(8.5,3) circle (2pt);

\draw[->] (6.5,-1.5) -- (5.5,-.5);

\draw[dashed] (7.5,-1.5) -- (7.5,-2.5);
\draw (7.5,-2.5) -- (7.5,-3.5);

\filldraw[black]
(7.5,-.5) circle (2pt)
(7.5,-1.5) circle (2pt)
(7.5,-2.5) circle (2pt)
(7.5,-3.5) circle (2pt)
(8.5,-2) circle (2pt);

\draw[double][->] (9,2.5) -- (10,1.5);
\draw[double][->] (9,-1.5) -- (10,-.5);

\draw (10.75,2) -- (10.75,1);
\draw[dashed] (10.75,1) -- (10.75,0);
\draw (10.75,0) -- (10.75,-1);

\filldraw
(10.75,2) circle (2pt)
(10.75,1) circle (2pt)
(10.75,0) circle (2pt)
(10.75,-1) circle (2pt)
(11.75,.5) circle (2pt);

\end{tikzpicture}
\caption{}
\end{figure}

Thus, to summarize, assuming \rref{eeq2}, $d_2 T$ is obtained
by adding to the component of $dT$ in $K_{\{125\}}$ the term
\beg{eaddon}{
\begin{array}{l}
\displaystyle
\sum\left(\frac{1}{1+b_\alpha ABE}+\frac{1}{1+r_\alpha p_\delta EF}\right)\left(\frac{1}{1+b_\alpha ABE}+
\frac{1}{1+BDE^{-1}}
\right)^{-1}\cdot\\[4ex]
\displaystyle\left(\frac{1}{1+r_\alpha BDF}+\frac{1}{1+b_\alpha ABE}\right)T_{125}^{\prime\delta}+\\[4ex]
\displaystyle\sum 
\left(\frac{1}{1+b_\beta c_\gamma ABEF^{-1}}+\frac{1}{q_\beta C^{-1}DF}\right)
\left(\frac{1}{1+C^{-1}BEF^{-1}}+\frac{1}{q_\beta C^{-1}DF}\right)^{-1}\cdot\\[4ex]
\displaystyle
\left(\frac{1}{1+b_\beta AC}+\frac{1}{1+q_\beta C^{-1}DF}\right)T_{125}^{\gamma}.
\end{array}
}
Therefore, 
each of the summands of \rref{eaddon} is of the same form 
as the summand $i\eta^{-1}j$ in \rref{edeform} for appropriate graphs, making 
R2 moves on the edges $e_2$, $e_5$ and $e_4$, $e_6$. The graphs concerned
consist of the edges depicted in Figure 14 resp. in the top two rows of Figure 13, and 
the edges of $B(\md^\prime)\smallsetminus \{e_1,\dots,e_6\}$.
Note that the argument used in Section \ref{sr2} to prove that the differential \rref{edeform}
obtained by an R2 move has the same BOS cohomology as the original differential
depends only on the maps $i,\eta, j$ being coefficients of the corresponding parts of
a BOS differential, which is replicated in the present case. 
The case of the edges $e_4$ and
$e_6$ is related to the case discussed in Section \ref{sr2} by black-white duality. (Note that
in the case of $e_4$, $e_6$, the edge $e_5$ does not move by $d_2$, and hence
can be ignored; we may simply contract $e_5$ from the graph, multiplying the adjacent
face variables by $E$ ofr $E^{-1}$, depending on orientation.) 
Lemma \ref{lxyzt} and the discussion following it therefore imply
the part of the statement of Lemma \ref{lr3d2} which assumes the relation \rref{eeq2}.
(Note that the two summands result in two operations each consisting of
a conjugation and a change of variables; one operation, however, only concerns 
black circuits and hence face variables, and the other only concerns white circuits
and hence vertex variables. Therefore they commute.)

It remains to discuss the removal of the relation \rref{eeq2}. To this end, however, looking
at the second graph from the left in Figure 14, we see that it has three connected
components, and therefore the equality of differentials we proved 
involves two independent white circuit variables. Multiplying one of them by a new
variable algebraically independent from the rest destroys the relation \rref{eeq2}, and
renders the variables $A,B,C,D,E,F$ jointly algebraically independent over $K$.
\qed

\vspace{3mm}
\noindent
{\bf Proof of Theorem \ref{t1p}:} We have shown that, subject to
the condition \rref{lass}, the numbers \rref{erank}
are invariant under the three Reidemeister moves. However,
there are still some minor details left
to finish proving the Theorem: When $L$ is a knot, we are claiming that
\rref{erank} is independent of orientation. This is simply because the number
$n_-$ does not depend on orientation in this case. 

When, on the other hand, $L$ is a link which has a projection violating the
condition \rref{lass}, we claim that \rref{erank} is $0$ (and therefore
also a link invariant even in this case). To this end, it suffices to prove
that \rref{erank} is $0$ for a projection $\md$ which will become disconnected
by a single reversed R2 move. 

In such a case, however, there always exist two vertices $v_1,v_2$ of
$B(\md)$ connected by two edges $e,f$ of heights $0$, $1$ respectively, 
where $v_1$ and $v_2$ are in different connected components of
$B(\md)\smin\{e,f\}$. In this case, however, $C(\md)$ has the form
of the chain complex $\overline{U}$ of Lemma \ref{ltheta} where 
$U_{2}^{\prime}$ resp. $U_2$ is generated by spanning trees containing
the edge $e$ (resp. $f$). Thus, $C(\md)$ is acyclic.
\qed

\vspace{5mm}

\section{A few computations}
\label{scomp}

The purpose of this Section is to give a few examples of computations of
BOS cohomology (which we will 
denote by $H_{BOS}$ here), to give a basic idea of its behavior. BOS appears
to be a sparse invariant, close in flavor to twisted $\widehat{HF}$. There
is, (see \cite{os}), a single-graded
spectral sequence $E_r$ (i.e. graded like the Bockstein specral sequence)
whose $E_3$-term is $H_{BOS}$,
converging to twisted $\widehat{HF}$. This spectral sequence is sparse 
in the sense that
the only possible non-zero differentials are of the form $d_{4k+2}$ (\cite{os}).
One may ask if this spectral sequence always collapses to $E_3$. This is unknown at
present, but even if this is the case, BOS cohomology contains additional
information due to the grading, which is intrinsically different from
gradings on (twisted) $\widehat{HF}$, which are given by $spin^c$-structures.

\vspace{3mm}
Let $\sigma(L)$
denote the signature of a link $L$.  We call a link $L$ {\em BOS thin} if
$$\rank(H^{i}_{BOS}(L))=\left\{\begin{array}{ll} \det(L)&\text{when $i=\sigma(L)/2$}\\
0 & \text{else.}
\end{array}\right.$$
The simplest computation of BOS cohomology is the following

\vspace{3mm}

\begin{proposition}
\label{palt}
Every alternating link $L$ is BOS thin.
\end{proposition}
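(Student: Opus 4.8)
The plan is to compute $H_{BOS}(L)$ directly from a reduced alternating projection $\md$ and show that the differential $\Psi$ vanishes entirely, so that $H_{BOS}$ is just the free module on all Kauffman states, whose rank is $\det(L)$ (the number of spanning trees of $B(\md)$), placed in the single degree $\sigma(L)/2$. The key structural fact about alternating diagrams is that, with the standard checkerboard coloring, one may choose the crossing conventions so that every black edge has the same height $h(e)=h_0\in\{0,1\}$ — equivalently, at every crossing the black graph behaves uniformly. Concretely, in a reduced alternating diagram the overstrand/understrand pattern is globally consistent relative to the coloring, so that after fixing the height convention of Figure 2 either all black edges have height $0$ or all have height $1$. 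I would first prove this combinatorial statement by following a strand around the diagram and using the alternating condition together with \rref{etauh}.

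Granting that all black edges of $\md$ have the same height, the degree formula \rref{etreeh}, namely $h(T)=\sum_{e\in E(T)}h(e)+\sum_{e\notin E(T)}(1-h(e))$, shows that $h(T)$ is the \emph{same} for every spanning tree $T$: if all heights equal $h_0$, then $h(T)=h_0\cdot(\#E(T))+(1-h_0)\cdot(\#E(B(\md))-\#E(T))$, and $\#E(T)$ is the number of black vertices minus $1$, which is independent of $T$. Hence all Kauffman states sit in a single cohomological degree $d=\tfrac12(h-n_-)$. Next, the differential $\Psi$ is nonzero only between states $T,T'$ with $E(T')=(E(T)\smin\{e\})\cup\{f\}$ where $h(e)=0$ and $h(f)=1$; but if every black edge has height $h_0$, no such pair exists, so $\Psi=0$ identically. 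Therefore $H^i(C(\md),\Psi)=C^i(\md)$, which is free on $K(\md)$ concentrated in the one degree $d$, and $\#K(\md)=$ (number of spanning trees of $B(\md)$) $=\det(L)$ for an alternating link by the classical Kauffman/Thistlethwaite computation of the determinant via the checkerboard graph.

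It remains to identify the degree $d$ with $\sigma(L)/2$. Here I would invoke the classical formula for the signature of an alternating link in terms of the checkerboard graph and the writhe (Gordon--Litherland / Traldi), combined with bookkeeping of $n_+$, $n_-$, the common height $h_0$, and the edge count; in the uniform-height situation the number $d=\tfrac12(h-n_-)$ collapses to an expression in $n_\pm$ and $\#V(B(\md))$ that matches the known alternating-signature formula. The main obstacle I expect is precisely this last sign/normalization reconciliation: one has to be careful that the paper's (admittedly arbitrary but fixed) conventions for positive/negative crossings and for edge heights are aligned so that the arithmetic lands on $\sigma(L)/2$ rather than some other affine function of $\sigma(L)$; the purely homological part (vanishing of $\Psi$ and the rank being $\det(L)$) is easy, but pinning the grading requires matching conventions with the standard literature on alternating links. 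Everything else reduces to the combinatorial lemma that an alternating diagram has all black edges of a single height, which is a short strand-tracing argument.
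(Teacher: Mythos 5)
Your argument is essentially the same as the paper's: choose an alternating projection with a checkerboard coloring so that all black edges share one height, note the complex is then concentrated in a single degree with $\Psi=0$, count spanning trees via the determinant, and identify that degree with $\sigma(L)/2$ by a known signature formula for alternating diagrams (the paper cites Traczyk's combinatorial formula, computing $i=(b-1-n_-)/2$ where $b$ is the number of black vertices). Your additional remark that the uniform-height claim needs a short strand-tracing argument, and your flagging of the convention-matching in the grading step, are accurate refinements of what the paper states more tersely, but the route is the same.
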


\vspace{3mm}
\Proof
Choose an alternating projection $\mathcal{D}(L)$ and a checkerboard 
coloring so that all black edges have height $1$. Then clearly $\mc(\mathcal{D}(L))$
is concentrated in a single dimension $i$, and the number of spanning trees
is equal to $\det(L)$ by Kirkhoff's theorem. To calculate $i$, let $b$ be the number of black vertices. Then
$$i=(b-1-n_{-})/2.$$
This number is equal to $\sigma(L)/2$ by \cite{tra} (see also \cite{daslow} for
more results on that subject). \qed

\vspace{3mm}
In \cite{os2}, Ozsv\'{a}th and Szab\'{o} define a class $\mathcal{A}$
of {\em quasi-alternating links} which is the smallest class containing the
unknot such that if a link $L$ has resolutions $L_0$ and $L_1$ 
at a particular crossing in some projection such that $L_0, L_1\in\mathcal{A}$, and
\beg{edet2}{\det(L_0)+\det(L_1)=\det(L),
}
then $L\in\mathcal{A}$. For our purposes, it is useful to extend this notion further.
Let a class $\widetilde{\mathcal{A}}$ of {\em weakly quasi-alternating
links} be defined the same way as $\mathcal{A}$, except that we replace, in
the above definition, ``the
unknot'' by ``the unknot and all split links''. It is proved in \cite{os2} that all non-split 
alternating links are quasi-alternating. It follows immediately that all 
alternating links are weakly quasi-alternating.

The statement of Proposition \ref{palt} extends to weakly quasi-alternating
links. We state this separately, since the argument involves a much
deeper step due to Manolescu and Ozsv\'{a}th \cite{mo}.  Let us start with the following

\vspace{3mm}

\begin{lemma}
\label{lskein}
We have a long exact sequence
\beg{elskein}{\begin{array}{l}
\dots\r H^{i+n_-(\md_1)/2-1/2}(L_1)\r
H^{i+n_-(\md)/2}(L)\r\\
\r H^{i+n_-(\md_0)/2}(L_0)\r H^{i+n_-(\md_1)/2+1/2}(L_1)\r\dots
\end{array}
}
\end{lemma}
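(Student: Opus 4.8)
The plan is to establish the long exact sequence \rref{elskein} by exhibiting, for a projection $\md$ of $L$ with a chosen crossing, a short exact sequence of cochain complexes relating $C(\md)$, $C(\md_0)$, and $C(\md_1)$, where $\md_0$, $\md_1$ are the two resolutions of $\md$ at that crossing. First I would fix a black edge $e$ of $B(\md)$ passing through the chosen crossing; without loss of generality (by black-white symmetry, Proposition \ref{ppsym}) assume $e$ corresponds to the edge present in the black graph, so that deleting $e$ gives $B(\md_0)$ and contracting $e$ gives $B(\md_1)$ (or vice versa, depending on orientation conventions and the height of $e$). The spanning trees of $B(\md)$ split into those containing $e$ and those not containing $e$: the former are in canonical bijection with spanning trees of $B(\md)/e = B(\md_1)$ and the latter with spanning trees of $B(\md)\smin e = B(\md_0)$ (this is the standard deletion-contraction decomposition). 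This gives a short exact sequence $0\to C(\md_0)\to C(\md)\to C(\md_1)\to 0$ of graded $F$-modules, provided one checks that $\Psi$ is ``triangular'' with respect to this decomposition in the appropriate direction --- i.e. that $\Psi$ restricted to the subcomplex spanned by trees containing $e$ lands back in that subcomplex (or the complementary statement), which follows because the differential $\Psi$ only swaps an edge of height $0$ for an edge of height $1$ and hence cannot create the edge $e$ from nothing when $e$ has the wrong height, or more precisely because one of $\md_0$, $\md_1$ sits as a subcomplex.

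The key technical point is the identification of the induced differential on the sub- and quotient complexes with the BOS differentials of $C(\md_0)$ and $C(\md_1)$ respectively, possibly after adjusting the field variables. Here I would argue exactly as in the identification of $E_1$-terms in Sections \ref{sr2} and \ref{sr3}: contracting (resp. deleting) the edge $e$ identifies faces and vertices of $B(\md)$ with those of $B(\md_1)$ (resp. $B(\md_0)$), the circuits $c$ and cocycle data defining $A(T,T')$, $\alpha(T,T')$, $B(T,T')$, $\beta(T,T')$ restrict compatibly, and the algebraic-independence hypothesis \rref{cass} for $\md$ restricts to (or generically specializes to) the corresponding hypothesis for $\md_0$ and $\md_1$. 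By the Remark following Proposition \ref{elll} and the flatness/rank-invariance statements there (and by Proposition \ref{generic} if a specialization of variables is needed), the ranks $H^i$ of the sub- and quotient complexes agree with the invariantly-defined $H^i(L_0)$, $H^i(L_1)$.

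The main obstacle --- and the reason the statement is phrased with those particular degree shifts --- is bookkeeping the gradings correctly. Recall from \rref{edegree} that the cohomological degree is $d=\tfrac12(h-n_-)$, and $n_-$ changes when we resolve the crossing: resolving a crossing removes it, so $n_-(\md_0)$ and $n_-(\md_1)$ differ from $n_-(\md)$ by $0$ or $1$ depending on the sign of the crossing and which resolution is taken. Simultaneously, the height $h(T)$ of a spanning tree shifts under deletion/contraction of $e$ according to whether $e$ has height $0$ or $1$ (compare the height computations in the Reidemeister 1 and Reidemeister 2 discussions, e.g. the analysis around \rref{ee1r2}). I would carefully tabulate, in each of the possible cases (positive vs.\ negative crossing, $h(e)=0$ vs.\ $h(e)=1$, and which of $\md_0,\md_1$ is the sub- vs.\ quotient complex), the shift in $h$ and the shift in $n_-$, and verify that they combine to produce precisely the degree offsets $n_-(\md)/2$, $n_-(\md_0)/2$, $n_-(\md_1)/2\pm\tfrac12$ appearing in \rref{elskein}; the half-integer shifts on the $H^*(L_1)$ terms should emerge as the connecting homomorphism's degree $+1$ combined with these offsets. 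Once the short exact sequence of complexes is in place with the degrees correctly labelled, \rref{elskein} is simply its long exact sequence in cohomology.
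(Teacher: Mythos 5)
Your proposal is correct and is essentially the paper's own argument: the paper filters $C(\md)$ by the height contribution of the edge $e$ at the chosen crossing (which is $0$ or $1$), and this two-step filtration is exactly your short exact sequence of complexes (trees containing $e$ versus trees not containing $e$, i.e.\ the contraction/deletion split), with the triangularity of $\Psi$ being the statement that the filtration is preserved. The paper phrases the conclusion as ``the spectral sequence concentrated in filtration degrees $0$ and $1$,'' which degenerates to the long exact sequence you describe, with the grading offsets arising for precisely the reason you give.
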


\vspace{3mm}
\Proof
The long exact sequence \rref{elskein} is the spectral sequence concentrated
in filtration degrees $0$ and $1$ associated with the decreasing filtration
on $\mc(\md)$ where $F^\epsilon\mc(\md)$ is the set of linear combinations
of all those spanning trees $T$ where the edge $e$ (whether it belongs to $T$ or
not) contributes $\geq \epsilon$ to the height.
\qed

\vspace{3mm}
\noindent
{\bf Comment:} The reason for the $n_-/2$ summands in the degrees in \rref{elskein} is
that the grading of the spectral (=exact) sequence is specified by
height alone, and cannot include the number of negative crossings which
we subtracted in the grading of BOS cohomology, and thus must add back
on in the long exact sequence. (To see this, note that the $L_0$ and $L_1$
resolutions cannot both preserve the orientation of the link, and hence
the numbers of negative crossings can change unpredictably.) Note also
that as a result of this, the terms of the long exact sequence \rref{elskein}
are not (oriented or unoriented) link invariants. 

\vspace{3mm}

\begin{proposition}
\label{pqalt}
Every weakly quasi-alternating link is BOS thin.
\end{proposition}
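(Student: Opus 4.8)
The plan is to induct on the recursive structure of the class $\widetilde{\mathcal{A}}$ of weakly quasi-alternating links, using the long exact sequence of Lemma \ref{lskein} together with the signature computations of Manolescu and Ozsv\'ath \cite{mo}. Write $\widetilde{\mathcal{A}}=\bigcup_{n\geq 0}\widetilde{\mathcal{A}}_n$, where $\widetilde{\mathcal{A}}_0$ consists of the unknot and all split links and $\widetilde{\mathcal{A}}_{n+1}$ is obtained from $\widetilde{\mathcal{A}}_n$ by one application of the defining operation, and induct on $n$. For the base case: the unknot is alternating, hence BOS thin by Proposition \ref{palt}; a split link $L$ has $\rank H^i_{BOS}(L)=0$ for all $i$ by Theorem \ref{t1p} while $\det(L)=0$, so it is BOS thin as well.

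For the inductive step, suppose $L\in\widetilde{\mathcal{A}}_{n+1}$ is obtained from a crossing of some projection $\md$ with unoriented resolutions $L_0,L_1\in\widetilde{\mathcal{A}}_n$ for which $\det(L_0)+\det(L_1)=\det(L)$; by the inductive hypothesis $L_0$ and $L_1$ are BOS thin. Applying Lemma \ref{lskein} at this crossing produces the long exact sequence \rref{elskein} relating $H^*_{BOS}(L)$, $H^*_{BOS}(L_0)$ and $H^*_{BOS}(L_1)$. Since $H^*_{BOS}(L_0)$ is concentrated in the single degree $\sigma(L_0)/2$ and $H^*_{BOS}(L_1)$ in $\sigma(L_1)/2$, and since each group $H^k_{BOS}(L)$ sits in \rref{elskein} between one $L_1$-term and one $L_0$-term, exactness forces $H^*_{BOS}(L)$ to be supported in at most the two degrees
$$\tfrac{\sigma(L_0)}{2}+\tfrac12\bigl(n_-(\md)-n_-(\md_0)\bigr)\qquad\text{and}\qquad \tfrac{\sigma(L_1)}{2}+\tfrac12\bigl(n_-(\md)-n_-(\md_1)+1\bigr),$$
the $n_-$-shifts being exactly those appearing in \rref{elskein} (see the Comment following Lemma \ref{lskein}; the half-integer asymmetry between the two resolutions is built into that sequence).

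The crucial point is then that these two degrees coincide and both equal $\sigma(L)/2$. This is where one invokes \cite{mo}: for a suitable choice of the (possibly non-inherited) orientations of $L_0$ and $L_1$, the behavior of the link signature under the unoriented skein triple for (weakly) quasi-alternating links, together with the corresponding counts of negative crossings, pins down the shifts in \rref{elskein}; concretely this amounts to identities of the shape $\sigma(L)-\sigma(L_0)=n_-(\md)-n_-(\md_0)$ and $\sigma(L)-\sigma(L_1)=n_-(\md)-n_-(\md_1)\pm1$, with the labeling of $L_0,L_1$ and the sign dictated by the crossing. Granting these, every term of \rref{elskein} in any degree other than $\sigma(L)/2$ vanishes, and \rref{elskein} collapses to the short exact sequence
$$0\to H^{\sigma(L_1)/2}_{BOS}(L_1)\to H^{\sigma(L)/2}_{BOS}(L)\to H^{\sigma(L_0)/2}_{BOS}(L_0)\to 0.$$
Working over a field, ranks are additive across a short exact sequence, so $\rank H^{\sigma(L)/2}_{BOS}(L)=\det(L_1)+\det(L_0)=\det(L)$, completing the induction.

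I expect the main obstacle to be precisely the last paragraph: fixing the orientation conventions on $L_0$ and $L_1$ and extracting from Manolescu--Ozsv\'ath \cite{mo} the exact signature-and-crossing-number identities that force both contributions into the common degree $\sigma(L)/2$ — this is the ``much deeper step'' alluded to before Lemma \ref{lskein}. By contrast, the structural induction, the base cases, and the rank count in the short exact sequence are routine. A secondary bookkeeping point is to make sure the determinant is normalized consistently, so that \rref{edet2} holds on the nose for the triple at hand (which is part of the hypothesis defining $\widetilde{\mathcal{A}}$).
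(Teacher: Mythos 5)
Your proposal takes essentially the same approach as the paper: induct on the recursive definition of $\widetilde{\mathcal{A}}$, apply the skein long exact sequence of Lemma \ref{lskein}, and invoke the Manolescu--Ozsv\'ath signature identities (their Lemma~3) to force the support of $H^*_{BOS}(L)$ into the single degree $\sigma(L)/2$, whence rank additivity gives $\det(L)$. The paper compresses this into a one-line reference to the proof of Theorem~1 of \cite{mo} together with Lemma \ref{lskein}; you have simply unpacked that same scaffold.
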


\Proof
Completely analogous to the proof of Theorem 1 of Manolescu-Ozsv\'{a}th \cite{mo}:
If the $0$-resolutions $L_0$ and $L_1$ of a link $L$ are BOS-thin, have non-zero determinant
and 
$$\det(L)=\det(L_0)+\det(L_1),$$
then $L$ is BOS thin by Lemma \ref{lskein} and Lemma 3 of \cite{mo}.
\qed

\vspace{3mm}
\noindent
{\bf Comment:} The subtle fact that we can use `weakly quasi-alternating'
instead of `quasi-alternating' in Proposition \ref{pqalt} is actually interesting,
since Theorem 1 of \cite{mo} does not hold for weakly quasi-alternating links.
This is, therefore, a first application of BOS cohomology: it can be used
to prove that a link is not weakly quasi-alternating (see Corollary \ref{ct37}
below), which is a stronger statement than proving that a link is
not quasi-alternating.

\vspace{3mm}

\begin{lemma}
\label{lqalt}
If, for a link $L$, all the values of $i$ for which $H^{i}_{BOS}(L)\neq 0$
differ by integral multiples of $2$, then 
$$\sum_i \rank(H^{i}_{BOS}(L))=\rank(\underline{\widehat{HF}}(L))=\det(L).$$
(Here $(\underline{\widehat{HF}}$ denotes the twisted Heegaard-Floer homology with
coefficients in the Novikov ring, as considered in \cite{os}.)
\end{lemma}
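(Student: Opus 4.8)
The plan is to play the thinness hypothesis against the single-graded (Bockstein-type) spectral sequence of \cite{os} whose $E_3$-term is $H_{BOS}(L)$ and which converges to $\underline{\widehat{HF}}(L)$, and then to compute the common total rank by an Euler characteristic argument. I would organize the argument around two claims: (i) under the hypothesis this spectral sequence collapses at $E_3$, so that $\sum_i\rank H^i_{BOS}(L)=\rank_\Lambda\underline{\widehat{HF}}(L)$; and (ii) this number equals $\det(L)$.

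For (i): every differential $d_r$, $r\ge 3$, of this spectral sequence raises the BOS cohomological degree by exactly $1$ (the Bockstein-type feature — consistent with the fact that $\Psi=d_2$ itself raises the degree by $1$). Since $E_3=H_{BOS}(L)$ is by hypothesis supported in cohomological degrees lying in a single coset of $2\Z$, and a degree-one differential shifts out of that coset, $d_3=0$; by induction each $E_r$ with $r\ge 3$ is a subquotient of $E_3$, hence supported in the same coset of $2\Z$, so $d_r=0$ and $E_\infty=E_3=H_{BOS}(L)$. As $\Lambda$ is a field, the total $\Lambda$-rank of $\underline{\widehat{HF}}(L)$ equals that of $E_\infty$, giving $\sum_i\rank H^i_{BOS}(L)=\rank_\Lambda\underline{\widehat{HF}}(L)$.

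For (ii): the alternating sum of ranks $\chi$ — a well-defined integer up to an overall sign, because all cohomological degrees occurring in $C(\md)$ differ by integers and $\Psi$ shifts the degree by $1$ — is preserved on passing from $E_r$ to $E_{r+1}$ (the two image contributions cancel, as $d_r$ has odd degree), hence $\chi(H_{BOS}(L))=\chi(C(\md),\Psi)$. By \rref{etreeh} one has $h(T)=2\,\#\{e\in E(T):h(e)=1\}+c$ for a constant $c$ depending only on $\md$, so up to the overall sign $\chi(C(\md),\Psi)=\sum_T\prod_{e\in E(T)}\epsilon(e)$, the sum taken over spanning trees $T$ of $B(\md)$, where $\epsilon(e)=\pm 1$ according to whether $h(e)$ is $1$ or $0$. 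By the weighted Matrix--Tree theorem this signed spanning-tree sum is, up to sign, a cofactor of the Laplacian of $B(\md)$ with edge weights $\epsilon(e)$; once the height convention of \rref{etreeh} (Figure 2) is matched with the Goeritz crossing-type convention, that Laplacian is the Goeritz matrix of $\md$, a cofactor of which has absolute value $\det(L)$. Hence $|\chi(H_{BOS}(L))|=\det(L)$, and since the thinness hypothesis forces every term of $\chi(H_{BOS}(L))$ to carry the same sign, $\sum_i\rank H^i_{BOS}(L)=|\chi(H_{BOS}(L))|=\det(L)$; together with (i) this proves the lemma.

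The step I expect to be the main obstacle is the final identification in (ii) of the signed spanning-tree count with $\pm\det(L)$: one has to verify that the height convention of \rref{etreeh} coincides with the Goeritz crossing-type convention and then invoke the weighted Matrix--Tree theorem together with the classical equality $|\det(\text{reduced Goeritz matrix})|=\det(L)$ for a connected diagram satisfying \rref{lass}. Should that bookkeeping be awkward, two alternative routes are available: recognize $\chi(C(\md),\Psi)$ as the Kauffman-state sum for the Alexander polynomial evaluated at $t=-1$ — the basis of $C(\md)$ being the set of Kauffman states, cf. \cite{kauf}, \cite{os} — so that $\chi(C(\md),\Psi)=\pm\Delta_L(-1)=\pm\det(L)$; or simply quote from Heegaard Floer theory that $\chi\bigl(\underline{\widehat{HF}}(\Sigma(L);\Lambda)\bigr)=\pm|H_1(\Sigma(L);\Z)|=\pm\det(L)$. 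One should also double-check, for step (i), the Bockstein-type claim that every differential of the spectral sequence of \cite{os} raises the BOS grading by exactly $1$.
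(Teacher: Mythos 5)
Your step (i) has the gap you yourself flagged at the end, and it is a real one. The claim that every $d_r$ with $r\ge 3$ raises the BOS degree $\delta=(h-n_-)/2$ by exactly $1$ is not what the Baldwin--Ozsv\'{a}th--Szab\'{o} spectral sequence gives you: the filtration underlying that spectral sequence is by the cube degree, which for a Kauffman state is its height $h$, so $d_r$ raises $h$ by $r$ and hence $\delta$ by $r/2$. Only even $r$ can therefore contribute, and $d_r$ raises $\delta$ by $r/2$, which is odd exactly when $r\equiv 2\pmod 4$. Consequently the hypothesis of the lemma kills $d_6, d_{10},\dots$, but it does \emph{not} by itself kill $d_4, d_8,\dots$, which shift $\delta$ by even amounts. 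What is missing in your argument is precisely the sparsity statement of \cite{os} that the only possibly non-zero differentials are $d_{4k+2}$ --- the paper's proof leans on this, and you quoted it in your preamble but did not feed it into step (i). With it, $d_{4k}$ is ruled out a priori and $d_{4k+2}$, $k\ge 1$, is ruled out by the thinness hypothesis, giving the collapse at $E_3$; without it, the collapse does not follow from the degree argument alone.

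Step (ii) is correct, and gives a more self-contained route than the paper, which simply identifies $\chi(C(\md))$ with the Euler characteristic of the based Khovanov complex at $q=-1$ and quotes that this is $\det(L)$. Your signed spanning-tree / weighted Matrix--Tree reduction to the Goeritz determinant is a reasonable way to re-derive that fact (and the alternatives you mention --- the Kauffman state sum for $\Delta_L(-1)$, or $\chi(\underline{\widehat{HF}}(\Sigma(L)))=\pm|H_1(\Sigma(L))|$ --- would also do), and the final observation that thinness converts $|\chi|$ into the total rank is exactly what the paper does. So the structure of your proof matches the paper's; the only substantive issue is the unjustified degree-shift claim in step (i), which should be replaced by the $d_{4k+2}$ sparsity.
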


\vspace{3mm}
\Proof
Because of sparsity of the Baldwin-Ozsv\'{a}th-Szab\'{o} spectral sequence
(the only differentials being $d_{4k+2}$), the spectral sequence collapses
under the assumption. Further, in general,
the Euler characteristic of $\mc(\mathcal{D}(L))$
is
equal to the Euler characteristic of the based Khovanov complex at $q=-1$, which
is $\det(L)$. Under the given assumption, the Euler characteristic is
equal to the total rank.
\qed

\vspace{3mm}
In view of these observations, it is natural to ask if there exist a knot whose
BOS-cohomology is not concentrated in a single degree. Such knots do indeed
exist. 

\vspace{3mm}

\begin{proposition}
\label{pnontriv}
The torus knot $T(3,7)$ has non-trivial BOS cohomology in at least two degrees
whose difference is not an even integer. 
\end{proposition}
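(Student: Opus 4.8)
The statement is purely computational: we must exhibit a projection $\mathcal{D}$ of $T(3,7)$ satisfying \rref{lass}, build the complex $(C(\mathcal{D}),\Psi)$ over a suitable field of characteristic $2$, and compute the ranks of its cohomology in each degree, verifying that two of them are nonzero in degrees differing by an odd multiple of (half of) an even integer — i.e.\ that $H_{BOS}(T(3,7))$ is \emph{not} BOS thin. By the Remark following Proposition \ref{elll}, we may work over the field of rational functions $\mathbb{F}_2(u_f, z_v)$ in the face and vertex variables (all but one of each), since the rank is unchanged under field extension; equivalently, by Proposition \ref{elll}, over $\mathbb{F}_2(q_e)$ with the $q_e$ algebraically independent. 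The plan is: (1) fix a standard closed-$3$-braid projection of $T(3,7)$, namely the closure of $(\sigma_1\sigma_2)^7$, perturbed if necessary by a Reidemeister~1 or~2 move so that Assumption~\rref{lass} holds (every complementary region simply connected); (2) read off the black graph $B(\mathcal{D})$, its heights $h(e)\in\{0,1\}$, and the face/vertex variables; (3) enumerate the spanning trees of $B(\mathcal{D})$ (the Kauffman states), which are finite in number and organized by height, hence by cohomological degree via \rref{edegree}; (4) write out the differential $\Psi$ via \rref{epsip}, using the interpretation of $\alpha(T,T'), \beta(T,T')$ as products of face, resp.\ vertex, variables along the circuit/cut determined by the pair $(T,T')$; (5) compute the rank of $\Psi$ in each degree, e.g.\ by Gaussian elimination / the cancellation method recalled in the proof of Proposition \ref{generic}, over $\mathbb{F}_2(u_f,z_v)$.

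For the bookkeeping in step (3)–(5) it is cleanest to use the black graph of the standard $(\sigma_1\sigma_2)^7$ closure: with the usual checkerboard coloring one obtains a graph on a small number of vertices with $21$ edges (one per crossing), grouped so that the heights alternate in a predictable pattern along the braid. The determinant $\det(T(3,7)) = 1 + 1 + \cdots$ — concretely one computes $\det(T(3,7))$ from the Goeritz/black-graph matrix, and this equals $\sum_i \operatorname{rank} H^i$ only if the cohomology were concentrated appropriately; the point of the proposition is precisely that the Euler characteristic $\det(L)$ is \emph{smaller} (in absolute terms, after cancellation) than the total rank, forcing cancellation in the homological sense to fail to be ``thin,'' i.e.\ there must be homology in at least two degrees whose difference is not twice an even integer. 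In practice I expect the cleanest route is: compute $\dim C^i$ (number of spanning trees of each height) and $\operatorname{rank}(\Psi\colon C^i \to C^{i+1})$ directly; then $\operatorname{rank} H^i = \dim C^i - \operatorname{rank}\Psi|_{C^i} - \operatorname{rank}\Psi|_{C^{i-1}}$. If there is a nonzero $H^i$ in a ``wrong'' degree (not equal to $\sigma(L)/2$, noting $\sigma(T(3,7)) = -8$), we are done; and the parity claim follows because the heights of spanning trees, while differing by even integers within a fixed projection, produce degrees whose pairwise differences can be verified directly from the two heights that carry homology.

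The computation can be done by hand with care, but it is substantially streamlined by a symbolic algebra system capable of linear algebra over $\mathbb{F}_2(u_f,z_v)$ — SAGE, as already used in the proof of Lemma \ref{lr3cancel}. Concretely: build the matrix of $\Psi$ with entries the rational functions $\tfrac{1}{1+\alpha} + \tfrac{1}{1+\beta}$, row-reduce over the function field, extract ranks in each degree. One subtlety is to double-check Assumption~\rref{lass} for whichever projection is chosen: the standard minimal braid closure of $T(3,7)$ has complementary regions, one of which is an annular region around the braid axis that is \emph{not} simply connected, so a stabilizing Reidemeister~1 move (or passing to a slightly non-minimal diagram) is needed; by the invariance already proved (Reidemeister~1,~2,~3 moves, Sections \ref{sr2}, \ref{sr3}), this does not change the ranks, so the choice of diagram is harmless.

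\textbf{Main obstacle.} The genuine difficulty is not conceptual but combinatorial bulk: enumerating and organizing the spanning trees of the $21$-edge black graph of $T(3,7)$ by height, and then reliably computing the rank of a moderately large matrix of rational functions over $\mathbb{F}_2(u_f,z_v)$ without arithmetic error. A secondary, more delicate point is ensuring the \emph{weights/variables are generic enough} that the computed ranks are the true (maximal) ones — this is exactly where the algebraic independence hypothesis \rref{cass} (equivalently, working over the full rational function field, per the Remark after Proposition \ref{elll}) is essential, and one must be careful not to accidentally impose relations among the $u_f, z_v$ when setting up the matrix. Once the matrix is in hand the verification that $\operatorname{rank} H^i \neq 0$ for two values of $i$ with the stated parity is immediate.
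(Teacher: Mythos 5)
Your proposal takes a genuinely different route from the paper. You outline a direct, brute-force computation: fix a closed-braid projection of $T(3,7)$, enumerate the spanning trees of $B(\mathcal{D})$ by height, form the matrix of $\Psi$ over $\F_2(u_f,z_v)$, and row-reduce. The paper, by contrast, computes nothing about the chain complex of $T(3,7)$ directly. It observes that $\Sigma(T(3,7))$ is the Brieskorn homology sphere $\Sigma(2,3,7)$; being an integral homology sphere, twisting is trivial, so $\rank\,\underline{\widehat{HF}}(\Sigma(T(3,7))) = \rank\,\widehat{HF}(\Sigma(2,3,7);\Z/2) = 3$ by the $HF^+$ computation in \cite{os3}. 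Meanwhile $\det(T(3,7)) = 1$. If $H_{BOS}$ were supported in degrees all congruent mod $2$, Lemma \ref{lqalt} would force $\sum_i\rank H^i_{BOS} = \det(L) = 1$, which is incompatible with the spectral sequence abutting to a group of rank $3$. This is short, rigorous, and avoids computation entirely. Your approach is valid in principle, but as written it is a program rather than a proof: no ranks are actually computed, and the paper itself warns (Concluding remarks) that the direct calculation with algebraically independent variables is extremely heavy — even $8_{19}$ strained standard computer algebra, and the explicit BOS computation for $T(3,7)$ was only carried out later in \cite{ke}. Your passing remark about comparing the Euler characteristic $\det(L)$ with the total rank is in fact the germ of the paper's Lemma \ref{lqalt}; but to turn that into a proof you still need the independent lower bound $\rank\,\underline{\widehat{HF}} = 3$ coming from the Brieskorn-sphere identification, and you should record the key numerical inputs $\det(T(3,7))=1$ and $\rank\widehat{HF}(\Sigma(2,3,7))=3$, neither of which appears in your write-up. (Also, for the standard closure of $(\sigma_1\sigma_2)^7$, both generators appear, so Assumption \rref{lass} already holds; the stabilization you suggest is unnecessary.)
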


\Proof
The branched double cover $\Sigma(T(3,7))$
of $T(3,7)$
is the Brieskorn homology sphere with multiplicities $2,3,7$. Since this
is a homology $3$-sphere, twistings are homologically trivial
and hence $\underline{\widehat{HF}}(\Sigma(T(3,7))$
and $\widehat{HF}(\Sigma(T(3,7)),\Z/2)$ have the same rank. The latter group
is computed in \cite{os3}, p. 209: one has
$$HF^+(\Sigma(T(3,7)))=\Z[U,U^{-1}]/U\Z[U]\oplus \Z,$$
and hence 
$$\rank(\widehat{HF}(\Sigma(T(3,7)),\Z/2)=3$$
by the universal coefficient theorem. Hence, $H_{BOS}(\Sigma(T(3,7))$
cannot be concentrated in degrees which differ by even integers by Lemma \ref{lqalt}.

\qed

\vspace{3mm}
\begin{corollary}
\label{ct37}
The knot $T(3,7)$ is not weakly quasi-alternating. 
\end{corollary}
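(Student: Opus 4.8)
The plan is to derive this immediately from Proposition \ref{pqalt} together with Proposition \ref{pnontriv}, with essentially no new work. First I would recall the definitions: by Proposition \ref{pqalt} every weakly quasi-alternating link is BOS thin, and by definition a BOS thin link $L$ satisfies $\rank(H^{i}_{BOS}(L)) = \det(L)$ for $i = \sigma(L)/2$ and $\rank(H^{i}_{BOS}(L)) = 0$ otherwise. In particular, the BOS cohomology of a BOS thin link is supported in the single cohomological degree $i = \sigma(L)/2$; in particular all degrees $i$ with $H^{i}_{BOS}(L) \neq 0$ trivially differ by even integers.

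Next I would invoke Proposition \ref{pnontriv}, which asserts precisely that $T(3,7)$ has non-trivial BOS cohomology in at least two degrees whose difference is \emph{not} an even integer. In particular $H^{i}_{BOS}(T(3,7))$ is non-zero for more than one value of $i$, so it is certainly not concentrated in a single degree, and the non-zero degrees do not all differ by even integers.

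Combining these two observations gives the Corollary by contradiction: if $T(3,7)$ were weakly quasi-alternating, then by Proposition \ref{pqalt} it would be BOS thin, hence its BOS cohomology would be concentrated in the single degree $\sigma(T(3,7))/2$, contradicting Proposition \ref{pnontriv}. Therefore $T(3,7)$ is not weakly quasi-alternating.

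I do not expect any genuine obstacle here: all the substance has already been carried out earlier, namely the identification of $\Sigma(T(3,7))$ with the Brieskorn sphere $\Sigma(2,3,7)$, the computation $\rank(\widehat{HF}(\Sigma(T(3,7)),\Z/2)) = 3$ from \cite{os3}, the transfer of this rank statement to BOS cohomology via Lemma \ref{lqalt}, and the extension of BOS thinness from quasi-alternating to weakly quasi-alternating links (Proposition \ref{pqalt}) using the Manolescu--Ozsv\'{a}th argument. Granting those, the Corollary is a one-line deduction requiring no further estimates or case analysis; the only point worth flagging explicitly is that it is \emph{weakly} quasi-alternating — a strictly stronger conclusion than merely ``not quasi-alternating'' — that is being ruled out, which is possible precisely because Proposition \ref{pqalt} already covers the weak notion.
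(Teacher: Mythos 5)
Your proposal is correct and matches the paper's argument exactly: the paper's proof is simply ``Apply Proposition \ref{pqalt}'' (implicitly invoking Proposition \ref{pnontriv} to see that $T(3,7)$ is not BOS thin), which is precisely the contrapositive deduction you spell out.
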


\Proof
Apply proposition \ref{pqalt}.
\qed

\vspace{3mm}
\noindent
{\bf Concluding remarks:} Before finding out about $T(3,7)$, 
the authors considered the three
smallest non-alternating knots, $8_{19}$, $8_{20}$ and $8_{21}$ in Rolfsen's
table (see Figure 15)  in search of a non-trivial example.

\begin{figure}

\begin{tikzpicture}[line width=2pt]

\draw (-4,0) .. controls (-5,-1) and (-6.5,0) .. (-6,.8) .. controls (-5.5,1.5) .. (-5,2.5);

\draw (-4.75,2.9) .. controls (-4.25,3.5) and (-4,3.25) .. (-3.75,2.9) .. controls (-3.5,2.5) .. (-3.25,2);

\draw (-3,1.6) .. controls (-2,0) and ( -2,-1) .. (-3.5,0) .. controls (-5,1) .. (-4.5,2);

\draw (-4.25,2.25) .. controls (-4,2.5) .. (-3.75,2.6);

\draw (-3.4,2.6) .. controls (-3,2.5) and (-3,2)  .. (-3.25,1.575) .. controls (-3.5,1.25) and (-3.75,1) .. (-4.5,1.4);

\draw (-4.9,1.65) ..controls (-5,1.7) .. (-5.1,1.8);

\draw (-5.4,2.1) .. controls (-5.75,2.5) and (-5.25,2.9) .. (-5,2.79) .. controls (-4.8,2.7) and (-4.5,2.5) .. (-4,1.4);

\draw (-3.75,1) .. controls (-3.5,.6) and (-3.6,.4) .. (-3.67,.35);

\end{tikzpicture}

\begin{tikzpicture}[line width=2pt]

\draw (0,1.25*1.25*.1) .. controls (1.25*1.25*-.25,1.25*1.25*-.5) and (1.25*-1,1.25*-.5) .. (1.25*-1,0) .. controls (1.25*-1,1.25*.5) and (1.25*-.75,1.25*1.25) .. (1.25*-.25,1.25*1.25);

\draw (1.25*.25,1.25*1.2) .. controls (1.25*1,1.25*1) and (1.25*1.25,1.25*.75) .. (1.25*1.1,1.25*.5);

\draw (1.25*1,1.25*.25) .. controls (1.25*.5,1.25*-1) and (1.25*.25,1.25*-1) .. (1.25*-.5,1.25*-1.05) .. controls (1.25*-1,1.25*-1) and (1.25*-2.1,1.25*-1) .. (1.25*-2,1.25*.5) .. controls (1.25*-2,1.25*1) and (1.25*-1.5,1.25*1.3) .. (1.25*-1.45,1.25*1.25);

\draw (1.25*-1.15,1.25*1.23) .. controls (1.25*-1,1.25*1.2) .. (1.25*-.8,1.25*1.1);

\draw (1.25*-.5,1.25*.95) .. controls (1.25*-.25,1.25*.8) and (1.25*.25,1.25*.6) .. (1.25*1,1.25*.4) .. controls (1.25*1.35,1.25*.3) and (1.25*1.3,0) .. (1.25*1.15,1.25*-.05) .. controls (1.25*1.15,1.25*-.05) .. (1.25*1, 1.25*-.075);

\draw (1.25*.75,0) .. controls (1.25*.5,.25) and (0,1.25*.2) .. (1.25*-.75,1.25*.3);

\draw (1.25*-1.15,1.25*.45) .. controls (1.25*-1.3,1.25*.5) and (1.25*-1.35,1.25*1) .. (1.25*-1.3,1.25*1.25) .. controls (1.25*-1.2,1.25*1.8) and (1.25*-.25,1.25*1.8) .. (1.25*.04,1.25*1.3)  .. controls (1.25*.2,1.25*1) and (1.25*.2,1.25*.9) .. (1.25*.2,1.25*.8);

\draw (1.25*.15,1.25*.55) .. controls (1.25*.125,1.25*.5) and (1.25*.1,1.25*.4) .. (1.25*.075,1.25*.35);

\end{tikzpicture}

\begin{tikzpicture}[line width=2pt]

\draw (-.2,.25)  .. controls (.5,-.75) and (.5,-1.5) .. (.3,-2) .. controls (0,-2.75) and (-.5,-2.5) .. (-.75,-2.25);

\draw (-1,-2) .. controls (-1.25,-1) and (-1,-.5) .. (-.5,.25) .. controls (0,1)  and (.5,1) .. (1,0);

\draw (1.25,-.5) .. controls (1.5,-1.5) and (1.5,-2.5) .. (1,-3) .. controls (0,-4) and (-1,-3.5) .. (-1.5,-3) .. controls (-2.5,-2) and (-2,-1) .. (-1.5,-.575) .. controls (-1.35,-.45) .. (-1.15,-.4);

\draw (-.625,-.35) .. controls (-.375,-.325) .. (-.15,-.31);

\draw (.35,-.275) .. controls (.75,-.25) and (1,-.21) .. (1.25,-.225) .. controls (1.5,-.25) and (1.75,-.4) .. (1.45,-.625);

\draw (1.1,-.8) .. controls (1,-.9) and (.8,-.8) .. (.6,-1);

\draw (.25,-1.275) .. controls (.15,-1.3) and (0,-1.75) .. (-.75,-2.1) .. controls (-1.5,-2.5) and (-1.75,-1.75) .. (-1.825,-1.35);

\draw (-2,-.75) .. controls (-2.25,0) and (-1.75,.5) .. (-1.25,.75) .. controls (-1,.9) and (-.75,.8) .. (-.6,.625);

\end{tikzpicture}
\caption{The knots $8_{19}$, $8_{20}$ and $8_{21}$ in Rolfsen's table.}
\end{figure}
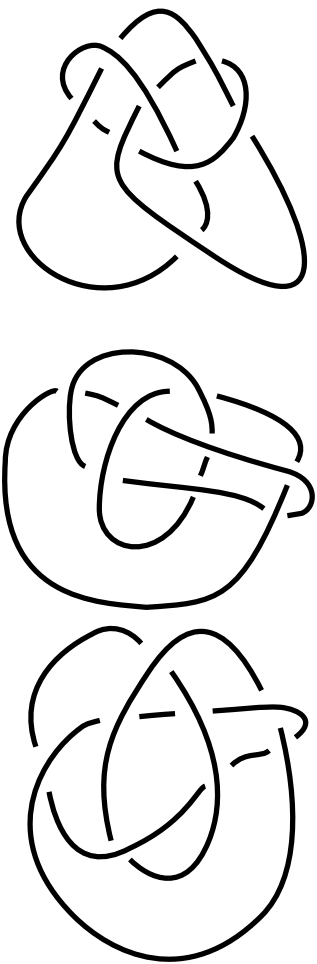

From that point of view, however, these knots prove
to be disappointing. While one can compute the BOS cohomology of the knots $8_{20}$, $8_{21}$
directly from the definition, they are also known to be quasi-alternating 
(Manolescu \cite{man}), so they are BOS thin. 

The knot $8_{19}$ is not quasi-alternating (since it is not Khovanov thin - see \cite{dbar}),
but nevertheless is BOS thin. This is because 
$8_{19}$ has a pretzel projection, with black graph depicted in Figure 16. An algorithm for
computing Heegaard-Floer homology of branched double covers of pretzel knots is given
in \cite{os3}, and it is known that the $\widehat{HF}$ of $8_{19}$ is $(\Z/2)^3$
(while the determinant of $8_{19}$ is $3$).
On the other hand, the BOS complex of the black graph in Figure 16 is non-trivial
only in two adjacent degrees. Therefore, there is no room for differentials or extensions
in the BOS twisted spectral sequence, and we conclude that $8_{19}$ is also BOS thin. 
It remains an open problem whether or not $8_{19}$ is weakly quasi-alternating. 

We also computed the BOS cohomology
of $8_{19}$ from the definition. The full computation with algebraically independent variables
is beyond the range of Maple and Mathematica on computers with 16GB memory. We introduced,
however, an algebraic dependency between the variables (making them all powers of a single
transcendental variable), and got cohomology concentrated in a single degree equal to half
the signature. By Proposition \ref{generic}, we were therefore able to conclude that
$8_{19}$ is BOS thin. Later we discovered that MAGMA (and SAGE) are somewhat more efficient
at computing in fields of rational functions over $\F_2$, and were able to also compute a basis
of the BOS cohomology of $8_{19}$ in algebraically independent variables. The result
is several megabytes long. 

While the present paper was under review, I.Kriz and E. Elmanto \cite{ke}
did compute compute explicitly examples of knots and links with non-trivial BOS cohomologies
(including a precise computation of the BOS cohomology of $T(3,7)$).
Computable examples were very hard to obtain, and the general computation
of BOS cohomology with algebraically independent variables appears unworkable for
larger knots at present. It is worth remarking, however, that there is much current interest in manifolds
whose $\widehat{HF}$ has rank equal to the number of elements of their first integral
homology (so called $L$-spaces). Branched double covers of BOS thin knots are examples
of $L$-spaces, and Proposition \ref{generic} does give a computationally efficient way
of detecting such examples.

\begin{figure}
\begin{tikzpicture}[line width = 1.5pt]
\draw (0,2) -- (2*0.877583,1);
\draw (2*0.877583,1) -- (2*0.877583,-1);
\draw (2*0.877583,-1) -- (0,-2);
\draw (0,-2) -- (-2*0.877583,-1);
\draw (-2*0.877583,-1) -- (-2*0.877583,1);
\draw (-2*0.877583,1) -- (0,2);
\draw[dashed] (0,2) -- (0,0);
\draw[dashed] (0,0) -- (0,-2);

\filldraw
(0,0) circle (2pt)
(0,2) circle (2pt)
(2*0.877583,1) circle (2pt)
(2*0.877583,-1) circle (2pt)
(0,-2) circle (2pt)
(-2*0.877583,-1) circle (2pt)
 (-2*0.877583,1) circle (2pt);

\end{tikzpicture}
\caption{The black graph of a pretzel projection of $8_{19}$}
\end{figure}
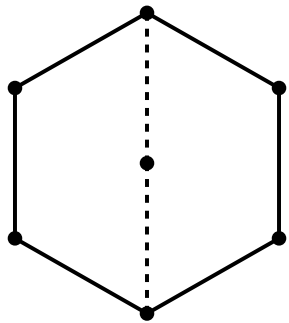

\vspace{10mm}

\end{document}